\numberwithin{equation}{subsection}
\theoremstyle{plain}
\newtheorem{thm}[subsection]{Theorem}
\newtheorem{prop}[subsection]{Proposition}
\newtheorem{lemma}[subsection]{Lemma}
\newtheorem{cor}[subsection]{Corollary}
\theoremstyle{definition}
\newtheorem{defn}[subsection]{Definition}
\theoremstyle{remark}
\newtheorem{rem}[subsection]{Remark}
\newtheorem{final remark}[subsection]{Final Remark}
\newcommand*\bigcdot{\mathpalette\bigcdot@{.65}}
\newcommand*\bigcdot@[2]{\mathbin{\vcenter{\hbox{\scalebox{#2}{$\m@th#1\bullet$}}}}}
\begin{document}
\title[A log-motivic cohomology for semistable varieties]{A log-motivic cohomology for semistable varieties and its $p$-adic deformation theory}
\author{Oliver Gregory and Andreas Langer}
\begin{abstract}
We construct log-motivic cohomology groups for semistable varieties and study the $p$-adic deformation theory of log-motivic cohomology classes. Our main result is the deformational part of a $p$-adic variational Hodge conjecture for varieties with semistable reduction: a rational log-motivic cohomology class in bidegree $(2n,n)$ lifts to a continuous pro-class if and only if its Hyodo-Kato class lies in the $n$-th step of the Hodge filtration. This generalises \cite[Theorem 8.5]{BEK14} which treats the good reduction case. In the case $n=1$ the lifting criterion is the one obtained by Yamashita for the logarithmic Picard group \cite[Theorem 3.1]{Yam11}. Along the way, we relate log-motivic cohomology to logarithmic Milnor $K$-theory and the logarithmic Hyodo-Kato Hodge-Witt sheaves.  
\end{abstract}
\address{25 Gordon Street, University College London, London, UK, WC1H 0AY}
\email {o.gregory@ucl.ac.uk}
\address{Harrison Building, University of Exeter, Exeter, EX4 4QF, Devon, UK}
\email{a.langer@exeter.ac.uk}
\date{December 10, 2025 \\ This research was supported by EPSRC grant EP/T005351/1}
\maketitle
\pagestyle{myheadings}

\section{Introduction}
In the present work we construct a variant $\mathbb{Z}_{\log}(n)$ of the motivic complexes  of Suslin-Voevodsky \cite{SV00a} suitable for semistable varieties. Our approach relies on a definition of finite correspondences due to Suslin-Voevodsky \cite[\S3]{SV00b} which also includes singular varieties (see also \cite[Appendix 1A]{MVW06} and \cite[\S8 and \S9]{CD19}). Then the complexes $\mathbb{Z}_{\log}(n)$ are defined analogously as simplicial sheaves associated to a certain sheaf with transfers and coincides with the usual motivic complexes on the smooth locus.

In the case $n=1$, in order to get a geometric interpretation as $\mathbb{Z}_{\log}(1)$, we will modify the given logarithmic structure $M$ on the semistable variety and define a log-structure $N$ which is trivial on the smooth locus of the variety. By considering the image $\underline{N}^{\mathrm{gp}}$ of $N^{\mathrm{gp}}$ under the structure morphism, we can then relate its first cohomology to the diagonal log-motivic cohomology. Then we define logarithmic Milnor $K$-groups by applying the Milnor functor to the group $\underline{N}^{\mathrm{gp}}$, and prove that the corresponding sheaf is the cohomology sheaf $\mathcal{H}^{n}(\mathbb{Z}_{\log}(n))$, in analogy to the smooth case which was proved by Kerz \cite{Ker09}. We also relate the modulo $p^{n}$ residue of the log-Milnor $K$-group to modified logarithmic Hyodo-Kato Hodge-Witt sheaves, making precise an old result of Hyodo \cite{Hyo88}. 

Let $k$ be a perfect field of characteristic $p>0$, and let $K=\mathrm{Frac}\,W(k)$. Let $X$ be a $W(k)$-scheme with semistable reduction, with special fibre $Y$ and generic fibre $X_K$. For each $m\in\mathbb{N}$, let $X_{m}$ be the reduction of $X$ modulo $p^{m}$, so $X_{1}=Y$. Our main motivation comes from the problem of constructing $K$-cohomology classes (or cycles) on $X_{K}$. One strategy is to attempt to lift classes from the special fibre (this strategy is especially appealing if the reduction $Y$ is highly degenerate and thus has an abundance of easily accessible cycles). In the second half of the paper we state and prove an analogue of the $p$-adic variational Hodge conjecture \cite{BEK14} for semistable varieties, which yields a lifting criterion for motivic cohomology classes in terms of their logarithmic Chern class in Hyodo-Kato cohomology. For $n=1$, the lifting criterion coincides with the lifting criterion of the logarithmic Picard group considered by Yamashita \cite{Yam11}. It general it uses a semistable version of the pro-complexes $\mathbb{Z}_{X_{\bigcdot}}(n)$ of Bloch-Esnault-Kerz by gluing the complexes $\mathbb{Z}_{\log,Y}(n)$ and the log-syntomic complex of Kato-Tsuji along the modified logarithmic Hyodo-Kato de Rham-Witt sheaf. We will also use a construction of the log-syntomic complex by Nekov\'{a}\v{r}-Nizio\l \ \cite{NN16}. The pro-complexes $\mathbb{Z}_{\log,X_{\bigcdot}}(n)$ enjoy some of the nice properties of the pro-complexes $\mathbb{Z}_{X_{\bigcdot}}(n)$ in the smooth case stated in \cite[\S7]{BEK14}. For example, the top cohomology sheaf is the log-Milnor $K$-group as pro-sheaf $\mathcal{K}_{\log,X_{\bigcdot},n}^{\mathrm{Mil}}$ and, at least rationally, it is an extension of the log-motivic complex $\mathbb{Z}_{\log,Y}(n)$ by a truncated de Rham complex. In contrast to the smooth case (\cite[Proposition 7.3]{BEK14}) we do not expect this property to hold integrally since an integral version of the Hyodo-Kato comparison is not known. 
 
For $n<p$, we denote by $\mathbb{H}^{2n}_{\mathrm{cont}}(Y,\mathbb{Z}_{\log,X_{\bigcdot}}(n))$ the continuous logarithmic Chow group of $X_{\bigcdot}$. Our main result, which is a generalisation of \cite[Theorem 8.5]{BEK14} to the case of semistable reduction, can then be formulated as follows:

\begin{thm}\label{theorem intro}(= Theorem \ref{main theorem})
Let $n<p$. Let $X$ be a proper flat scheme over $\mathrm{Spec}\,W(k)$ with semistable reduction. Let $z\in\mathbb{H}^{2n}(Y,\mathbb{Z}_{\log,Y}(n))\otimes\mathbb{Q}$. Then its log-crystalline Chern class $c_{\mathrm{HK}}(z)\in H^{n}(Y,W_{\bigcdot}\omega_{Y/k,\log}^{n})\otimes\mathbb{Q}\rightarrow H_{\mathrm{log-cris}}^{2n}(Y/W(k))_{\mathbb{Q}}\simeq H_{\mathrm{dR}}^{2n}(X/W(k))_{\mathbb{Q}}\simeq H_{\mathrm{dR}}^{2n}(X_{K}/K)$ lies in $\mathrm{Fil}^{n}H_{\mathrm{dR}}^{2n}(X_{K}/K)$ if and only if $z$ lifts to $\hat{z}\in\mathbb{H}_{\mathrm{cont}}^{2n}(Y,\mathbb{Z}_{\log,X_{\bigcdot}}(n))\otimes\mathbb{Q}$.
\end{thm}

Note that we construct a map $\mathbb{Z}_{\log,Y}(n)\rightarrow\mathcal{K}_{\log,Y,n}^{\mathrm{Mil}}[-n]$ which induces a homomorphism 
\begin{equation*}
\pi_{n}:\mathbb{H}^{2n}(Y,\mathbb{Z}_{\log,Y}(n))\rightarrow H_{\mathrm{Zar}}^{n}(Y,\mathcal{K}^{\mathrm{Mil}}_{\log,Y,n})\,.
\end{equation*}
Cast in terms of Milnor $K$-sheaves, Theorem \ref{theorem intro} tells us that if a rational log-Milnor $K$-cohomology class on $Y$ is in the image of $\pi_{n}\otimes\mathbb{Q}$ and is Hodge (its log-crystalline Chern class lies in $\mathrm{Fil}^{n}H_{\mathrm{dR}}^{2n}(X_{K}/K)$), then it at least ``formally'' lifts to an element of $\varprojlim_{m} H^{n}(X_{m},\mathcal{K}_{\log,X_{m},n}^{\mathrm{Mil}})\otimes\mathbb{Q}$. 
\begin{rem}
\
\begin{enumerate}[(i)]
\item In the good reduction case, the main theorem \cite[Theorem 1.3]{BEK14} is concerned with deforming classes of vector bundles and its proof has two parts. The first is \cite[Theorem 8.5]{BEK14} which concerns lifting algebraic cycle classes to the continuous Chow group -- our Theorem \ref{theorem intro} is a generalisation of this to the semistable reduction case. The second part is the Chern character comparison isomorphism \cite[Theorem 11.1]{BEK14} between continuous $K$-theory and continuous Chow groups; here a restriction on the dimension of the special fibre is needed. We do not give a semistable analogue of this second theorem here, but we consider it to be an interesting problem to investigate the relationship between our logarithmic Chow groups and a logarithmic incarnation of $K$-theory for log-smooth schemes.
\item We are aware that the assumptions of unramifiedness and on the dimension in \cite[Theorem 1.3]{BEK14} have been removed in the recent work \cite[Theorem D]{AMMN20}, which uses topological cyclic homology as a new suitable tool in $p$-adic deformation theory. They also prove in \cite[Theorem E]{AMMN20} a more general result on lifting classes in higher $K$-theory to continuous $K$-theory by using $p$-adic derived de Rham cohomology. The subject of this note is different in the sense that we lift log-motivic cohomology classes by considering their Chern classes in log-crystalline cohomology. 
\end{enumerate}
\end{rem}
   
Finally, let us point out that motives and motivic complexes have been constructed for singular varieties in a series of papers, notably by Kahn-Miyazaki-Saito-Yamazaki \cite{KMSY21a}, \cite{KMSY21b}, \cite{KMSY21c} and Binda-Park-\O stv\ae r \cite{BPO20}. In the ``Motives with modulus'' series, Kahn-Miyazaki-Saito-Yamazaki construct a triangulated tensor category of motives with modulus $\mathrm{\bf{MDM}}_{\mathrm{gm}}^{\mathrm{eff}}$ in the same way as Voevodsky constructed his category $DM_{gm}^{eff}$ in \cite{Voe00}, starting from the category $\mathrm{Cor}_{k}$ of smooth varieties with finite correspondences as morphisms. A motive with modulus is a pair $(M,M^{\infty})$ where $M$ is a $k$-variety and $M^{\infty}$ is an effective Cartier divisor on $M$ such that $M-M^{\infty}$ is smooth. The category $\mathrm{Cor}_{k}$ is replaced by $\mathrm{MCor}_{k}$ of finite correspondences between $M-M^{\infty}$ and $N-N^{\infty}$ (for two modulus pairs $(M,M^{\infty})$, $(N,N^{\infty})$) that satisfy a certain condition on the Cartier divisors. One of their main results is a characterisation of Bloch's higher Chow groups and Voevodsky's motivic cohomology in terms of a derived internal Hom between two motives with modulus in $\mathrm{\bf{MDM}}_{\mathrm{gm}}^{\mathrm{eff}}$. A crucial difference to the construction of Voevodksy is that $\mathbb{A}^{1}$-invariance is replaced by $\overline{\Box}$-invariance, where $\overline{\Box}=(\mathbb{P}^{1},\infty)$ is the motive with modulus where $\infty$ is the reduced divisor on $\mathbb{P}^{1}$ at $\infty$.  
The theory of Kahn-Miyazaki-Saito-Yamazaki is then extended and translated into the language of logarithmic geometry by Binda-Park-\O stv\ae r. In fact, they construct a triangulated tensor category $\mathrm{\bf{logDM}}^{\mathrm{eff}}(k)$ of effective log-motives starting from the category $lSM/k$ of fine and saturated (fs) log-schemes that are log-smooth over $\mathrm{Spec}\,k$ equipped with the trivial log-structure, and where the category $\mathrm{MCor}_{k}$ is replaces by the category $lCor/k$ of finite log-correspondences. Any fs log-scheme $X\in lSm/k$ gives rise to a log-motive $M(X)\in\mathrm{\bf{logDM}}^{\mathrm{eff}}(k)$. Their construction generalises Voevodsky's category of effective motives. For example, if $X$ and $Y$ are fs log-schemes in $lSm/k$ such that $X-\partial X$ and $Y-\partial Y$ are smooth subschemes, where the log-structure is trivial, then
\begin{equation*}
Hom_{\mathrm{\bf{logDM}}^{\mathrm{eff}}(k)}(M(Y)[i],M(X))\cong Hom_{DM^{eff}}(M(Y-\partial Y)[i],M(X-\partial X))\,. 
\end{equation*}
In both works, the main example is the motive associated to a toroidal embedding $j:U\hookrightarrow X$ of a smooth $k$-variety into a normal variety $X$, with $M$ the log-structure defined by $\mathcal{O}_{X}\cap j_{\ast}\mathcal{O}_{U}^{\ast}$. In the present paper, we consider the category $\mathrm{SemiStab}_{k}$ of semistable varieties. These are normal crossing divisors inside $W(k)$-schemes that are
log-syntomic, but not log-smooth, over $\mathrm{Spec}\,W(k)$ equipped with the trivial log-structure. In analogy to \cite{BPO20}, we define a category $\mathrm{SemiStabCor}_{k}$ with objects the semistable varieties and morphisms finite log-correspondences. This leads to the notion of sheaves with transfer and allows us to define the log-motivic complexes $\mathbb{Z}_{\log}(r)$ in an ad-hoc fashion using the simplicial approach \cite{SV00a}. We hope to construct, in a future project, a derived category $\mathscr{C}$ of effective log-motives such that a semistable variety $X$ gives rise to a log-motive $M(X)$ in $\mathscr{C}$, complimentary to the works of Binda-Park-\O stv\ae r and Kahn-Miyazaki-Saito-Yamazaki.
  
\subsection{Conventions}
All schemes are assumed to be separated and of finite type over the base.

\section{Log-motivic cohomology}\label{log-motivic cohomology section}

\subsection{Finite log-correspondences}\label{finite log section}
\ \par

For a morphism of fine log-schemes $f:(X,M_{X})\rightarrow(Y,M_Y)$, the strict locus of $f$ is the locus of points $x\in X$ such that $(f^{\ast}M_{Y})_{\overline{x}}\xrightarrow{\sim} M_{X,\overline{x}}$. We shall abusively write the strict locus of $f$ simply as $X^{\mathrm{str}}$ without reference to $f$, since the morphism will always be clear from the context (it will be the structure morphism). Note that $X^{\mathrm{str}}\subset X$ is open by \cite[Proposition 2.3.1]{Shi00}. If the base $Y$ has trivial log-structure then $X^{\mathrm{str}}$ coincides with the trivial locus of $X$, denoted by $X^{\mathrm{triv}}$.

Recall that if $(X,M_{X})\rightarrow (B,M_{B})$ and $(Y,M_{Y})\rightarrow (B,M_{B})$ are morphisms of fs (fine and saturated) log-schemes, then the log-structure on the fibre product $(X,M_{X})\times_{(B,M_{B})}(Y,M_{Y})$ taken in the category of log-schemes is coherent but not necessarily fs. Instead, we may take the fibre product in the category of fs log-schemes, which we denote by $(X,M_{X})\times^{\mathrm{fs}}_{(B,M_{B})}(Y,M_{Y})$. Note that the underlying scheme of $(X,M_{X})\times_{(B,M_{B})}(Y,M_{Y})$ is $X\times_{B}Y$, but this is not case for  $(X,M_{X})\times^{\mathrm{fs}}_{(B,M_{B})}(Y,M_{Y})$ in general. There is however a natural morphism
\begin{equation*}
(X,M_{X})\times^{\mathrm{fs}}_{(B,M_{B})}(Y,M_{Y})\rightarrow (X,M_{X})\times_{(B,M_{B})}(Y,M_{Y})
\end{equation*}
which is a finite morphism on the underlying schemes \cite[Remark 12.2.36(i)]{GR18}, and is an isomorphism  over the trivial locus $(X\times_{B}\times Y)^{\mathrm{triv}}$. 

Let $(\mathrm{Spec}\,k,L)$ be the standard log-point, i.e. $L$ is the log-structure on $\mathrm{Spec}\,k$ associated to $\mathbb{N}\rightarrow k, 1\mapsto 0$. We shall abusively write $\mathrm{Spec}\,k$ denote to denote the log-scheme whose underlying scheme is $\mathrm{Spec}\,k$, and the log-structure is the trivial log-structure.

\begin{defn}
An fs log-scheme $(X,M_{X})$ over $(\mathrm{Spec}\,k,L)$ is called a semistable variety if \'{e}tale locally on $X$ the structure morphism $(X,M_{X})\rightarrow(\mathrm{Spec}\,k,L)$ factors as
\begin{equation*}
(X,M_{X})\xrightarrow{u}(\mathrm{Spec}\,k[T_{1},\ldots,T_{a}]/(T_{1}\cdots T_{b}),P)\xrightarrow{\delta}(\mathrm{Spec}\,k,L)
\end{equation*}
for some $a\geq b$, where $P$ is the log-structure associated to $\mathbb{N}^{b}\rightarrow k[T_{1},\ldots,T_{a}]/(T_{1}\cdots T_{b})$, $e_{i}\mapsto T_{i}$, where $u$ is strict and \'{e}tale, and $\delta$ is the morphism induced by the diagonal. 
\end{defn}

\begin{defn}\label{alternative log-structure definition}
In the following our base field $k$ is equipped with the trivial log-structure. For a semistable variety $(X,M_{X})$ we will use the log-structure $M_{X}$ to define an alternative log-structure $N_{X}$ on $X$ which will be very important in this paper. Let $U=\mathrm{Spec}\,A\subset X$ be an affine and let the structure morphism $\alpha:M_{X}\rightarrow\mathcal{O}_{X}$ be locally defined on $U$ by the homomorphism of monoids $\mathbb{N}^{r}\rightarrow\mathcal{O}(U)=A$, $e_{i}\mapsto\pi_{i}$. We define a new log-structure $N_{X}$ locally on $U$ by the homomorphism 
\begin{equation*}
\beta:\mathbb{N}^{r}\rightarrow A, \ \ \ e_{i}\mapsto g_{i}:=\pi_{i}+\displaystyle\prod_{\stackrel{j=1}{j\neq i}}^{r}\pi_{j}\,.
\end{equation*}
It is easy to see that $g_{i}\in\mathcal{O}(U)\cap j_{\ast}\mathcal{O}(U^{\mathrm{sm}})^{\ast}$ where $j:U^{\mathrm{sm}}\hookrightarrow U$ is the open immersion of the smooth part. Then, evidently, $X^{\mathrm{triv}}=X^{\mathrm{sm}}$, and we get a homomorphism of sheaves of monoids 
\begin{equation*}
\beta:N_{X}\rightarrow\mathcal{O}_{X}\cap j_{\ast}\mathcal{O}_{X^{\mathrm{sm}}}^{\ast}
\end{equation*}
where $\mathcal{O}_{X}\cap j_{\ast}\mathcal{O}_{X^{\mathrm{sm}}}^{\ast}$ is considered as a sheaf of monoids with respect to multiplication. Let 
\begin{equation*}
\beta^{\mathrm{gp}}:N_{X}^{\mathrm{gp}}\rightarrow(\mathcal{O}_{X}\cap j_{\ast}\mathcal{O}_{X^{\mathrm{sm}}}^{\ast})^{\mathrm{gp}}
\end{equation*}
be the associated homomorphism of sheaves of abelian groups and let $\underline{N}_{X}=\mathrm{Im}(\beta)$ and $\underline{N}_{X}^{\mathrm{gp}}=\mathrm{Im}(\beta^{\mathrm{gp}})\subset(\mathcal{O}_{X}\cap j_{\ast}\mathcal{O}_{X^{\mathrm{sm}}}^{\ast})^{\mathrm{gp}}$. Write $\mathrm{SemiStab}_{k}$ for the category of semistable varieties equipped with the log-structure $N_{X}$. We will consider $(X,N_{X})$ as a log-scheme over $\mathrm{Spec}\,k$ (equipped with the trivial log-structure). Note that $\mathrm{SemiStab}_{k}$ consists of objects which are not log-smooth over $k$.
\end{defn}

We wish to enlarge $\mathrm{SemiStab}_{k}$ into an additive category $\mathrm{SemiStabCor}_{k}$ by including the notion of finite log-correspondence, analogously to the classical smooth setting of Suslin-Voevodsky \cite[\S1]{SV00a}, \cite[Lecture 1]{MVW06}. 

\begin{defn}
Let $(X,N_{X})$ be an object of $\mathrm{SemiStab}_{k}$ and let $(Y,M_{Y})$ be any fs log-scheme over $\mathrm{Spec}\,k$. A finite log-correspondence from $(X,N_{X})$ to $(Y,M_{Y})$ is a finite correspondence $Z\in\mathrm{Cor}(X,Y)$ (see \cite[\S3]{SV00b}, \cite[Appendix 1A]{MVW06} and \cite[\S8 and \S9]{CD19} for finite correspondences between possibly singular schemes), such that the restriction $Z_{X^{\mathrm{triv}}}$ of $Z$ to $X^{\mathrm{triv}}\times_{k} Y$ has support in $X^{\mathrm{triv}}\times_{k} Y^{\mathrm{triv}}$. The group of finite log-correspondences from $(X,N_{X})$ to $(Y,M_{Y})$ is denoted by $\mathrm{Cor}((X,N_{X}),(Y,M_{Y}))^{\ast}$, or simply $\mathrm{Cor}(X,Y)^{\ast}$ when the log-structures are clear from the context.
\end{defn}

For example, let $f:(X,N_{X})\rightarrow (Y,M_{Y})$ be a morphism of fs log-schemes over $\mathrm{Spec}\,k$ where $(X,N_{X})$ is an object of $\mathrm{SemiStab}_{k}$. Let $\Gamma_{f}$ be the graph of the underlying morphism $f:X\rightarrow Y$. Then $\Gamma_{f}\subset X\times_{k}Y$ is closed because $Y$ is separated over $\mathrm{Spec}\,k$. Moreover, the projection $\mathrm{pr}_{X}:\Gamma_{f}\rightarrow X$ is an isomorphism, so $\Gamma_{f}$ is a universally integral relative cycle by \cite[Theorem 1A.6 \& Theorem 1A.10]{MVW06}, and hence $\Gamma_{f}\in\mathrm{Cor}(X,Y)$. By \cite[III. Proposition 1.2.8]{Ogu18} and \cite[Proposition 2.3.1]{Shi00}, we have $X^{\mathrm{triv}}$ and $Y^{\mathrm{triv}}$ are open in $X$ and $Y$ and we have $f(X^{\mathrm{triv}})\subset Y^{\mathrm{triv}}$. Hence $\Gamma_{f}$ restricted to $X^{\mathrm{triv}}\times_{k}Y$ has support in $X^{\mathrm{triv}}\times_{k}Y^{\mathrm{triv}}$. 

\begin{rem}
The definition of finite log-correspondence makes sense in much greater generality, but we only ever use it for log-schemes in $\mathrm{SemiStab}_{k}$.
\end{rem}

Let $(X,N_{X}), (Y,N_{Y}), (Z,N_{Z})$ be objects of $\mathrm{SemiStab}_{k}$, and let $V\in\mathrm{Cor}(X,Y)^{\ast}$, $W\in\mathrm{Cor}(Y,Z)^{\ast}$. Let $W\circ V\in\mathrm{Cor}(X,Z)$ be the composition of $V$ and $W$ as defined in \cite[Definition 1A.11]{MVW06}, so $W\circ V$ is the pushforward of $W_{V}$ along the projection $X\times_{k} Y\times_{k}Z\rightarrow X\times_{k}Z$, where $W_{V}$ is the relative cycle given by pulling back $W$ along the map $V\rightarrow Y$ \cite[Theorem 1A.8]{MVW06}. Since the restriction of $W_{V}$ to $X^{\mathrm{triv}}\times_{k}Y^{\mathrm{triv}}\times_{k}Z$ is the relative cycle $(W_{X^{\mathrm{triv}}})_{V_{Y^{\mathrm{triv}}}}$, we have that $(W\circ V)_{Y^{\mathrm{triv}}}=W_{X^{\mathrm{triv}}}\circ V_{Y^{\mathrm{triv}}}$, and $W_{X^{\mathrm{triv}}}\circ V_{Y^{\mathrm{triv}}}\in\mathrm{Cor}(X^{\mathrm{triv}},Y^{\mathrm{triv}})$ because $W$ and $V$ are finite log-correspondences. The composition of finite correspondences therefore gives a well-defined composition
\begin{align*}
\mathrm{Cor}(X,Y)^{\ast}\times& \mathrm{Cor}(Y,Z)^{\ast}\rightarrow\mathrm{Cor}(X,Z)^{\ast} \\
& (V,W)\mapsto W\circ V
\end{align*}
for finite log-correspondences. If $f:(X,N_{X})\rightarrow (Y,N_{Y})$ is a morphism, then $W\circ\Gamma_{f}$ is the relative cycle $W_{X}$. In particular, $id_{X}:=\Gamma_{\mathrm{id}}\in\mathrm{Cor}(X,X)^{\ast}$ is the identity with respect to composition.

\begin{defn}
Let $\mathrm{SemiStabCor}_{k}$ be the category whose objects are the same as those of $\mathrm{SemiStab}_{k}$ and whose morphisms from $(X,N_X)$ to $(Y,N_Y)$ are the elements of $\mathrm{Cor}(X,Y)^{\ast}$. 
\end{defn}

Then $\mathrm{SemiStabCor}_{k}$ is an additive category and there is a faithful functor $\mathrm{SemiStab}_{k}\rightarrow\mathrm{SemiStabCor}_{k}$ given by
\begin{equation*}
(X,N_{X})\mapsto (X,N_{X}), \ \ \ \ (f:(X,N_{X})\rightarrow (Y,N_{Y}))\mapsto\Gamma_{f}\,.
\end{equation*}

\begin{defn}
A presheaf with transfers is a contravariant additive functor $F:\mathrm{SemiStabCor}_{k}\rightarrow\mathrm{Ab}$ to the category of abelian groups.
\end{defn}

An important source of presheaves with transfers is as follows. If $(Y,M_{Y})$ is an fs log-scheme over $\mathrm{Spec}\,k$, the presheaf
\begin{align*}
\mathbb{Z}_{\mathrm{tr}}(Y)^{\ast}\,:\,
& \,\mathrm{SemiStab}_{k}\rightarrow\mathrm{Ab} \\
& \, (X,N_{X})\mapsto\mathrm{Cor}(X,Y)^{\ast} 
\end{align*}
is a presheaf with transfers by virtue of the composition product of finite log-correspondences.

We shall say that a presheaf of abelian groups $F:\mathrm{SemiStab}_{k}\rightarrow\mathrm{Ab}$ is a Zariski sheaf if the restriction of $F$ to each $(X,N_{X})$ in $\mathrm{SemiStab}_{k}$ is a Zariski sheaf on $X$. That is, if $i_{1}:(U_{1},N_{U_{1}})\hookrightarrow (X,N_{X})$ and $i_{2}:(U_{2},N_{U_{2}})\hookrightarrow (X,N_{X})$ are open immersions such that $X=U_{1}\cup U_{2}$, then the sequence
\begin{equation*}
\resizebox{1.0\hsize}{!}{$
0\rightarrow F(X,N_{X})\xrightarrow{\mathrm{diag}} F\left((U_{1},N_{U_{1}})\amalg(U_{2},N_{U_{2}})\right)\xrightarrow{(+,-)} F\left((U_{1},N_{U_{1}})\times^{fs}_{(X,M_{X})}(U_{2},N_{U_{2}})\right)
$}
\end{equation*}
is exact. Notice that the underlying scheme of $(U_{1},N_{U_{1}})\times^{fs}_{(X,N_{X})}(U_{2},N_{U_{2}})$ is $U_{1}\cap U_{2}$ because $i_{1}$ and $i_{2}$ are strict.

\begin{lemma}\label{sheaf}
Let $(Y,M_{Y})$ be an fs log-scheme over $\mathrm{Spec}\,k$. Then $\mathbb{Z}_{\mathrm{tr}}(Y)^{\ast}$ is a Zariski sheaf. In particular, $C_{\ast}(\mathbb{Z}_{\mathrm{tr}}(Y)^{\ast})$ is a chain complex of Zariski sheaves, where $C_{\ast}(-)$ is the simplicial construction given in \cite[\S0]{SV00a} and \cite[\S2]{MVW06}.
\end{lemma}
\begin{proof}
Let $(X,N_{X})$ be an object of $\mathrm{SemiStab}_{k}$ and let $(U_{1},N_{U_{1}})\hookrightarrow (X,N_{X})$, $(U_{2},N_{U_{2}})\hookrightarrow (X,N_{X})$ be open immersions such that $X=U_{1}\cup U_{2}$. The map $\mathbb{Z}_{\mathrm{tr}}(Y)^{\ast}(X,N_{X})\xrightarrow{\mathrm{diag}}\mathbb{Z}_{\mathrm{tr}}(Y)^{\ast}((U_{1},N_{U_{1}})\amalg(U_{2},N_{U_{2}}))$ is the pullback of cycles along the surjective morphism $(U_{1}\amalg U_{2})\times Y\rightarrow X\times Y$, and is therefore injective. To see that $\mathbb{Z}_{\mathrm{tr}}(Y)^{\ast}$ is a Zariski sheaf, it remains to show  that if $Z_{1}$ and $Z_{2}$ are finite log-correspondences from $(U_{1},N_{U_{1}})$ (resp. $(U_{2},N_{U_{2}})$) to $(Y,M_{Y})$ that coincide on $(U_{1}\cap U_{2})\times Y$, then there is a finite log-correspondence $Z$ from $(X,N_{X})$ to $(Y,M_{Y})$ whose restriction to $U_{i}\times Y$ is $Z_{i}$ for each $i=1,2$. By definition, $Z_{1}=\sum_{j=1}^{s}\lambda_{j}Z_{1,j}$ and $Z_{2}=\sum_{j=1}^{t}\mu_{j}Z_{2,j}$ are finite linear combinations, where the $Z_{1,j}$ (resp. $Z_{2,j}$) are universally integral relative cycles of $U_{1}\times Y$ (resp. $U_{2}\times Y$) which are finite and surjective over $U_{1}$ (resp. over $U_{2}$). For each $i=1,2$, let $\iota_{i}:(U_{1}\cap U_{2})\times Y\hookrightarrow U_{i}\times Y$ be the obvious open immersion. Then, by assumption, we have
\begin{equation*}
\sum_{j=1}^{s}\lambda_{j}\iota_{1}^{-1}(Z_{1,j})=\sum_{j=1}^{t}\mu_{j}\iota_{2}^{-1}(Z_{2,j})\,.
\end{equation*}
We see then that $s=t$. Re-labelling, we may assume that $\lambda_{j}=\mu_{j}$ and $\iota_{1}^{-1}(Z_{1,j})=\iota_{2}^{-1}(Z_{2,j})$ for all $j=1,\ldots, s$. But then the cycle
\begin{equation*}
Z=\sum_{j=1}^{s}\lambda_{j}(Z_{1,j}\cup Z_{2,j})
\end{equation*} 
is a finite correspondence from $X$ to $Y$ whose restriction to $U_{i}\times Y$ is $Z_{i}$ for each $i=1,2$. Moreover, it is clear that $Z$ is a finite log-correspondence. This proves that $\mathbb{Z}_{\mathrm{tr}}(Y)^{\ast}$ is a Zariski sheaf.

Now let $\Delta^{\bullet}$ be the cosimplicial $k$-scheme given by
\begin{equation*}
\Delta^{i}=\mathrm{Spec}\,k[X_{0},\ldots,X_{i}]/(X_{0}+\cdots+X_{i}-1)
\end{equation*}
with the $j$-th face map $\partial_{j}:\Delta^{i}\rightarrow\Delta^{i+1}$ given by setting $X_{j}=0$. We consider $\Delta^{\bullet}$ as a cosimplicial fs log-scheme over $\mathrm{Spec}\,k$ by endowing each $\Delta^{i}$ with trivial log-structure. Then for every fs log-scheme $(X,N_{X})$ over $\mathrm{Spec}\,k$, the underlying scheme of $(X,N_{X})\times^{\mathrm{fs}}_{\mathrm{Spec}\,k}\Delta^{i}$ is $X\times\Delta^{i}$ and the log-structure is the inverse image log-structure of $N_{X}$ along the projection $\mathrm{pr}_{X}:X\times\Delta^{i}\rightarrow X$.

Let $(Y,M_{Y})$ be an fs log-scheme over $\mathrm{Spec}\,k$. Since each $\Delta^{i}$ is (classically) smooth and $\mathbb{Z}_{\mathrm{tr}}(Y)^{\ast}$ is a Zariski sheaf, the presheaves
\begin{equation*}
C_{i}(\mathbb{Z}_{\mathrm{tr}}(Y)^{\ast}):(X,N_{X})\mapsto\mathbb{Z}_{\mathrm{tr}}(Y)^{\ast}((X\times\Delta^{i},\mathrm{pr}^{\ast}_{X}N_{X}))
\end{equation*}
are also Zariski sheaves for each $i$, and thus $C_{\ast}(\mathbb{Z}_{\mathrm{tr}}(Y)^{\ast})$ is a complex of Zariski sheaves.
\end{proof}

\subsection{Log-motivic cohomology}\label{log-motivic cohomology section}
\ \par

For $n\geq 1$, let $(\mathbb{A}^{n}_{k},D_{n})$ be the log-scheme over $\mathrm{Spec}\,k$ whose underlying scheme is $\mathbb{A}_{k}^{n}$, and whose log-structure is the log-structure associated to the divisor \begin{equation*}
D_{n}=\{0\}\times\mathbb{A}_{k}^{n-1}+\mathbb{A}^{1}_{k}\times\{0\}\times\mathbb{A}_{k}^{n-2}+\cdots+\mathbb{A}_{k}^{n-1}\times\{0\}\,.
\end{equation*}
Following \cite[\S3]{SV00a}, define $\mathbb{Z}_{\mathrm{tr}}(\mathbb{A}_{k}^{\wedge n})^{\ast}$ to be the presheaf with transfers $\mathbb{Z}_{\mathrm{tr}}((\mathbb{A}_{k}^{n},D_{n}))^{\ast}/\mathcal{E}_{n}$ where $\mathcal{E}_{n}$ is the sum of the images of the maps $\mathbb{Z}_{\mathrm{tr}}((\mathbb{A}_{k}^{n-1},D_{n-1}))^{\ast}\rightarrow\mathbb{Z}_{\mathrm{tr}}((\mathbb{A}_{k}^{n},D_{n}))^{\ast}$ induced by the embeddings $\mathbb{A}_{k}^{n-1}\hookrightarrow\mathbb{A}_{k}^{n}$ given by $(x_1,\ldots,x_{n-1})\mapsto(x_1,\ldots,1,\ldots,x_{n-1})$. 

In the definition of the log-motivic complex $\mathbb{Z}_{\log}(n)$ we need to work with a more restrictive class of finite log-correspondences in $\mathrm{Cor}(X,\mathbb{A}^{n}_{k})^{\ast}$. Let $Z\in\mathrm{Cor}(X,\mathbb{A}^{n}_{k})^{\ast}\subset\mathrm{Cor}(X,(\mathbb{P}^{1}_{k})^{\times n})$. Consider $Z_{i}:=\mathrm{pr}_{i}(Z)$, where $\mathrm{pr}_{i}:X\times(\mathbb{P}^{1}_{k})^{\times n}\rightarrow X\times\mathbb{P}^{1}_{k}$ is the $i$-th projection, which is finite over $X$ and $Z_{i}\in\mathrm{Cor}(X,\mathbb{A}^{1}_{k})^{\ast}\subset\mathrm{Cor}(X,\mathbb{P}^{1}_{k})$. Then $Z_{i}$ defines an element in $\mathrm{Pic}(X\times\mathbb{P}^{1}_{k})=\mathrm{Pic}(X)\times\mathbb{Z}$ and there exists a rational function $f_{i}$ on $X\times\mathbb{P}^{1}_{k}$ and a divisor $C_{i}$ on $X$ such that for ${f_{i}|}_{X\times\mathbb{A}^{1}_{k}}$ we have $Z_{i}=D(f_{i})+C_{i}\times\mathbb{A}^{1}_{k}$, see \cite[Lemma 4.4]{MVW06} and the erratum in \cite{W}. Define 
\begin{equation*}
\mathrm{Cor}_{0}(X,\mathbb{A}_{k}^{n})^{\ast}=\{Z\in\mathrm{Cor}(X,\mathbb{A}^{n}_{k})^{\ast}\,|\,f_{i}(0)\in\underline{N}^{\mathrm{gp}}_{X}(X)\text{ for all }i\}\,.
\end{equation*}
Here and in the following for a rational function $f$ on $Y=\mathrm{Spec}\,A$ we define its value at $0$ as follows: let $f=g(t)/h(t)$ where $g$ and $h$ are polynomials with coefficients in $A$. If $g(0)$ and $h(0)$ lie in $\mathcal{O}_{Y}(Y)\cap j_{\ast}\mathcal{O}_{Y^{\mathrm{sm}}}^{\times}(Y)$, then $f(0)$ is well-defined.
Note that the condition on $f_{i}$ is compatible with the general assumption that we deal with finite log-correspondences, namely ${f_{i}(0)|}_{X^{\mathrm{sm}}}\in\mathcal{O}(X^{\mathrm{sm}})^{\ast}$ if and only if ${Z_{i}}|_{X^{\mathrm{sm}}}\in\mathrm{Cor}(X^{\mathrm{sm}},\mathbb{G}_{m})$. For $Y=(\mathbb{A}^{n}_{k},D_{n})$, consider the modified presheaf with transfer, also denoted by $\mathbb{Z}_{\mathrm{tr}}(\mathbb{A}_{k}^{n},D_{n})^{\ast}$:
\begin{align*}
& \mathrm{SemiStab}_{k}\rightarrow\mathrm{Ab} \\
& (X,N_{X})\mapsto\mathrm{Cor}_{0}(X,\mathbb{A}_{k}^{n})^{\ast}\,.
\end{align*}
From now on, whenever we write $\mathbb{Z}_{\mathrm{tr}}(Y)^{\ast}$ for $Y=(\mathbb{A}^{n}_{k},D_{n})$ we shall always mean this restricted presheaf with transfer. As in Lemma \ref{sheaf} $\mathbb{Z}_{\mathrm{tr}}(Y)^{\ast}$ is in fact a Zariski sheaf.

\begin{defn}
The log-motivic complex $\mathbb{Z}_{\log}(n)$ of weight $n$ is the complex of sheaves with transfers $C_{\ast}(\mathbb{Z}_{\mathrm{tr}}(\mathbb{A}_{k}^{\wedge n})^{\ast})[-n]$.
\end{defn}

Since $\mathbb{Z}_{\log}(n)[n]$ is a direct summand of $C_{\ast}(\mathbb{Z}_{\mathrm{tr}}(\mathbb{A}_{k}^{n},D_{n})^{\ast})$, the log-motivic complex $\mathbb{Z}_{\log}(n)$ is a complex of Zariski sheaves. If $(X,N_{X})$ is object of $\mathrm{SemiStab}_{k}$, then $\mathbb{Z}_{\log,X}(n)$ denotes the restriction of $\mathbb{Z}_{\log}(n)$ to the Zariski site of $X$.

\begin{defn}
Let $(X,N_{X})$ be an object of $\mathrm{SemiStab}_{k}$. We define the log-motivic cohomology of $(X,N_{X})$ to be the hypercohomology of $\mathbb{Z}_{\log}(n)$ with respect to the Zariski topology:
 \begin{equation*}
H_{\log-\mathcal{M}}^{i}(X,\mathbb{Z}(n)):=\mathbb{H}^{i}_{\mathrm{Zar}}(X,\mathbb{Z}_{\log}(n))\,.
\end{equation*}
\end{defn}

Notice that if $X$ is a smooth scheme over $\mathrm{Spec}\,k$, considered as a log-scheme by endowing it with the trivial log-structure, then the log-motivic cohomology of $X$ coincides with the motivic cohomology of $X$ as defined by Suslin-Voevodsky.

\begin{rem}\label{special example}
Of course, it would be desirable to work with the ``full'' monoid sheaf $\mathcal{O}_{Y}\cap j_{\ast}\mathcal{O}_{Y^{\mathrm{sm}}}$ in the definition of the log-motivic complex. The main reason why we use the possibly smaller monoid sheaf $\underline{N}_{Y}$ is a comparison between logarithmic Milnor $K$-theory and the modified logarithmic Hyodo-Kato Hodge-Witt sheaf which provides a semistable version of the Bloch-Gabber-Kato theorem (Theorem \ref{mod-p K-theory}). The comparison map uses explicitly the elements $g_{i}\in\underline{N}_{Y}(Y)^{\mathrm{gp}}$ and is -- a priori -- not defined for $\mathcal{O}_{Y}\cap j_{\ast}\mathcal{O}_{Y^{\mathrm{sm}}}$. Moreover, the $p$-adic deformation theory carried out in Section \ref{p-adic deformation section} relies on a gluing argument along the logarithmic Hyodo-Kato sheaf, hence only makes sense for a log-motivic cohomology defined by using the more restrictive class of finite log-correspondences. We shall return to this point in Remark \ref{usual log-Milnor}.
\end{rem}

\subsection{The log-motivic complex of weight one}
\ \par

Let $\mathcal{M}^{\ast}(\mathbb{P}_{k}^{1};0,\infty):\mathrm{SemiStab}_{k}\rightarrow\mathrm{Ab}$ be the functor which sends a semistable variety $(X,N_{X})$ to the group of rational functions on $X\times\mathbb{P}_{k}^{1}$ which are regular in a neighbourhood of $X\times\{0,\infty\}$ and equal to $1$ on $X\times\{0,\infty\}$. Then $\mathcal{M}^{\ast}(\mathbb{P}_{k}^{1};0,\infty)$ is a sheaf for the Zariski topology on $\mathrm{SemiStab}_{k}$.

Let $(Y,N_{Y})$ be a semistable variety over $k$. Then the trivial locus $Y^{\mathrm{triv}}$ of the structure morphism $(Y,N_{Y})\rightarrow\mathrm{Spec}\,k$ coincides with the smooth locus $Y^{\mathrm{sm}}$ of $Y$. Let $j:Y^{\mathrm{sm}}\hookrightarrow Y$ be the open immersion. We have a short exact sequence of abelian groups
\begin{equation*}
0\rightarrow\mathcal{M}^{\ast}(\mathbb{P}_{k}^{1};0,\infty)(Y^{\mathrm{sm}})\rightarrow\mathbb{Z}_{\mathrm{tr}}(\mathbb{G}_{m})(Y^{\mathrm{sm}})\rightarrow\mathbb{Z}\oplus\mathcal{O}_{Y^{\mathrm{sm}}}^{\ast}(Y^{\mathrm{sm}})\rightarrow 0
\end{equation*}
by \cite[Lemma 4.4]{MVW06}. We shall extend this exact sequence over $Y$ as follows:

Recall that $\mathrm{Cor}(Y,\mathbb{A}^{1}_{k})\subset\mathrm{Cor}(Y,\mathbb{P}_{k}^{1})$ and $\mathrm{Pic}(Y\times\mathbb{P}_{k}^{1})=\mathrm{Pic}(Y)\times\mathbb{Z}$, so to any $Z\in\mathrm{Cor}(Y,\mathbb{A}_{k}^{1})$ we can associate a unique rational function $f$ on $Y\times\mathbb{P}_{k}^{1}$ and a divisor $C$ on $Y$ such that the Weil divisor $D(f)$ is equal to $Z+C\times\mathbb{A}^{1}_{k}$, and such that there exists $n\in\mathbb{Z}$ with $f/t^{n}=1$ on $Y\times\{\infty\}$ (see again \cite[Lemma 4.4]{MVW06} and its erratum in \cite{W}). Define 
\begin{align*}
\mathrm{Cor}_{0}(Y,\mathbb{A}_{k}^{1})^{\ast}:=\{Z\in\mathrm{Cor}(Y,\mathbb{A}^{1}_{k})\,|\, \text{ if } & D(f)=Z+C\times\mathbb{A}^{1}_{k}\text{ as above then}\\
&f(0)\in\underline{N}^{\mathrm{gp}}_{Y}(Y),\text{ and }Z|_{Y^{\mathrm{sm}}\times\mathbb{A}^{1}_{k}}\in\mathrm{Cor}(Y^{\mathrm{sm}},\mathbb{G}_{m})\}\,.
\end{align*}
In particular, if $D(f)=Z+C\times\mathbb{A}^{1}_{k}\in\mathrm{Cor}_{0}(Y,\mathbb{A}_{k}^{1})^{\ast}$ then $f(0)|_{Y^{\mathrm{sm}}}\in\mathcal{O}^{\ast}(Y^{\mathrm{sm}})$. Define
\begin{align*}
\lambda\,:\,
& \mathrm{Cor}_{0}(Y,\mathbb{A}_{k}^{1})^{\ast}\rightarrow\mathbb{Z}\oplus\underline{N}_{Y}(Y)^{\mathrm{gp}} \\
& \ \ \ \ Z\mapsto(n,(-1)^{n}f(0))\,.
\end{align*}
Then $\lambda$ is surjective and we can rewrite $\lambda$ as a surjective map
\begin{equation*}
\mathbb{Z}_{\mathrm{tr}}(\mathbb{A}_{k}^{1})^{\ast}((Y,N_Y))\rightarrow\mathbb{Z}\oplus\underline{N}_{Y}(Y)^{\mathrm{gp}}
\end{equation*} 
(see the construction of $\lambda$ in the proof of \cite[Lemma 4.4]{MVW06}). The kernel of $\lambda$ is exactly $\mathcal{M}^{\ast}(\mathbb{P}_{k}^{1};0,\infty)((Y,N_Y))$, so we get a short exact sequence
\begin{equation*}
0\rightarrow\mathcal{M}^{\ast}(\mathbb{P}_{k}^{1};0,\infty)((Y,N_Y))\rightarrow\mathbb{Z}_{\mathrm{tr}}(\mathbb{A}_{k}^{1})^{\ast}((Y,N_Y))\rightarrow\mathbb{Z}\oplus\underline{N}_{Y}(Y)^{\mathrm{gp}}\rightarrow 0\,.
\end{equation*}

Since $\lambda$ respects transfers \cite[Lemma 4.5]{MVW06} we can apply the functor $C_{\ast}$ to the exact sequence of sheaves with transfers
\begin{equation*}
0\rightarrow\mathcal{M}^{\ast}(\mathbb{P}^{1};0,\infty)\rightarrow\mathbb{Z}_{\mathrm{tr}}(\mathbb{A}_{k}^{1})^{\ast}\rightarrow\mathbb{Z}\oplus\underline{N}_{Y}^{\mathrm{gp}}\rightarrow 0
\end{equation*}
to get an exact sequence of complexes of sheaves with transfer
\begin{equation*}
0\rightarrow C_{\ast}(\mathcal{M}^{\ast}(\mathbb{P}^{1};0,\infty))\rightarrow C_{\ast}(\mathbb{Z}_{\mathrm{tr}}(\mathbb{A}_{k}^{1})^{\ast})\rightarrow C_{\ast}(\mathbb{Z}\oplus\underline{N}_{Y}^{\mathrm{gp}})\rightarrow 0
\end{equation*}
on $Y$. Splitting off $0\rightarrow C_{\ast}(\mathbb{Z})\rightarrow C_{\ast}(\mathbb{Z})\rightarrow 0$ yields an exact sequence
\begin{equation*}
0\rightarrow C_{\ast}(\mathcal{M}^{\ast}(\mathbb{P}^{1};0,\infty))\rightarrow \mathbb{Z}_{\log}(1)[1]\rightarrow C_{\ast}(\underline{N}_{Y}^{\mathrm{gp}})\rightarrow 0\,.
\end{equation*}
But $C_{\ast}(\underline{N}_{Y}^{\mathrm{gp}})=\underline{N}_{Y}^{\mathrm{gp}}$ because $N_{Y}^{\mathrm{gp}}(U\times\Delta^{n})=N_{Y}^{\mathrm{gp}}(U)$. By \cite[Lemma 4.6]{MVW06} (which applies to $Y$ since the smoothness assumption is not used in the proof, nor in \cite[Lemma 2.18]{MVW06}) the complex $C_{\ast}(\mathcal{M}^{\ast}(\mathbb{P}^{1};0,\infty))$ is an acyclic complex of sheaves. Then we have shown the following:
\begin{prop}\label{Z(1)}
Let $(Y,N_{Y})$ be a semistable variety over $k$. Then 
\begin{equation*}
\mathbb{Z}_{\log,Y}(1)\cong\underline{N}_{Y}^{\mathrm{gp}}[-1]=:\mathbb{G}_{m}^{\log}[-1]\,.
\end{equation*}
\end{prop}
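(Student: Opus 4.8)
The plan is to read off the isomorphism from the short exact sequence of complexes of Zariski sheaves on $Y$
\begin{equation*}
0\rightarrow C_{\ast}(\mathcal{M}^{\ast}(\mathbb{P}^{1};0,\infty))\rightarrow \mathbb{Z}_{\log,Y}(1)[1]\rightarrow C_{\ast}(M_{Y}^{\mathrm{gp}}/\mathbb{Z}_{Y})\rightarrow 0
\end{equation*}
constructed above, together with the two facts already recorded: that $C_{\ast}(\mathcal{M}^{\ast}(\mathbb{P}^{1};0,\infty))$ is acyclic (by \cite[Lemma 4.6]{MVW06}, whose proof, like that of \cite[Lemma 2.18]{MVW06}, does not use smoothness) and that $M_{Y}^{\mathrm{gp}}$ is unchanged by pullback along the projections $U\times\Delta^{i}\to U$. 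First I would pass to the long exact sequence of Zariski cohomology sheaves attached to this short exact sequence. Since every cohomology sheaf of the left-hand term vanishes, the long exact sequence collapses into isomorphisms $\mathcal{H}^{j}(\mathbb{Z}_{\log,Y}(1)[1])\xrightarrow{\sim}\mathcal{H}^{j}(C_{\ast}(M_{Y}^{\mathrm{gp}}/\mathbb{Z}_{Y}))$ for all $j$; that is, the quotient map $\mathbb{Z}_{\log,Y}(1)[1]\to C_{\ast}(M_{Y}^{\mathrm{gp}}/\mathbb{Z}_{Y})$ is a quasi-isomorphism.

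It then remains to identify the target with $\mathbb{G}_{m}^{\log}=M_{Y}^{\mathrm{gp}}/\mathbb{Z}_{Y}$ placed in degree $0$. The key observation is that $M_{Y}^{\mathrm{gp}}$, the constant sheaf $\mathbb{Z}_{Y}$, and hence their quotient, all take the same sections on $U$ and on $U\times\Delta^{i}$. For $M_{Y}^{\mathrm{gp}}$ this is the identity $M_{Y}^{\mathrm{gp}}(U\times\Delta^{i})=M_{Y}^{\mathrm{gp}}(U)$ already in use, which holds because a global section of the pulled-back log structure is a global section of the original one (the unit sheaf being homotopy invariant on the reduced scheme $U$, and $\Delta^{i}\cong\mathbb{A}_{k}^{i}$ having connected fibres over $U$); for $\mathbb{Z}_{Y}$ it is immediate since $\Delta^{i}$ is connected, so $U$ and $U\times\Delta^{i}$ have the same connected components. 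Consequently the simplicial sheaf $i\mapsto C_{i}(M_{Y}^{\mathrm{gp}}/\mathbb{Z}_{Y})$ is constant, the alternating sums of its face maps are alternately $0$ and the identity, and therefore the augmentation $C_{\ast}(M_{Y}^{\mathrm{gp}}/\mathbb{Z}_{Y})\to (M_{Y}^{\mathrm{gp}}/\mathbb{Z}_{Y})[0]$ is a quasi-isomorphism of complexes of Zariski sheaves.

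Splicing the two quasi-isomorphisms gives $\mathbb{Z}_{\log,Y}(1)[1]\simeq\mathbb{G}_{m}^{\log}$ in the derived category of Zariski sheaves on $Y$, equivalently $\mathbb{Z}_{\log,Y}(1)\cong\mathbb{G}_{m}^{\log}[-1]$, which is the assertion. This is the faithful log-analogue of the computation of $\mathbb{Z}(1)$ in the smooth case, and essentially all of the substance --- the construction of $\lambda$, the verification that $\lambda$ respects transfers, and the acyclicity input \cite[Lemma 4.6]{MVW06} --- has already been carried out before the statement. The one step that still requires a moment's care is the identification of $C_{\ast}(M_{Y}^{\mathrm{gp}}/\mathbb{Z}_{Y})$ in the second paragraph: one must check that the homotopy invariance of $M_{Y}^{\mathrm{gp}}$ and $\mathbb{Z}_{Y}$ genuinely descends to the quotient sheaf and is not disturbed by sheafification, which for semistable $Y$ reduces to the homotopy invariance of the units sheaf on the (reduced) smooth locus $Y^{\mathrm{sm}}$.
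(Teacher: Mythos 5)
Your argument is correct and follows the paper's own route exactly: the paper derives the proposition from the same short exact sequence, the same acyclicity of $C_{\ast}(\mathcal{M}^{\ast}(\mathbb{P}^{1};0,\infty))$ via \cite[Lemma 4.6]{MVW06}, and the same observation that $M_{Y}^{\mathrm{gp}}(U\times\Delta^{i})=M_{Y}^{\mathrm{gp}}(U)$ makes $C_{\ast}(M_{Y}^{\mathrm{gp}}/\mathbb{Z}_{Y})$ quasi-isomorphic to $M_{Y}^{\mathrm{gp}}/\mathbb{Z}_{Y}$ in degree zero. The extra care you take with the constant simplicial object and with homotopy invariance descending to the quotient is a welcome elaboration of details the paper leaves implicit.
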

This generalises the smooth case considered in \cite[Lemma 3.2]{SV00a}.
\begin{cor}\label{corollary weight one}
\begin{equation*}
H_{\log-\mathcal{M}}^{i}(Y,\mathbb{Z}(1))\cong\left\lbrace\begin{array}{cl}
H_{\mathrm{Zar}}^{i-1}(Y,\mathbb{G}_{m}^{\log}) & \text{ if }i=1,2 \\
0 & \text{ if }i\neq 1,2\,.  \\
\end{array}\right.
\end{equation*}
\end{cor}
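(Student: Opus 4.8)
The plan is to deduce Corollary \ref{corollary weight one} directly from Proposition \ref{Z(1)}. Since $\mathbb{Z}_{\log,Y}(1)$ is, as a complex of Zariski sheaves on $Y$, quasi-isomorphic to the single sheaf $\mathbb{G}_m^{\log}=M_Y^{\mathrm{gp}}/\mathbb{Z}_Y$ sitting in cohomological degree $1$, one gets immediately
\[
H^i_{\log-\mathcal{M}}(Y,\mathbb{Z}(1))=\mathbb{H}^i_{\mathrm{Zar}}(Y,\mathbb{Z}_{\log,Y}(1))\cong H^{i-1}_{\mathrm{Zar}}(Y,\mathbb{G}_m^{\log}).
\]
For $i\le 0$ this vanishes because Zariski cohomology in negative degrees is zero; for $i=1$ it is $H^0(Y,\mathbb{G}_m^{\log})$; and for $i=2$ it is $H^1_{\mathrm{Zar}}(Y,\mathbb{G}_m^{\log})$, which by definition is the logarithmic Picard group $\mathrm{Pic}^{\log}(Y)$, i.e.\ the group of isomorphism classes of $\mathbb{G}_m^{\log}$-torsors (equivalently, logarithmic line bundles) on $Y_{\mathrm{Zar}}$. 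So these three cases are immediate.

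All the content is in the remaining assertion, namely that $H^j_{\mathrm{Zar}}(Y,\mathbb{G}_m^{\log})=0$ for $j\ge 2$; this is the semistable analogue of the fact, used implicitly in \cite[Lemma 3.2]{SV00a}, that $\mathbb{G}_m$ has Zariski cohomological dimension $\le 1$ on a smooth variety. The plan is thus to prove that $\mathbb{G}_m^{\log}$ has Zariski cohomological dimension $\le 1$ on $Y$. For $j>\dim Y$ this is Grothendieck vanishing, so one may assume $2\le j\le\dim Y$. I would use the short exact sequence of Zariski sheaves
\[
0\longrightarrow\mathcal{O}_Y^*\longrightarrow\mathbb{G}_m^{\log}\longrightarrow\overline{M}_Y^{\mathrm{gp}}/\mathbb{Z}_Y\longrightarrow 0,
\]
where $\overline{M}_Y=M_Y/\mathcal{O}_Y^*$ is the characteristic monoid sheaf; here the inclusion $\mathcal{O}_Y^*\hookrightarrow\mathbb{G}_m^{\log}$ is legitimate because the canonical generator of $\mathbb{Z}_Y$ maps to the non-unit $T_1\cdots T_r$ in $M_Y^{\mathrm{gp}}$, so $\mathbb{Z}_Y\cap\mathcal{O}_Y^*=0$. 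The quotient $\overline{M}_Y^{\mathrm{gp}}/\mathbb{Z}_Y$ is a constructible sheaf supported on the singular locus $Y^{\mathrm{sing}}$, locally constant of stalk $\mathbb{Z}^{r-1}$ along the stratum where exactly $r$ branches of $Y$ meet; refining the stratification of $Y^{\mathrm{sing}}$ until this sheaf is constant on each irreducible locally closed piece, and using that a constant sheaf on an irreducible space is flasque, a finite d\'evissage gives $H^j_{\mathrm{Zar}}(Y,\overline{M}_Y^{\mathrm{gp}}/\mathbb{Z}_Y)=0$ for all $j\ge 1$. For $\mathcal{O}_Y^*$ I would reduce, via the normalization $\nu\colon\widetilde{Y}\to Y$ (finite, with $\widetilde{Y}$ a disjoint union of smooth $k$-varieties) and the associated simplicial/Mayer--Vietoris resolution, to the Zariski cohomology of $\mathcal{O}^*$ on the smooth components of $Y$ and their multiple intersections---where the length-one Gersten resolution applies---together with the constructible sheaves on $Y^{\mathrm{sing}}$ recording the gluing, treated again by d\'evissage and Noetherian induction on $\dim Y$. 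Feeding this into the long exact sequences yields the vanishing, and hence the corollary.

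The step I expect to be the real obstacle is controlling $H^j(Y,\mathcal{O}_Y^*)$, and hence $H^j(Y,\mathbb{G}_m^{\log})$, for $j\ge 2$: in contrast to the smooth case, $\mathbb{G}_m^{\log}$ on the singular $Y$ has no evident length-one flasque (Gersten) resolution, so the d\'evissage forces one to keep careful track of how the contributions of the closed strata of $Y^{\mathrm{sing}}$ assemble. The cleanest way to organise this is to construct directly a length-one flasque resolution of $\mathbb{G}_m^{\log}$ on $Y$ out of the generic points of $Y$ together with the generic points of its closed strata; this is exactly the weight-one shadow of the Kerz-type Gersten analysis of $\mathbb{Z}_{\log}(n)$ and of the logarithmic Milnor $K$-sheaves used elsewhere in the paper, and I would prove it there and invoke it here.
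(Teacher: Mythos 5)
For $i\le 2$ your argument coincides with the paper's: the corollary is read off from Proposition \ref{Z(1)}, the only point of substance being the identification of $\mathrm{Pic}^{\log}(Y)$ with $H^{1}(Y,M_{Y}^{\mathrm{gp}}/\mathbb{Z}_{Y})$. Note one small discrepancy: the paper \emph{defines} $\mathrm{Pic}^{\log}(Y)$ as $H^{1}(Y,M_{Y}^{\mathrm{gp}})$, not as the group of $\mathbb{G}_{m}^{\log}$-torsors, so you still owe the comparison $H^{1}(Y,M_{Y}^{\mathrm{gp}})\cong H^{1}(Y,M_{Y}^{\mathrm{gp}}/\mathbb{Z}_{Y})$, which the paper obtains from the vanishing of the higher Zariski cohomology of the constant sheaf $\mathbb{Z}_{Y}$. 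The paper's proof stops there and declares everything else immediate; you are right that the vanishing for $i\ge 3$, i.e.\ $H^{j}_{\mathrm{Zar}}(Y,\mathbb{G}_{m}^{\log})=0$ for $j\ge 2$, does not follow from Proposition \ref{Z(1)} alone and is the only part with real content.

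However, your proposed argument for that part has a genuine gap, in two places. First, the d\'evissage for $\overline{M}_{Y}^{\mathrm{gp}}/\mathbb{Z}_{Y}$: knowing that a constructible sheaf is constant on each stratum does not yield acyclicity by d\'evissage, because the extension by zero of a constant sheaf from an open stratum is not flasque and has higher cohomology in general; you would need to present the sheaf as an iterated extension of pushforwards of constant sheaves from \emph{closed} irreducible subsets (true in simple cases, e.g.\ it is $i_{E*}\mathbb{Z}$ when $Y=Y_{0}\cup_{E}Y_{1}$, but this requires an argument in general). Second, and more seriously, the reduction of $H^{j}(Y,\mathcal{O}_{Y}^{*})$ to the normalization cannot succeed: already for $Y=Y_{0}\cup_{E}Y_{1}$ with smooth components, the sequence $0\to\mathcal{O}_{Y}^{*}\to\mathcal{O}_{Y_{0}}^{*}\oplus\mathcal{O}_{Y_{1}}^{*}\to i_{E*}\mathcal{O}_{E}^{*}\to 0$ together with Gersten on the components gives $H^{2}_{\mathrm{Zar}}(Y,\mathcal{O}_{Y}^{*})\cong\mathrm{coker}\bigl(\mathrm{Pic}(Y_{0})\oplus\mathrm{Pic}(Y_{1})\to\mathrm{Pic}(E)\bigr)$, and since $H^{\ge 1}(Y,i_{E*}\mathbb{Z})=0$ the same group computes $H^{2}(Y,\mathbb{G}_{m}^{\log})$. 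This cokernel is nonzero for, e.g., a Type II semistable degeneration of a $K3$ surface (finitely generated Picard groups of the components mapping to the Picard group of an elliptic curve), so no length-one flasque resolution of $\mathbb{G}_{m}^{\log}$ can exist there and the route you sketch cannot close the gap. This is worth flagging to the authors as well: their proof gives no argument for $i\ge 3$, and the computation above suggests the vanishing asserted there needs additional hypotheses or a restriction to $i\le 2$; fortunately only the cases $i\le 2$ (indeed only Proposition \ref{Z(1)} itself) are used in the remainder of the paper.
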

\begin{rem}
Note that our definition of $H_{\log-\mathcal{M}}^{2}(Y,\mathbb{Z}(1))$ does not reproduce the logarithmic Picard group considered in \cite{Yam11}. We have equipped the semistable variety $Y$ with a modified log-structure in order to obtain a geometric interpretation which generalises to higher codimension, whereas we do not have such a geometric interpretation for $\mathrm{Pic}^{\log}(Y):=H^{1}(Y,M_{Y}^{\mathrm{gp}})$. On the other hand we will see that the $p$-adic deformation theory of $H^{1}(Y,\underline{N}_{Y}^{\mathrm{gp}})$ is very similar to that of the usual logarithmic Picard group. See Remark \ref{1 plus p sequence}, Proposition \ref{weight one pro-complex} and Remark \ref{Yamashita remark}.
\end{rem}

\section{Logarithmic Milnor $K$-groups}\label{log milnor section}
We are going to define logarithmic Milnor $K$-groups and relate them to the cohomology of the complexes $\mathbb{Z}_{\log}(n)$ in analogy to the smooth case proven by Kerz \cite[Theorem 1.1]{Ker09}. 

Let $(Y,N_{Y})$ be a semistable variety. According to \cite[Proposition 11.3]{Kat96} $Y$ has a covering by open affines $U$ such that $U=\mathrm{Spec}\,A/\pi_{1}\cdots\pi_{r}$ where $A$ is a smooth $k$-algebra and each $A/\pi_{i}$ is smooth. Let $U^{\mathrm{sm}}$ be the smooth locus of $U$ and $j:U^{\mathrm{sm}}\hookrightarrow U$ the open immersion. Define $\underline{N}_{Y}(U)$ as in Definition \ref{alternative log-structure definition} and define the functions 
\begin{equation*}
g_{i}:=\pi_{i}+\prod_{\stackrel{j=1}{j\neq i}}^{r}\pi_{j}\in\underline{N}_{Y}(U)\,.
\end{equation*}
\begin{defn}\label{log Milnor K-theory definition}
For $U\subset Y$ as above, define 
\begin{equation*}
\mathcal{K}_{\log,n}^{\mathrm{Mil}}(U):=\frac{(\underline{N}_{Y}(U)^{\mathrm{gp}})^{\otimes n}}{I}
\end{equation*}
where $I$ is the subgroup generated by elements of the form $a\otimes(1-a)$ with $a,1-a\in\underline{N}_{Y}(U)^{\mathrm{gp}}$, those of the form $a\otimes(-a)$ with $a\in\underline{N}_{Y}(U)^{\mathrm{gp}}$, and those of the form $g_{I}^{n_{I}}x\otimes(1-\pi_{I}^{n_{I}}x)$ ranging over subsets $I\subset\{1,\ldots, r\}$, where $g_{I}^{n_{I}}:=\displaystyle\prod_{i\in I}g_{i}^{n_{i}}$ with $n_{i}\geq 0$, $\pi_{I}^{n_{I}}:=\displaystyle\prod_{i\in I}\pi_{i}^{n_{i}}$ with $n_{i}\geq 0$, and $x\in A^{\times}$ such that $1-\pi_{I}^{n_{I}}x\in A^{\times}$. The elements of $I$ are called (as they are for the usual Milnor $K$-groups) Steinberg relations. The residue class of $a_{1}\otimes\cdots\otimes a_{n}$ in $\mathcal{K}^{\mathrm{Mil}}_{\log, n}(U)$ is denoted by the symbol $\{a_{1},\ldots, a_{n}\}$.
\end{defn}

\begin{rem}\label{special example 2}
Related to Remark \ref{special example}, it would be desirable to work with the full monoid sheaf $\mathcal{O}_{Y}\cap j_{\ast}\mathcal{O}_{Y^{\mathrm{sm}}}$ in the definition of the logarithmic Milnor $K$-theory. We are forced to use Definition \ref{log Milnor K-theory definition} for the comparison with the modified logarithmic Hyodo-Kato Hodge-Witt sheaf (Theorem \ref{mod-p K-theory}). We shall return to this point in Remark \ref{usual log-Milnor}.   
\end{rem}

\begin{prop}
\label{K-theory smooth locus} 
Let $U\subset Y$ be open and let $V=U^{\mathrm{sm}}$. Then there is a canonical map
\begin{equation*}
\mu:\mathcal{K}_{\log,n}^{\mathrm{Mil}}(U)\rightarrow\mathcal{K}_{n}^{\mathrm{Mil}}(V):=\bigoplus_{\eta\in U^{0}}\mathcal{K}_{n}^{\mathrm{Mil}}(V_{\eta})
\end{equation*} 
induced by a canonical map 
\begin{equation*}
\underline{N}^{\mathrm{gp}}_{Y}(U)\rightarrow\underline{N}^{\mathrm{gp}}_{Y}(V)=\bigoplus_{\eta_{i}\in U^{0}}\mathcal{O}_{Y}^{\ast}(V_{\eta_{i}})
\end{equation*}
defined by 
\begin{equation*}
{g_{i}|}_{V_{\eta_{j}}}=\begin{cases} 
      \pi_{i} & \text{if }i\neq j \\
      \prod_{l\neq i}\pi_{l}^{-1} & \text{if }i=j 
   \end{cases}
\end{equation*}
and the natural restriction map on $\mathcal{O}(U)^{\times}$, where $V_{\eta_{i}}:=U_{i}\cap V$ and $U_{i}=\mathrm{Spec}\,A/\pi_{i}$ is the component of $U$ with generic point $\eta_{i}$. The kernel of the map $\mu$ is generated by $G:=\prod_{i} g_{i}$. That is, an element in the kernel is a sum of symbols where at least one entry is a power of $G$.
\end{prop}
\begin{proof}
The map defined in the proposition
\begin{equation*}
\underline{N}_{Y}^{\mathrm{gp}}(U)\rightarrow\underline{N}_{Y}^{\mathrm{gp}}(V)=\bigoplus_{\eta\in U^0}\mathcal{O}_{Y}^{\ast}(V_{\eta})
\end{equation*}
induces maps 
\begin{equation*}
\underline{N}_{Y}^{\mathrm{gp}}(U)^{\otimes n}\rightarrow\bigoplus_{\eta\in U^0}(\mathcal{O}_{Y}^{\ast}\left(V_{\eta})^{\otimes n}\right)
\end{equation*}
and
\begin{equation*}
\mathcal{K}_{\log,n}^{\mathrm{Mil}}(U)\rightarrow\bigoplus_{\eta\in U^0}\mathcal{K}_{n}^{\mathrm{Mil}}(V_{\eta})\,.
\end{equation*}
It is easy to check that the Steinberg relations in Definition \ref{log Milnor K-theory definition} vanish in $\bigoplus_{\eta\in U^0}\mathcal{K}_{n}^{\mathrm{Mil}}(V_{\eta})$. Since $\mathcal{K}_{n}^{\mathrm{Mil}}(V_{\eta})\rightarrow\mathcal{K}_{n}^{\mathrm{Mil}}(k(\eta))$ is injective by \cite[Theorem 6.1]{Ker09}, it is enough to consider the composite map $\mathcal{K}_{\log,n}^{\mathrm{Mil}}(U)\rightarrow\displaystyle\coprod_{\eta\in U^{0}}\mathcal{K}_{n}^{\mathrm{Mil}}(k(\eta))$. The claim for $n=1$ trivially follows from the inclusions
\begin{equation*}
\underline{N}^{\mathrm{gp}}_{Y}(U)/G^{\mathbb{Z}}\hookrightarrow\left(\mathcal{O}_{Y}(U)\cap j_{\ast}\mathcal{O}_{Y}(U^{\mathrm{sm}})^{\ast}\right)^{\mathrm{gp}}\hookrightarrow\mathcal{O}_{Y}(U^{\mathrm{sm}})^{\ast}\hookrightarrow\coprod_{\eta_{i}\in U^{0}}k(\eta_{i})^{\ast}\,.
\end{equation*}

Now let $n\geq 2$. Assume that $\sum z$ is a finite sum of symbols $z=\{a_{1},\ldots,a_{n}\}$ in $\mathcal{K}_{\log,n}^{\mathrm{Mil}}(U)/\langle G\rangle$ that vanishes in $\mathcal{K}_{n}^{\mathrm{Mil}}(k(\eta_{i}))$ for all $i$. Let $T_{1}$ be the subgroup of $\mathcal{K}^{\mathrm{Mil}}_{\log,n}(U)/\langle G\rangle$ generated by symbols where at least on entry is of the form $1+\pi_{1}x$ with $x\in A$. We show that $\sum z\in T_{1}$. Since $\sum z$ vanishes in $\mathcal{K}_{n}^{\mathrm{Mil}}(k(\eta_{1}))$, it is a sum of sums of symbols $z_{1}+\cdots +z_{n}$ that become a multilinear relation modulo $\pi_{1}$ and a sum of individual symbols $z_{0}$ that become Steinberg relations modulo $\pi_{1}$. Let us first assume that in $\sum z$ all $z$ have entries in $A^{\times}$. Without loss of generality, let us assume that only bilinear relations modulo $\pi_1$ occur. Then consider a triple $z_{1}+z_{2}+z_{3}=\{c_{1},\ldots, c_{n}\}-\{a_{1},\ldots,a_{n}\}-\{b_{1},\ldots, b_{n}\}$ with $\overline{c}_{1}\equiv\overline{a}_{1}\overline{b}_{1}\mod\pi_{1}$ and $\overline{c}_{1}\equiv\overline{a}_{1}\equiv\overline{b}_{1}\mod\pi_{1}$ for all $i\geq 2$. Then it is clear that any lifting $z'_{1}, z'_{2}, z'_{3}$ of $z_{1}\mod\pi_1$, $z_{2}\mod\pi_1$, $z_{3}\mod\pi_1$ in $\mathcal{K}^{\mathrm{Mil}}_{\log,n}(U)/\langle G\rangle$ has entries $c'_{i},a'_{i},b'_{i}$ for $i=1,\ldots, n$ that differ from the original entries $c_{i},a_{i},b_{i}$ by a factor of $1+\pi_{1}x$ with $x\in A$ (depending on $c_{i},a_{i},b_{i})$. This implies that $z_{1}+z_{2}+z_{3}$ is equivalent (modulo bilinear relations which vanish in $\mathcal{K}_{\log,n}^{\mathrm{Mil}}(U)/\langle G\rangle$) to a sum of symbols that all contain an entry $1+\pi_{1}x$, $x\in A$, and hence is in $T_{1}$.

Similarly, we can argue for the element $z_{0}=(a_{1},\ldots, a_{n})$ that becomes a Steinberg relation modulo $\pi_{1}$ there exists a lifting $z'_{0}$ which is a Steinberg relation itself (hence vanishes in $\mathcal{K}_{\log,n}^{\mathrm{Mil}}(U)/\langle G\rangle$) and where all entries $a'_{i}$ in $z'_{0}$ differ from $a_{i}$ by a factor $1+\pi_{1}x_{i}$. This shows that $z_{0}$ is equivalent to a sum of symbols that all contain an entry $1+\pi_{1}x$ with $x\in A$, hence is in $T_{1}$.

Now let us allow in $z_{0}=(a_{1},\ldots, a_{n})$ entries in $\underline{N}_{Y}(U)$ so each entry $a_{s}$ can be written as $a_{s}=g_{I_{s}}^{n_{I_{s}}}x_{s}$ with $x_{s}\in A^{\times}$. If $1\notin I_{S}$ then $\overline{a}_{s}\equiv\pi_{I_{s}}^{n_{I_{s}}}\overline{x}_{s}\mod\pi_{1}$. If $1\in I_{S}$ then 
\begin{align*}
\overline{a}_{s} & 
\equiv\left(\prod_{j=2}^{r}\pi_{j}\right)^{-n_{1}}\cdot\prod_{j\in I_{S}, j\neq 1}\pi_{I_{S}\setminus\{1\}}^{n_{I_{S}\setminus\{1\}}}\overline{x}_{s}\mod\pi_{1} \\
& \equiv\prod_{i\in I_{S}\setminus\{1\}}g_{i}^{n_{i}}\cdot\prod_{j=2}^{r}g_{j}^{-n_{1}}\overline{x}_{s}\mod\pi_{1}\,.
\end{align*}
This shows that $\overline{a}_{s}\mod\pi_{1}$ always has a lifting $a'_{s}=g_{I'_{S}}^{n_{I'_{S}}}x'_{s}$ with $1\notin I'_{S}$. It is clear that two liftings of $\overline{a}_{s}\mod\pi_{1}$ differ by a power of $G$ times a $1$-unit in in $A^{\times}$.

Now consider the relevant Steinberg relations in $\mathcal{K}_{n}^{\mathrm{Mil}}(k(\eta_{i}))$:

- $\{\overline{a}_{1},\ldots,\overline{a}_{n}\}$ where $\overline{a}_{s}=\pi_{I_{s}}^{n_{I_{s}}}\overline{x}_{s}$, $\overline{a}_{t}=1-\pi_{I_{s}}^{n_{I_{s}}}\overline{x}_{s}$, $1\notin I_{s}$.

- $\{\overline{b}_{1},\ldots,\overline{b}_{n}\}$ where $\overline{b}_{s}=\pi_{I_{s}}^{n_{I_{s}}}\overline{y}_{s}$, $\overline{b}_{t}=-\pi_{I_{s}}^{n_{I_{s}}}\overline{y}_{s}$, $1\notin I_{s}$, for some pair $(s,t)$ with $s\neq t$.

Then any two liftings $a_{s},a'_{s}$ of $\overline{a}_{s}$ where we can assume $a_{s}=g_{I_{s}}^{n_{I_{s}}}x_{s}$ differ by a factor $G^{k}(1+\pi_{1}t)$ with $k\in\mathbb{Z}, t\in A$. Likewise for $\overline{b}_{s}$, $\overline{a}_{t}$ and $\overline{b}_{t}$. We conclude that $z_{0}=(a_{1},\ldots,a_{n})$ (or $z_{0}=(b_{1},\ldots, b_{n})$) is equivalent modulo the Steinberg relations in $\mathcal{K}^{\mathrm{Mil}}_{\log,n}(U)/\langle G\rangle$ to a sum of symbols that all contain an entry $1+\pi_{1}x$, $x\in A$. If we consider a finite sum of symbols in the general case, we argue again that a lifting of a multilinear relation is equivalent to a sum of symbols that contain an entry $1+\pi_{1}x$, $x\in A$ as before. Hence $\sum z\in T_{1}$. 

Now let $T_{2}$ be the subgroup of $\mathcal{K}_{\log,n}^{\mathrm{Mil}}(U)/\langle G\rangle$ generated by symbols that contain two entries of the forms $(1+\pi_{1}\lambda_{1})$, $(1+\pi_{2}\lambda_{2})$ or an entry of the form $(1+\pi_{1}\pi_{2}\lambda_{12})$. By repeating the above argument in the $T_{1}$-case, we conclude that the condition ``$\sum z\mod\pi_{i}$ vanishes in $\mathcal{K}_{n}^{\mathrm{Mil}}(k(\eta_{i}))$ for $i=1,2$'' implies that $\sum z\in T_{2}$. By induction we conclude that our element $\sum z$ is a sum of symbols with entries $1+\pi_{I_{1}}^{k_{I_{1}}}x_{1},\ldots, 1+\pi_{I_{s}}^{k_{I_{s}}}x_{s}$ such that $I_{1}\cup\cdots\cup I_{s}=\{1,\ldots, r\}$.

Using the vanishing of the Steinberg relations in Definition \ref{log Milnor K-theory definition} it suffices to prove the following statement: the symbol $\{1+\pi_{I_{1}}^{k_{I_{1}}}x_{1},\ldots,1+\pi_{I_{n}}^{k_{I_{n}}}x_{n}\}$ vanishes in $\mathcal{K}_{\log,n}^{\mathrm{Mil}}(U)$ for $x_{i}\in A$, $\pi_{I_{j}}^{k_{j}}=\prod_{i\in I_{j}}\pi_{i}^{k_{i}}$, $k_{i}\geq 1$, and $I_{1}\cup\cdots\cup I_{s}=\{1,\ldots, r\}$, so all $\pi_{i}$ occur. By induction it suffices to show the following claim: for $x,y\in A$ we have
\begin{equation*}
\{1+\pi_{1}x,1+\pi_{2}y\}=\{1+\pi_{1}\pi_{2}z,z'\}+\{1+\pi_{1}\pi_{2}\tilde{z},\tilde{z}'\}
\end{equation*}
for some $z,\tilde{z}\in A$ and $z',\tilde{z}'\in\underline{N}_{Y}(U)^{\mathrm{gp}}$. Notice that
\begin{equation*}
\left\{\frac{1+\pi_{1}x+\pi_{1}x\pi_{2}y}{1+\pi_{1}x},?\right\}=\{1+\pi_{1}\pi_{2}\tilde{z},?\}
\end{equation*}
for $\tilde{z}=\frac{xy}{1+\pi_{1}x}\in A$, so it suffices to show the claim for the elements of the form $\{1+\pi_{1}x(1+\pi_{2}y),1+\pi_{2}y\}$ where $x,y\in A$. But
\begin{align*}
\{1+\pi_{1}x(1+\pi_{2}y),1+\pi_{2}y\}
& = -\{1+\pi_{1}x(1+\pi_{2}y),-g_{1}x\} \\
& = -\left\{(1+\pi_{1}x)\left(1+\frac{\pi_{1}x\pi_{2}y}{1+\pi_{1}x}\right),-g_{1}x\right\} \\
& = -\{1+\pi_{1}\pi_{2}z,z'\}
\end{align*}
where $z=\frac{xy}{1+\pi_{1}x}$ and $z'=-g_{1}x$. This proves the claim and the proposition.
\end{proof}

\subsection{The relation with $\mathbb{Z}_{\log}(n)$}
\ \par

We construct a map
\begin{equation*}
\mathcal{H}^{n}(\mathbb{Z}_{\log}(n))(U)\rightarrow\mathcal{K}_{\log,n}^{\mathrm{Mil}}(U)
\end{equation*}
as follows. First recall the construction of the map in the classical case when $U=\mathrm{Spec}\,F$ is the spectrum of a field, as defined in \cite[\S5, paragraph 31]{MVW06}. There is a commutative diagram
\begin{equation*}
\begin{tikzpicture}[descr/.style={fill=white,inner sep=1.5pt}]
        \matrix (m) [
            matrix of math nodes,
            row sep=3em,
            column sep=5em,
            text height=1.5ex, text depth=0.25ex
        ]
        { \mathbb{Z}_{\mathrm{tr}}(\mathbb{G}_{m}^{\wedge n})(\Delta^{1}_{F}) & \mathbb{Z}_{\mathrm{tr}}(\mathbb{G}_{m}^{\wedge n})(\mathrm{Spec}\,F) & H^{n}(\mathrm{Spec}\,F,\mathbb{Z}(n))  \\
      \ & \ & K_{n}^{\mathrm{Mil}}(F) \\};

        \path[overlay,dashed,->, font=\scriptsize] 
        (m-1-2) edge node[below left]{$f$} (m-2-3);
        
        \path[overlay,->, font=\scriptsize] 
        (m-1-1) edge node[above]{$\partial_{0}-\partial_{1}$}(m-1-2)
        (m-1-2) edge (m-1-3)
        (m-1-3) edge node[right]{$\lambda$}(m-2-3)
         ;

\end{tikzpicture}
\end{equation*}
The map $f$ is constructed using the norm map in Milnor $K$-theory. For the precise construction of $f$ see the more general case of a semi-local regular ring below. It follows from Weil reciprocity \cite[Theorem 5.4, Corollary 5.5]{MVW06} that the composite of $f$ with the face operators vanishes, hence $\lambda$ is well-defined for the spectrum of a field. Now by purity or the exactness of Gersten resolutions for motivic cohomology and Milnor $K$-theory (see the introduction of \cite{Ker09}) the above diagram also exists for $A$ a semi-local regular ring. So we have a diagram (for $V=\mathrm{Spec}\,A$)
\begin{equation*}
\begin{tikzpicture}[descr/.style={fill=white,inner sep=1.5pt}]
        \matrix (m) [
            matrix of math nodes,
            row sep=3em,
            column sep=5em,
            text height=1.5ex, text depth=0.25ex
        ]
        { \mathbb{Z}_{\mathrm{tr}}(\mathbb{G}_{m}^{\wedge n})(\Delta^{1}_{F}\times _{k}V) & \mathbb{Z}_{\mathrm{tr}}(\mathbb{G}_{m}^{\wedge n})(V) & \mathcal{H}^{n}(\mathbb{Z}(n))(V)  \\
      \ & \ & \mathcal{K}_{n}^{\mathrm{Mil}}(V) \\};

        \path[overlay,dashed,->, font=\scriptsize] 
        (m-1-2) edge node[below left]{$f$} (m-2-3);
        
        \path[overlay,->, font=\scriptsize] 
        (m-1-1) edge node[above]{$\partial_{0}-\partial_{1}$}(m-1-2)
        (m-1-2) edge (m-1-3)
        (m-1-3) edge node[right]{$\lambda$}(m-2-3)
         ;

\end{tikzpicture}
\end{equation*}
In fact, the norm map on Milnor $K$-groups is also defined for finite extensions of semi-local rings over a field, see \cite[paragraph 5]{Ker09}. We describe the map $f$ in this case explicitly. An element in $\mathbb{Z}_{\mathrm{tr}}(\mathbb{G}_{m}^{\wedge n})(V)$ is given by the class of a finite correspondence $Z\subseteq V\times\mathbb{G}_{m}^{n}$ such that $Z=\mathrm{Spec}\,B$ for a semi-local finite extension $B$ over $A$. Define $Z_{i}=\mathrm{pr}_{i}(Z)$ via the projection 
$$
\mathrm{pr}_{i}:V\times\mathbb{G}_{m}^{n}\rightarrow V\times\mathbb{A}^{n}_{k}\subseteq V\times(\mathbb{P}^{1}_{k})^{n}\rightarrow V\times\mathbb{P}_{k}^{1}\,.
$$
This is a finite correspondence on $V\times\mathbb{G}_{m}$. The projection map to $\mathbb{G}_{m}$ defines a function $a_{i}\in\mathcal{O}(Z_{i})^{\times}$. We may assume that $Z_{i}=\mathrm{Spec}\,B_{i}$ is the spectrum of a semi-local ring $B_{i}$ which is contained in $B$. Then $f(Z)=N_{B/A}(\{a_{1},\ldots,a_{n}\})$ where $N_{B/A}$ is the transfer map in Milnor $K$-theory for semi-local rings. That $f$ factors through $\lambda:\mathcal{H}^{n}(\mathbb{Z}(n))(V)\rightarrow\mathcal{K}_{n}^{\mathrm{Mil}}(V)$ follows from the main result in \cite[Theorem 1.1]{Ker09}. 

Now we promote this map to a map $\mathcal{H}^{n}(\mathbb{Z}_{\log}(n))(U)\rightarrow\mathcal{K}_{\log,n}^{\mathrm{Mil}}(U)$ where $U=\mathrm{Spec}\,A^{0}$ is local semistable, that is a lolcalisation of a semistable affine scheme $\mathrm{Spec}\,A/\pi_{1}\cdots\pi_{r}$ as at the beginning of \S\ref{log milnor section}. Let $V=U^{\mathrm{sm}}$ be the smooth locus of $U$ and $A^{0}\hookrightarrow C$ an injection into a semi-local regular ring such that $\mathrm{Spec}\,C$ is contained in $V$. Then we consider the above diagram for $U$:
\begin{equation*}
\begin{tikzpicture}[descr/.style={fill=white,inner sep=1.5pt}]
        \matrix (m) [
            matrix of math nodes,
            row sep=3em,
            column sep=5em,
            text height=1.5ex, text depth=0.25ex
        ]
        { \mathbb{Z}_{\mathrm{tr}}(\mathbb{A}_{k}^{\wedge n})^{\ast}(\Delta^{1}_{F}\times _{k}U) & \mathbb{Z}_{\mathrm{tr}}(\mathbb{A}_{k}^{\wedge n})^{\ast}(U) & \mathcal{H}^{n}(\mathbb{Z}_{\log}(n))(U)  \\
      \ & \ & \mathcal{K}_{\log,n}^{\mathrm{Mil}}(U) \\};

        \path[overlay,dashed,->, font=\scriptsize] 
        (m-1-2) edge node[below left]{$``f"$} (m-2-3);
        
        \path[overlay,->, font=\scriptsize] 
        (m-1-1) edge node[above]{$\partial_{0}-\partial_{1}$}(m-1-2)
        (m-1-2) edge (m-1-3)
        (m-1-3) edge node[right]{$``\lambda"$}(m-2-3)
         ;

\end{tikzpicture}
\end{equation*}
We will use an equivalent description of our restrictive assumptions of log-correspondences in $\mathrm{Cor}_{0}(U,\mathbb{A}^{n}_{k})^{\ast}$ defined in \S\ref{log-motivic cohomology section}. For $n=1$ this leads to a logarithmic version of the identification of $\mathcal{H}^{1}(\mathbb{Z}(1))$ and $\mathcal{O}^{\ast}$ in the smooth case proven in \cite[Remark 3.2.0]{SV00a}. An element in $\mathbb{Z}_{\mathrm{tr}}(\mathbb{A}_{k}^{n})^{\ast}(U)=\mathrm{Cor}_{0}(U,\mathbb{A}^{n}_{k})^{\ast}$ is given by the class of a finite log-correspondence $Z\subseteq U\times\mathbb{A}_{k}^{n}$ where $Z=\mathrm{Spec}\,B^{0}$ for a semi-local ring $B^{0}$ finite over $A^{0}$. Define $Z_{i}$ as $\mathrm{pr}_{i}(Z)$ via the projection 
$$
\mathrm{pr}_{i}:U\times\mathbb{A}_{k}^{n}\subseteq U\times(\mathbb{P}^{1}_{k})^{n}\rightarrow U\times\mathbb{P}_{k}^{1}\,.
$$
This defines a finite log-correspondence on $U\times\mathbb{A}^{1}_{k}$. The projection map to $\mathbb{A}^{1}_{k}$ defines a function $a_{i}\in\mathcal{O}(Z_{i})\subset B^{0}$ with $a_{i}|_{V_{i}}\in\mathcal{O}(V_{i})^{\times}$ where $V_{i}=Z_{i}\times_{U}V$. Then we can assume that there exists an injection $B^{0}\hookrightarrow B$ with $B$ semi-local regular and finite over $C$ such that $a_{i}\in B^{0}$ and $a_{i}\in B^{\ast}$. Then we have the transfer map in Milnor $K$-theory for semi-local rings $N_{B/C}(\{a_{1},\ldots, a_{n}\})\in K_{n}^{\mathrm{Mil}}(C)$. To get a norm map on $\mathcal{K}^{\mathrm{Mil}}_{\log, n}(B^{0})$ we use the more restrictive assumption of log-correspondences in $\mathrm{Cor}_{0}(U,\mathbb{A}_{k}^{n})^{\ast}$, namely we require that $a_{i}$ is in the group associated to the multiplicative monoid of $B^{0}$ generated by $g_{i}\in A^{0}$ and $\mathcal{O}(B^{0})^{\ast}$. By standard properties in Milnor $K$-theory (bilinearity etc) we may assume that for the symbol $\{a_{1},\ldots, a_{n}\}$ we have $a_{1}=g_{I}^{n_{I}}$ and $a_{2},\ldots, a_{n}\in\mathcal{O}(B^{0})^{\times}$, or $a_{i}\in\mathcal{O}(B^{0})^{\ast}$ for all $i$. Then define 
$$
N_{B^0/A^0}(\{g_{I}^{n_{I}},a_{2},\ldots, a_{n}\}):=\{g_{I}^{n_{I}},N_{B^{0}/A^{0}}(\{a_{2},\ldots,a_{n}\})\}
$$
where $N_{B^{0}/A^{0}}$ is the norm map on usual Milnor $K$-theory of finite extensions of semi-local rings. Likewise, if $a_{1},\ldots, a_{n}\in\mathcal{O}(B^{0})^{\times}$ then $N_{B^0/A^0}(\{a_{1},a_{2},\ldots, a_{n}\})$ is defined by the transfer map in Milnor $K$-theory. So we can define $f(Z)=N_{B^0/A^0}(\{a_1,\ldots,a_n\})\in\mathcal{K}^{\mathrm{Mil}}_{\log,n}(A^0)$. 

We have a commutative diagram
\begin{equation*}
\begin{tikzpicture}[descr/.style={fill=white,inner sep=1.5pt}]
        \matrix (m) [
            matrix of math nodes,
            row sep=3em,
            column sep=5em,
            text height=1.5ex, text depth=0.25ex
        ]
        { \mathcal{K}_{\log,n}^{\mathrm{Mil}}(B^0) & \mathcal{K}_{n}^{\mathrm{Mil}}(B)  \\
      \mathcal{K}_{\log,n}^{\mathrm{Mil}}(A^0) & \mathcal{K}_{n}^{\mathrm{Mil}}(C) \\};

        \path[overlay,->, font=\scriptsize] 
        (m-1-1) edge (m-1-2)
        (m-2-1) edge (m-2-2)
        (m-1-1) edge node[left]{$N_{B^{0}/A^{0}}$} (m-2-1)
        (m-1-2) edge node[right]{$N_{B/C}$} (m-2-2)
         ;

\end{tikzpicture}
\end{equation*}
where the upper and lower horizontal arrows are defined as in Proposition \ref{K-theory smooth locus} with kernels generated by $\langle G\rangle$. So we have defined the map 
\begin{equation*}
``f":\mathbb{Z}_{\mathrm{tr}}(\mathbb{A}_{k}^{\wedge n})^{\ast}(U)\rightarrow\mathcal{K}_{\log,n}^{\mathrm{Mil}}(U)
\end{equation*}
in the above diagram. Later on we will show that $``f"$ factors through 
\begin{equation*}
``\lambda":\mathcal{H}^{n}(\mathbb{Z}_{\log}(n))(U)\rightarrow \mathcal{K}_{\log,n}^{\mathrm{Mil}}(U)
\end{equation*}
as suggested in the diagram.

Conversely, let $z=\{z_{1}\otimes\cdots\otimes z_{n}\}\in\mathcal{K}_{\log,n}^{\mathrm{Mil}}(U)$ where $z_{i}\in\underline{N}^{gp}_{Y}(U)\simeq\mathbb{Z}_{\log}(1)[1](U)$. There is a product map 
\begin{equation*}
\mathbb{Z}_{\log}(1)[1]\otimes\cdots\otimes\mathbb{Z}_{\log}(1)[1]\rightarrow\mathbb{Z}_{\log}(n)[n]
\end{equation*}
defined in \cite[page 141]{SV00a} or \cite[Construction 3.11]{MVW06}. This defines a class $[z]\in\mathcal{H}^{n}(\mathbb{Z}_{\log}(n))(U)$. We will also prove later that $z\mapsto[z]$ factors through $\mathcal{K}_{\log,n}^{\mathrm{Mil}}(U)$.

Consider the commutative diagram
\begin{equation}
\label{Cor diagram}
\begin{tikzpicture}
\node (a) at (0,0) {$\mathrm{Cor}_{0}(\Delta^{n+1-i}\times U,\mathbb{A}^{n}_{k})^{\ast}$};
\node (b) at (4.5,0) {$\mathrm{Cor}_{0}(\Delta^{n-i}\times U,\mathbb{A}^{n}_{k})^{\ast}$};
\node (c) at (9,0) {$\mathrm{Cor}_{0}(\Delta^{n-1-i}\times U,\mathbb{A}^{n}_{k})^{\ast}$};
\node (e) at (0,-2.5) {$ \mathrm{Cor}(\Delta^{n+1-i}\times V,\mathbb{G}^{n}_{m}) $};
\node (f) at (4.5,-2.5) {$ \mathrm{Cor}(\Delta^{n-i}\times V,\mathbb{G}^{n}_{m}) $};
\node (g) at (9,-2.5) {$ \mathrm{Cor}(\Delta^{n-1-i}\times V,\mathbb{G}^{n}_{m}) $};
\path[->,font=\scriptsize]
(a) edge[draw=none] node [sloped, auto=false,              allow upside down] {$\vdots$} (b)
([yshift= 10pt]a.east) edge node[above] {$\partial^{n-i}_0$} ([yshift= 10pt]b.west)
([yshift= 5pt]a.east) edge ([yshift= 5pt]b.west)
([yshift= -10pt]a.east) edge node[below] {$\partial^{n-i}_{n+1-i}$} ([yshift= -10pt]b.west)
(b) edge[draw=none] node [sloped, auto=false,              allow upside down] {$\vdots$} (c)
([yshift= 10pt]b.east) edge node[above] {$\partial^{n-1-i}_{0}$} ([yshift= 10pt]c.west)
([yshift= 5pt]b.east) edge ([yshift= 5pt]c.west)
([yshift= -10pt]b.east) edge node[below] {$\partial^{n-1-i}_{n-i}$} ([yshift= -10pt]c.west)
(e) edge[draw=none] node [sloped, auto=false,              allow upside down] {$\vdots$} (f)
([yshift= 10pt]e.east) edge node[above] {$\partial^{n-i}_{0}$} ([yshift= 10pt]f.west)
([yshift= 5pt]e.east) edge ([yshift= 5pt]f.west)
([yshift= -10pt]e.east) edge node[below] {$\partial^{n-i}_{n+1-i}$} ([yshift= -10pt]f.west)
(f) edge[draw=none] node [sloped, auto=false,              allow upside down] {$\vdots$} (g)
([yshift= 10pt]f.east) edge node[above] {$\partial^{n-1-i}_{0}$} ([yshift= 10pt]g.west)
([yshift= 5pt]f.east) edge ([yshift= 5pt]g.west)
([yshift= -10pt]f.east) edge node[below] {$\partial^{n-1-i}_{n-i}$} ([yshift= -10pt]g.west);
\path[->,font=\scriptsize]
(a) edge node [right]{$\iota_{n+1-i}$} (e)
(b) edge node [right]{$\iota_{n-i}$} (f)
(c) edge node [right]{$\iota_{n-1-i}$} (g);
\end{tikzpicture}
\end{equation}
with natural restriction maps $\iota_{j}$ sending $\alpha$ to $\alpha|_{V\times\mathbb{G}_{m}^{n}}$. We will manipulate these restriction maps $\iota_{j}$ in order to make them compatible with the canonical map $\mu$ on Milnor $K$-groups defined in Proposotion \ref{K-theory smooth locus}. We define maps $\tilde{\iota}_{j}$ as follows:

If for a finite log-correspondence $Z$ all $p_{2}(Z_{i})$ (where $p_{2}:Z_{i}\rightarrow\mathbb{A}^{1}_{k}$ is the projection map) are in $\mathcal{O}(Z_{i})^{\ast}$ we define $\tilde{\iota}_{j}(Z)=\iota_{j}(Z)$ as before. Now let $p_{2}(Z_{i})=g_{s}$ for some $1\leq i\leq n$ and some $1\leq s\leq r$ and $p_{2}(Z_{k})\in\mathcal{O}(Z_{k})^{\ast}$ for $k\neq i$. Note that $V$ is the disjoint union of smooth components $V_{l}$, for $1\leq l\leq r$. Then define 
$$
{\tilde{\iota}_{j}}|_{V_{l}}=\begin{cases} 
      {\iota_{j}(Z)}|_{V_{l}} & \text{if }l\neq s \\
      T_{i}\circ({\iota_{j}(Z)}|_{V_{s}}) & \text{for }l=s 
   \end{cases}
$$
where $T_{i}$ is the map induced by the map $T_{i}:\mathbb{G}_{m}^{n}\rightarrow\mathbb{G}_{m}^{n}$ given by $(x_{1},\ldots, x_{n})\mapsto(x_{1},\ldots, x_{i}^{-1},\ldots, x_{n})$. Extend the map linearly to all $Z\in\mathrm{Cor}_{0}(\Delta^{j}\times_{k}U,\mathbb{A}^{n}_{k})^{\ast}$. It then follows that $\tilde{\iota}_{j}:=T\circ\iota_{j}$, where $T$ is defined as above, maps any element $Z$ such that $p_{2}(Z_{i})\in\mathcal{O}(Z_{i})$ is a power of $G=\prod_{t=1}^{r}g_{t}$ for some $i$ to the image $E$ generated by the inclusions $\epsilon_{i}:\Delta^{j}\times V\times\mathbb{G}^{n-1}_{m}\rightarrow\Delta^{j}\times V\times\mathbb{G}_{m}^{n}$ given by $(x_{1},\ldots, x_{n-1})\mapsto (x_{1},\ldots, 1,\ldots, x_{n-1})$, hence vanishes in $\mathrm{Cor}(\Delta^{j}\times V,\mathbb{G}^{\wedge n}_{m})$. From our considerations on $\mathcal{K}_{\log, n}^{\mathrm{Mil}}$ we know that any other irreducible log-correspondence $Z$ such that its image under $\tilde{\iota}_{j}$ lies in $E$ is such that for certain $i\in{1,\ldots, n}$ $p_{2}(Z_{i})$ defines a $1$-unit $1+\pi_{I_{i}}^{k_{I_{i}}}x$ with $x\in\mathcal{O}(Z_{i})$ and such that $\bigcup_{i} I_{i}=\{1,\ldots, r\}$. We will show that all such $Z$ vanish in the cohomology. Note that $T$ induces an endomorphism on $\mathrm{Cor}(\Delta^{j}\times V,\mathbb{G}^{\wedge n}_{m})$ which maps $\iota_{j}(Z)$, where $Z$ is a correspondence such that $p_{2}(Z_{i})\in\mathcal{O}(Z_{i})$ is a power of $G$, to zero.
\begin{prop}
\label{motivic smooth locus}
For each $i,n\geq 0$, the modified restriction maps $\tilde{\iota}_{j}$ define corresponding restriction maps on the homotopy-invariant sheaf with transfers $\mathcal{H}^{n-j}(\mathbb{Z}_{\log}(n))$ from $U$ to $V=U^{\mathrm{sm}}$ with kernel $\ker\tilde{\iota}_{j}$ generated by the cup-product of $\mathcal{H}^{n-j-1}(\mathbb{Z}_{\log}(n-1))(U)$ with $G\in\mathcal{H}^{1}(\mathbb{Z}_{\log}(1))(U)$. For $i=n=1$, we recover the canonical map $\underline{N}^{\mathrm{gp}}_{Y}(U)\rightarrow\mathcal{O}_{Y^{\mathrm{sm}}}^{\times}(V)=\mathcal{H}^{1}(\mathbb{Z}_{\log}(1))(V)$ defined in Proposition \ref{K-theory smooth locus}.
\end{prop}
\begin{proof}
Consider the commutative diagram
\begin{equation}
\label{Cor diagram}
\begin{tikzpicture}
\node (a) at (0,0) {$\mathrm{Cor}_{0}(\Delta^{n+1-i}\times U,\mathbb{A}^{\wedge n}_{k})^{\ast}$};
\node (b) at (4.5,0) {$\mathrm{Cor}_{0}(\Delta^{n-i}\times U,\mathbb{A}^{\wedge n}_{k})^{\ast}$};
\node (c) at (9,0) {$\mathrm{Cor}_{0}(\Delta^{n-1-i}\times U,\mathbb{A}^{\wedge n}_{k})^{\ast}$};
\node (e) at (0,-2.5) {$ \mathrm{Cor}(\Delta^{n+1-i}\times V,\mathbb{G}^{\wedge n}_{m}) $};
\node (f) at (4.5,-2.5) {$ \mathrm{Cor}(\Delta^{n-i}\times V,\mathbb{G}^{\wedge n}_{m}) $};
\node (g) at (9,-2.5) {$ \mathrm{Cor}(\Delta^{n-1-i}\times V,\mathbb{G}^{\wedge n}_{m}) $};
\path[->,font=\scriptsize]
(a) edge[draw=none] node [sloped, auto=false,              allow upside down] {$\vdots$} (b)
([yshift= 10pt]a.east) edge node[above] {$\partial^{n-i}_0$} ([yshift= 10pt]b.west)
([yshift= 5pt]a.east) edge ([yshift= 5pt]b.west)
([yshift= -10pt]a.east) edge node[below] {$\partial^{n-i}_{n+1-i}$} ([yshift= -10pt]b.west)
(b) edge[draw=none] node [sloped, auto=false,              allow upside down] {$\vdots$} (c)
([yshift= 10pt]b.east) edge node[above] {$\partial^{n-1-i}_{0}$} ([yshift= 10pt]c.west)
([yshift= 5pt]b.east) edge ([yshift= 5pt]c.west)
([yshift= -10pt]b.east) edge node[below] {$\partial^{n-1-i}_{n-i}$} ([yshift= -10pt]c.west)
(e) edge[draw=none] node [sloped, auto=false,              allow upside down] {$\vdots$} (f)
([yshift= 10pt]e.east) edge node[above] {$\partial^{n-i}_{0}$} ([yshift= 10pt]f.west)
([yshift= 5pt]e.east) edge ([yshift= 5pt]f.west)
([yshift= -10pt]e.east) edge node[below] {$\partial^{n-i}_{n+1-i}$} ([yshift= -10pt]f.west)
(f) edge[draw=none] node [sloped, auto=false,              allow upside down] {$\vdots$} (g)
([yshift= 10pt]f.east) edge node[above] {$\partial^{n-1-i}_{0}$} ([yshift= 10pt]g.west)
([yshift= 5pt]f.east) edge ([yshift= 5pt]g.west)
([yshift= -10pt]f.east) edge node[below] {$\partial^{n-1-i}_{n-i}$} ([yshift= -10pt]g.west);
\path[->,font=\scriptsize]
(a) edge node [right]{$\tilde{\iota}_{n+1-i}$} (e)
(b) edge node [right]{$\tilde{\iota}_{n-i}$} (f)
(c) edge node [right]{$\tilde{\iota}_{n-1-i}$} (g);
\end{tikzpicture}
\end{equation}
with vertical maps $\tilde{\iota}_{j}$. Take two elements $Z,Z'\in\ker\left(\sum_{j=0}^{n-i}(-1)^{j}\partial_{j}^{n-1-i}\right)\subset\mathrm{Cor}_{0}(\Delta^{n-i}\times U,\mathbb{A}_{k}^{\wedge n})^{\ast}$ such that $Z-Z'$ is not in the subgroup $\Gamma$ of correspondences $Q$ such that $p_{2}(Q_{t})$ is a power of $G$ for some $t$. Assume that there exists $Y\in\mathrm{Cor}(\Delta^{n+1-i}\times V,\mathbb{G}_{m}^{\wedge n})$ such that 
\begin{equation*}
\tilde{\iota}_{n-i}(Z)-\tilde{\iota}_{n-i}(Z')=\sum_{j=0}^{n+1-i}(-1)^{j}\partial_{j}^{n-i}(Y)\,.
\end{equation*}
Define $\overline{Y}$ to be the closure of $T(Y)$ in $\Delta^{n+1-i}\times U\times\mathbb{P}_{k}^{n}$. 

We claim that $\overline{Y}\in\mathrm{Cor}_{0}(\Delta^{n+1-i}\times U,\mathbb{A}_{k}^{\wedge n})^{\ast}$. Indeed, suppose that the support of $\overline{Y}$ is not contained in $\Delta^{n+1-i}\times U\times\mathbb{A}_{k}^{n}$. Then there exists a $t\in\{1,\ldots,n\}$ such that $\mathrm{pr}_{t}(\overline{Y})\subset \Delta^{n+1-i}\times U\times\mathbb{P}_{k}^{1}$ is not contained in $\Delta^{n+1-i}\times U\times\mathbb{A}_{k}^{1}$ (where $\mathrm{pr}_{t}$ is induced by the $t$-th projection $\mathrm{pr}_{t}:(\mathbb{P}_{k}^{1})^{\times n}\rightarrow\mathbb{P}_{k}^{1}$). On the other hand, $\mathrm{pr}_{t}(\overline{Y})$ is the closure of $\mathrm{pr}_{t}(T(Y))\in\mathrm{Cor}(\Delta^{n+1-i}\times V,\mathbb{G}^{\wedge 1}_{m})$. The analogous commutative diagram to \eqref{Cor diagram} 
\begin{equation*}
\begin{tikzpicture}
\node (a) at (0,0) {$\mathrm{Cor}_{0}(\Delta^{n+1-i}\times U,\mathbb{A}^{\wedge 1}_{k})^{\ast}$};
\node (b) at (4.5,0) {$\mathrm{Cor}_{0}(\Delta^{n-i}\times U,\mathbb{A}^{\wedge 1}_{k})^{\ast}$};
\node (c) at (9,0) {$\mathrm{Cor}_{0}(\Delta^{n-1-i}\times U,\mathbb{A}^{\wedge 1}_{k})^{\ast}$};
\node (e) at (0,-2.5) {$ \mathrm{Cor}(\Delta^{n+1-i}\times V,\mathbb{G}^{\wedge 1}_{m}) $};
\node (f) at (4.5,-2.5) {$ \mathrm{Cor}(\Delta^{n-i}\times V,\mathbb{G}^{\wedge 1}_{m}) $};
\node (g) at (9,-2.5) {$ \mathrm{Cor}(\Delta^{n-1-i}\times V,\mathbb{G}^{\wedge 1}_{m}) $};
\path[->,font=\scriptsize]
(a) edge[draw=none] node [sloped, auto=false,              allow upside down] {$\vdots$} (b)
([yshift= 10pt]a.east) edge node[above] {$\partial^{n-i}_0$} ([yshift= 10pt]b.west)
([yshift= 5pt]a.east) edge ([yshift= 5pt]b.west)
([yshift= -10pt]a.east) edge node[below] {$\partial^{n-i}_{n+1-i}$} ([yshift= -10pt]b.west)
(b) edge[draw=none] node [sloped, auto=false,              allow upside down] {$\vdots$} (c)
([yshift= 10pt]b.east) edge node[above] {$\partial^{n-1-i}_{0}$} ([yshift= 10pt]c.west)
([yshift= 5pt]b.east) edge ([yshift= 5pt]c.west)
([yshift= -10pt]b.east) edge node[below] {$\partial^{n-1-i}_{n-i}$} ([yshift= -10pt]c.west)
(e) edge[draw=none] node [sloped, auto=false,              allow upside down] {$\vdots$} (f)
([yshift= 10pt]e.east) edge node[above] {$\partial^{n-i}_{0}$} ([yshift= 10pt]f.west)
([yshift= 5pt]e.east) edge ([yshift= 5pt]f.west)
([yshift= -10pt]e.east) edge node[below] {$\partial^{n-i}_{n+1-i}$} ([yshift= -10pt]f.west)
(f) edge[draw=none] node [sloped, auto=false,              allow upside down] {$\vdots$} (g)
([yshift= 10pt]f.east) edge node[above] {$\partial^{n-1-i}_{0}$} ([yshift= 10pt]g.west)
([yshift= 5pt]f.east) edge ([yshift= 5pt]g.west)
([yshift= -10pt]f.east) edge node[below] {$\partial^{n-1-i}_{n-i}$} ([yshift= -10pt]g.west);
\path[->,font=\scriptsize]
(a) edge node [right]{$\tilde{\iota}_{n+1-i}$} (e)
(b) edge node [right]{$\tilde{\iota}_{n-i}$} (f)
(c) edge node [right]{$\tilde{\iota}_{n-1-i}$} (g);
\end{tikzpicture}
\end{equation*}
shows that
\begin{equation}
\label{formula}
\sum_{j=0}^{n+1-i}(-1)^{j}\partial_{j}^{n-i}(\mathrm{pr}_{t}(\overline{Y}))=\mathrm{pr}_{t}(Z-Z')=\mathrm{pr}_{t}(Z)-\mathrm{pr}_{t}(Z')\,.
\end{equation}
Indeed, for $n=i$, $\mathrm{pr}_{t}(Z)$ and $\mathrm{pr}_{t}(Z')$ are, up to correspondences defining a power of $G$, the closures of $\mathrm{pr}_{t}(T(\tilde{\iota}(Z)))$ and $\mathrm{pr}_{t}(T(\tilde{\iota}(Z')))$. The exact sequence
$$
1\rightarrow\langle G\rangle\rightarrow\underline{N}_{Y}^{\mathrm{gp}}(U)\rightarrow\mathcal{O}^{\times}_{Y^{\mathrm{sm}}}(V)
$$
implies that $\mathrm{pr}_{t}(Z)$ and $\mathrm{pr}_{t}(Z')$ in $\mathrm{Cor}_{0}(U,\mathbb{A}^{1}_{k})^{\ast}$ define the same cohomology class modulo log-correspondences corresponding to a power of $G$. Since a non-trivial power of $G$ defines a non-trivial cohomology class, we have 
$$
(\partial_{1}^{1}-\partial_{0}^{1})(\mathrm{pr}_{t}(\overline{Y})=\mathrm{pr}_{t}(Z)-\mathrm{pr}_{t}(Z')
$$
and we conclude that $\mathrm{pr}_{t}(\overline{Y}\in\mathrm{Cor}(\Delta^{1}\times U,\mathbb{A}^{\wedge 1}_{k})^{\ast}$. For $i<n$ the cohomology of both complexes (for $\mathbb{A}^{1}_{k}$ and $\mathbb{G}_{m}$) vanishes because the complexes $\mathbb{Z}_{\log}(1)$ and $\mathbb{Z}(1)$ are acyclic in degrees $<1$ (Proposition \ref{Z(1)}). Hence $\mathrm{pr}_{t}(Z)$ and $\mathrm{pr}_{t}(Z')$ in $\mathrm{Cor}_{0}(\Delta^{n-1}\times U,\mathbb{A}^{\wedge 1}_{k})^{\ast}$ vanish in the cohomology and the formula \eqref{formula} then implies that $\mathrm{pr}_{t}(\overline{Y})\in\mathrm{Cor}_{0}(\Delta^{n+i-1}\times U,\mathbb{A}^{\wedge 1}_{k})$. Hence we conclude that $\overline{Y}$ is closed in $\Delta^{n+1-i}\times U\times\mathbb{A}^{n}_{k}$ and hence proper over $\Delta^{n+1-i}\times U$, because the projection $\Delta^{n+1-i}\times U\times(\mathbb{P}_{k}^{1})^{\times n}\rightarrow\Delta^{n+i-1}\times U$ is proper. Since all $\mathrm{pr}_{t}(\overline{Y})$ are quasi-finite over $\Delta^{n+1-i}\times U$ for all $t$, $\overline{Y}$ is itself quasi-finite over $\Delta^{n+i-1}\times U$, hence finite. This shows the claim.

The above argument also shows that an element $Z\in\mathrm{Cor}(\Delta^{n-i}\times U,\mathbb{A}^{\wedge n}_{k})^{\ast}$ is modulo the subgroup $\Gamma$ the closure of $T\circ\tilde{\iota}_{n-1}(Z)\in\mathrm{Cor}(\Delta^{n-i}\times V,\mathbb{G}^{\wedge n}_{m})$ and therefore the vertical maps in \eqref{Cor diagram} are injective modulo $\Gamma$. This proves the proposition. If $Z-Z'$ is contained in $\Gamma$ then it is clear that its cohomology class lies in the cup-product of $\mathcal{H}^{n-i-1}(\mathbb{Z}_{\log}(n-i))$ with a power of $G$.
\end{proof}
\begin{rem}\label{mod p coefficients}
Proposition \ref{motivic smooth locus} shows that we have an exact sequence
\begin{equation*}
\{G\}\cup\mathcal{H}^{i-1}(\mathbb{Z}_{\log,Y}(n-1)\otimes^{\mathbb{L}}\mathbb{Z}/p^{r})\rightarrow\mathcal{H}^{i}(\mathbb{Z}_{\log,Y}(n)\otimes^{\mathbb{L}}\mathbb{Z}/p^{r})\rightarrow u_{\ast}\mathcal{H}^{i}(\mathbb{Z}_{Y^{\mathrm{sm}}}(n)\otimes^{\mathbb{L}}\mathbb{Z}/p^{r})
\end{equation*}
Since all of the terms of $\mathbb{Z}_{\log,Y}(n)$, $\mathbb{Z}_{\log,Y}(n-1)$ and $\mathbb{Z}_{Y^{\mathrm{sm}}}(n)$ are free abelian groups, the complexes $\mathbb{Z}_{\log,Y}(n)\otimes\mathbb{Z}/p^{r}$, $\mathbb{Z}_{\log,Y}(n-1)\otimes\mathbb{Z}/p^{r}$ and $\mathbb{Z}_{Y^{\mathrm{sm}}}(n)\otimes\mathbb{Z}/p^{r}$ represent the derived tensor products $\mathbb{Z}_{\log,Y}(n)\otimes^{\mathbb{L}}\mathbb{Z}/p^{r}$, $\mathbb{Z}_{\log,Y}(n-1)\otimes^{\mathbb{L}}\mathbb{Z}/p^{r}$ and $\mathbb{Z}_{Y^{\mathrm{sm}}}(n)\otimes^{\mathbb{L}}\mathbb{Z}/p^{r}$. At one point in the proof of Proposition \ref{motivic smooth locus}, we argue by projecting down to $\mathbb{P}^{1}_{k}$ and use acyclicity of $\mathbb{Z}_{\log}(1)$ and $\mathbb{Z}(1)$ for $i<n-1$ and injectivity modulo correspondences defined by powers of $G$ for $n=i-1$. The short exact sequence $0\rightarrow\mathbb{Z}\rightarrow\mathbb{Z}\rightarrow\mathbb{Z}/p^{r}\rightarrow 0$ shows that this remains true after tensoring with $\mathbb{Z}/p^{r}$. The rest of the proof remains the same.
\end{rem}

In Proposition \ref{motivic smooth locus} we implicitly used the following lemma:

\begin{lemma}
\label{new lemma}
Let $U$ be affine semistable over $k$ as before.
\begin{enumerate}[(i)]
\item Let $a\in\mathcal{O}^{\times}(U)\subset\mathcal{H}^{1}(\mathbb{Z}_{\log}(1))(U)$ be such that $1-a\in\mathcal{O}^{\times}(U)$. Then $a\cup(1-a)$ vanishes in $\mathcal{H}^{2}(\mathbb{Z}_{\log}(2))(U)$.

\item Assume $Z$ is a log-correspondence in $\mathrm{Cor}_{0}(U,\mathbb{A}^{n}_{k})^{\ast}$ such that for certain $i\in\{1,\ldots, n\}$ we have $p_{2}(Z_{i})=1+\pi_{I_{i}}^{k_{I_{i}}}x_{i}$ with $x_{i}\in\mathcal{O}(U)$ and such that $\cup_{i}I_{i}=\{1,\ldots, r\}$. Then the class of $Z$ vanishes in $\mathcal{H}^{n}(\mathbb{Z}_{\log}(n))(U)$.

\item Assume $Z$ is a log-correspondence such that for some $i,j$ with $i\neq j$ we have $p_{2}(Z_{i})=1-\pi_{I}^{k_{I}}x$ and $p_{2}(Z_{j})=g_{I}^{k_{I}}x$ for some $I\subseteq\{1,\ldots\}$, $k_{I}\in\mathbb{N}^{(I)}$ and $x\in\mathcal{O}^{\times}(U)$. Then the class of $Z$ vanishes in $\mathcal{H}^{n}(\mathbb{Z}_{\log}(n))(U)$. 
\end{enumerate}
\end{lemma}
\begin{proof}
The proof of (i) is very similar to the proof of \cite[Proposition 5.9]{MVW06} and is omitted. For (iii) it is clear that $\tilde{\iota}(Z)$ corresponds, under the isomorphism $\mathcal{H}^{n}(\mathbb{Z}(n))(U^{\mathrm{sm}})\cong\mathcal{K}_{n}^{\mathrm{Mil}}(U)$ to a Steinberg relation and hence vanishes; it is in the image of $\partial_{0}-\partial_{1}$. The proof of Proposition \ref{motivic smooth locus} then shows that $Z$ vanishes in $\mathcal{H}^{n}(\mathbb{Z}_{\log}(n))(U)$. 

For part (ii) we adopt certain arguments in the proof of Proposition \ref{K-theory smooth locus}. By induction it suffices to show that if $Z$ is a correspondence with $p_{2}(Z_{i})=1+\pi_{1}x_{1}$ and $p_{2}(Z_{j})=1+\pi_{2}x_{2}$ for some $(i,j)$ with $i\neq j$, and some $x_{1},x_{2}\in\mathcal{O}(U)$, then the class of $Z$ is the sum of classes of correspondences $Q,P$ with $p_{2}(Q_{i})=1+\pi_{1}\pi_{2}y_{12}$ and $p_{2}(P_{i})=1+\pi_{1}\pi_{2}\tilde{y}_{12}$ with $y_{12},\tilde{y}_{12}\in\mathcal{O}(U)$. But this is achieved using the vanishing properties in (i) and (iii) and following the reasoning in the proof of Proposition \ref{K-theory smooth locus}.
\end{proof}

As a corollary of Proposition \ref{motivic smooth locus}, we obtain
\begin{thm}
\label{K-theory and motivic}
If $k$ is infinite then there is a canonical isomorphism
\begin{equation*}
\mathcal{K}_{\log,n}^{\mathrm{Mil}}(U)\rightarrow\mathcal{H}^{n}(\mathbb{Z}_{\log}(n))(U)\,.
\end{equation*}
\end{thm}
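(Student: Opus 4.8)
The two maps required for the isomorphism have already been produced above: the construction preceding the statement gives a homomorphism $\lambda\colon\mathcal{H}^{n}(\mathbb{Z}_{\log}(n))(U)\to\mathcal{K}_{\log,n}^{\mathrm{Mil}}(U)$, and the product map on the weight-one complexes gives a homomorphism $\mu\colon\mathcal{K}_{\log,n}^{\mathrm{Mil}}(U)\to\mathcal{H}^{n}(\mathbb{Z}_{\log}(n))(U)$, $\{z_{1},\dots,z_{n}\}\mapsto[z_{1}\otimes\cdots\otimes z_{n}]$. The plan is simply to verify that $\lambda$ and $\mu$ are mutually inverse, so that the asserted canonical isomorphism is $\mu$ (with inverse $\lambda$).

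The whole argument is reduced to the smooth locus. Put $V=U^{\mathrm{sm}}$, so that $\mathbb{Z}_{\log}(n)$ restricts on $V$ to the Suslin--Voevodsky complex $\mathbb{Z}(n)$ and $\mathcal{H}^{n}(\mathbb{Z}_{\log}(n))(V)=\bigoplus_{\eta\in U^{0}}\mathcal{H}^{n}(\mathbb{Z}(n))(V_{\eta})=\bigoplus_{\eta\in U^{0}}\mathcal{K}_{n}^{\mathrm{Mil}}(V_{\eta})$ by the smooth case. First I would observe that $\lambda$ and $\mu$ are compatible with restriction to $V$: the formula $Z\mapsto(n_{i},(-1)^{n_{i}}f_{i}(0))$ defining $\lambda$ is the very formula that defines the classical $\lambda$ of \cite[Lemmas 4.4 and 4.5]{MVW06} on $V$, and $\mu$ is induced by the product map $\mathbb{Z}_{\log}(1)[1]^{\otimes n}\to\mathbb{Z}_{\log}(n)[n]$, whose restriction to $V$ is the usual product $\mathbb{Z}(1)[1]^{\otimes n}\to\mathbb{Z}(n)[n]$. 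Hence on each component $V_{\eta}$ the maps $\lambda|_{V}$ and $\mu|_{V}$ are exactly the Nesterenko--Suslin--Totaro comparison $\mathcal{K}_{n}^{\mathrm{Mil}}(V_{\eta})\xrightarrow{\ \sim\ }\mathcal{H}^{n}(\mathbb{Z}(n))(V_{\eta})$ and its inverse (in the sheaf-theoretic form of Kerz \cite[Theorem 1.1]{Ker09}), so that $\lambda|_{V}\circ\mu|_{V}=\mathrm{id}$ and $\mu|_{V}\circ\lambda|_{V}=\mathrm{id}$.

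Now I would invoke the two injectivity results: by Proposition \ref{K-theory smooth locus} the restriction $\mathcal{K}_{\log,n}^{\mathrm{Mil}}(U)\hookrightarrow\bigoplus_{\eta}\mathcal{K}_{n}^{\mathrm{Mil}}(V_{\eta})$ is injective, and by Proposition \ref{motivic smooth locus} the restriction $\mathcal{H}^{n}(\mathbb{Z}_{\log}(n))(U)\hookrightarrow\mathcal{H}^{n}(\mathbb{Z}(n))(V)$ is injective. Chasing the two squares expressing the compatibility of $\lambda$ and $\mu$ with these restrictions, and using that the restricted maps are mutually inverse, gives $\lambda\circ\mu=\mathrm{id}_{\mathcal{K}_{\log,n}^{\mathrm{Mil}}(U)}$ and $\mu\circ\lambda=\mathrm{id}_{\mathcal{H}^{n}(\mathbb{Z}_{\log}(n))(U)}$, which is the theorem.

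The only genuinely delicate point --- and it is settled by the same mechanism --- is the well-definedness of $\mu$, i.e. that the log-Steinberg relations of Remark \ref{remark}, including the ``mixed'' ones $(1-a\pi_{J}^{k_{J}})\otimes a e_{J}^{k_{J}}$ that involve the divisor parameters $\pi_{i}$, really die in $\mathcal{H}^{n}(\mathbb{Z}_{\log}(n))(U)$. By the injectivity of Proposition \ref{motivic smooth locus} it suffices to check the vanishing after restriction to $V$, where $\pi_{i}$ becomes a unit, $u:=a\pi_{J}^{k_{J}}$ and $1-u$ are units, and the relation becomes the ordinary Steinberg relation $\{1-u,u\}=0$ in $\mathcal{K}_{n}^{\mathrm{Mil}}(V_{\eta})=\mathcal{H}^{n}(\mathbb{Z}(n))(V_{\eta})$. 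I expect this descent of the Steinberg relations (and, more generally, the careful matching of $\lambda|_{V}$ and $\mu|_{V}$ with the Nesterenko--Suslin--Totaro isomorphism component by component) to be the main technical content; beyond it, the proof is a formal diagram chase anchored by Propositions \ref{K-theory smooth locus} and \ref{motivic smooth locus}.
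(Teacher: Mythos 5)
Your proposal is correct and follows essentially the same route as the paper: both reduce to the smooth locus $V=U^{\mathrm{sm}}$ via the injectivity statements of Propositions \ref{K-theory smooth locus} and \ref{motivic smooth locus}, identify the restricted maps with the Nesterenko--Suslin--Totaro/Kerz isomorphism and its inverse, and conclude by a diagram chase that $\lambda$ and the product-induced map are mutually inverse (the paper likewise uses Proposition \ref{motivic smooth locus} to see that the converse map is well-defined, i.e.\ that the log-Steinberg relations vanish). No essential difference.
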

\begin{proof}
Setting $j=0$ in Proposition \ref{motivic smooth locus} and setting $\mathscr{U}_{G}$ to be the image of the cup-product of $\mathcal{H}^{n-1}(\mathbb{Z}_{\log}(n-1))(U)$ with $G\in\mathcal{H}^{1}(\mathbb{Z}_{\log}(1))(U)$ in $\mathcal{H}^{n}(\mathbb{Z}_{\log}(n))$, we have a commutative diagram of injective maps
\begin{equation*}
\begin{tikzpicture}[descr/.style={fill=white,inner sep=1.5pt}]
        \matrix (m) [
            matrix of math nodes,
            row sep=3em,
            column sep=5em,
            text height=1.5ex, text depth=0.25ex
        ]
        { \mathcal{H}^{n}(\mathbb{Z}_{\log}(n))(U)/\mathscr{U}_{G} & \mathcal{H}^{n}(\mathbb{Z}(n))(V) \\
      \mathcal{K}_{\log,n}^{\mathrm{Mil}}(U)/\langle G\rangle & \mathcal{K}_{n}^{\mathrm{Mil}}(V) \\};

        \path[overlay, right hook->, font=\scriptsize]
        (m-1-1) edge node[left]{$\overline{\lambda}$} (m-2-1)        
        (m-1-1) edge (m-1-2)
        (m-2-1) edge (m-2-2)
        ;
        
        \path[overlay,->, font=\scriptsize]
        (m-1-2) edge node[right]{$\cong$} (m-2-2)
        ;
                 
\end{tikzpicture}
\end{equation*}
where the upper and lower horizontal maps are injective by propositions \ref{K-theory smooth locus} and \ref{motivic smooth locus} and the right vertical map is an isomorphism by \cite[Theorem 1.1]{Ker09}. This shows that the previously defined map $``f"$ factors through an injective map $\overline{\lambda}$. For the same reasons, the symbol map
$$
\mathbb{Z}_{\log}(1)[1]\otimes\cdots\otimes\mathbb{Z}_{\log}(1)[1]\rightarrow\mathbb{Z}_{\log}(n)[n]\rightarrow\mathcal{H}^{n}(\mathbb{Z}_{\log}(n))
$$ 
gives rise to a commutative diagram
\begin{equation*}
\begin{tikzpicture}[descr/.style={fill=white,inner sep=1.5pt}]
        \matrix (m) [
            matrix of math nodes,
            row sep=3em,
            column sep=5em,
            text height=1.5ex, text depth=0.25ex
        ]
        {  \mathcal{K}_{\log,n}^{\mathrm{Mil}}(U)/\langle G\rangle & \mathcal{H}^{n}(\mathbb{Z}_{\log}(n))(U)/\mathscr{U}_{G} \\
      \mathcal{K}_{n}^{\mathrm{Mil}}(V) & \mathcal{H}^{n}(\mathbb{Z}(n))(V) \\};

        \path[overlay,->, font=\scriptsize]
        (m-1-1) edge node[above]{$\overline{h}$} (m-1-2)
        (m-2-1) edge node[below]{$\cong$} (m-2-2)
        ;
        
        \path[overlay, right hook->, font=\scriptsize]
        (m-1-1) edge (m-2-1)
        (m-1-2) edge (m-2-2)
        ;
                 
\end{tikzpicture}
\end{equation*}

This shows that $\overline{\lambda}$ and $\overline{h}$ are the restrictions of the corresponding isomorphisms between the Milnor $K$-group and motivic cohomology on the smooth scheme $V$ and hence are isomorphisms themselves.

Now consider the commutative diagram

\begin{equation*}
\begin{tikzpicture}[descr/.style={fill=white,inner sep=1.5pt}]
        \matrix (m) [
            matrix of math nodes,
            row sep=3em,
            column sep=3em,
            text height=1.5ex, text depth=0.25ex
        ]
        { 0 & \langle G\rangle & \mathcal{K}_{\log, n}^{\mathrm{Mil}}(U) & \mathcal{K}_{\log, n}^{\mathrm{Mil}}(U)/\langle G\rangle & 0 \\
      0 & \mathscr{U}_{G} & \mathcal{H}^{n}(\mathbb{Z}_{\log}(n))(U) & \mathcal{H}^{n}(\mathbb{Z}_{\log}(n))(U)/\mathscr{U}_{G} & 0  \\};

        \path[overlay,dashed,->, font=\scriptsize] 
        (m-2-3) edge node[right]{$\lambda$} (m-1-3)
        ;
        
        \path[overlay,->, font=\scriptsize] 
        (m-1-1) edge (m-1-2)
        (m-1-2) edge (m-1-3)
        (m-1-3) edge (m-1-4)
        (m-1-4) edge (m-1-5)
        (m-2-1) edge (m-2-2)
        (m-2-2) edge (m-2-3)
        (m-2-3) edge (m-2-4)
        (m-2-4) edge (m-2-5)
        (m-2-2) edge (m-1-2)
        (m-2-4) edge node[left]{$\cong$} node[right]{$\overline{\lambda}$} (m-1-4)
         ;

\end{tikzpicture}
\end{equation*}
We wish to lift the isomorphism $\overline{\lambda}$ to an isomorphism $\lambda$. We prove this by induction on $n$, the case $n=1$ being clear. Note that under the canonical map 
$$
\langle G\rangle\otimes_{\mathbb{Z}}\mathcal{K}^{\mathrm{Mil}}_{\log,n-1}(U)\rightarrow \mathcal{K}^{\mathrm{Mil}}_{\log,n}(U)
$$
the only new vanishing Steinberg relations in $\mathcal{K}^{\mathrm{Mil}}_{\log,n}(U)$ are the ones $\{G^{k},a_{1},\ldots, a_{n-1}\}$ for $a_{i}=1-(\pi_{1}\cdots\pi_{r})^{k}$ for some $i$ and some $k\geq 0$, hence $a_{i}=1$ so $\{a_{1},\ldots, a_{n-1}\}=0$, or the relation $\{G^{k},-G^{k}\}$ with $k\in\mathbb{Z}$. On the other hand, an element $G^{k}\otimes z$ with $z\in\mathcal{H}^{n-1}(\mathbb{Z}_{\log}(n-1))(U)$ vanishes in $\mathcal{H}^{n}(\mathbb{Z}_{\log}(n))(U)$ under the cup-product if $z$ is represented by a log-correspondence $Z$ where $p_{2}(Z_{i})=-G^{k}$. Indeed, we can then adopt the argument in \cite[Example 5.7]{MVW06} that for $a\in\mathcal{H}^{1}(\mathbb{Z}_{\log}(1))(U)$, we have that $a\cup -a\in\mathcal{H}^{2}(\mathbb{Z}_{\log}(2))(U)$ vanishes; the proof in \cite[Example 5.7]{MVW06} easily passes over. Hence in the above diagram we have $\langle G\rangle\cong\mathscr{U}_{G}$. If $U$ is local we have defined $``f"$ on $\mathrm{Cor}_{0}(U,\mathbb{A}_{k}^{\wedge n})^{\ast}$. By a diagram chase we see that $``f"$ factors through the isomorphism $\lambda$. Now Lemma \ref{new lemma} implies that the image of all Steinberg relations in $\mathcal{K}_{\log, n}^{\mathrm{Mil}}(U)$ vanishes in $\mathcal{H}^{n}(\mathbb{Z}_{\log}(n))(U)$, hence the lifting $h$ of the map $\overline{h}$ is well-defined. We have seen that $h$ - as inverse of $\lambda$ - is an isomorphism if $U$ is local, and therefore is an isomorphism for general affine $U$ as well. 
\end{proof}

\begin{rem}\label{refined remark}
When $k$ is finite, instead of the map $\mu:\mathcal{K}_{\log,n}^{\mathrm{Mil}}(U)\rightarrow\mathcal{K}_{n}^{\mathrm{Mil}}(U^{\mathrm{sm}})$ of Proposition \ref{K-theory smooth locus}, one must consider the map $\hat{\mu}:\mathcal{K}_{\log,n}^{\mathrm{Mil}}(U)\rightarrow\hat{\mathcal{K}}_{n}^{\mathrm{Mil}}(U^{\mathrm{sm}})$ to improved Milnor $K$-theory \cite{Ker10} defined as follows: we consider the composite 
$$
(\underline{N}^{\mathrm{gp}}_{Y}(U))^{\otimes n}\rightarrow\mathcal{K}_{n}^{\mathrm{Mil}}(U^{\mathrm{sm}})\rightarrow\hat{\mathcal{K}}_{n}^{\mathrm{Mil}}(U^{\mathrm{sm}})
$$
where the last map is the natural homomorphism in loc. cit. Note that $\mathcal{K}_{n}^{\mathrm{Mil}}(U^{\mathrm{sm}})\rightarrow\hat{\mathcal{K}}_{n}^{\mathrm{Mil}}(U^{\mathrm{sm}})$ is an isomorphism whenever $k$ is infinite \cite[Proposition 10(5)]{Ker10}. Since the Gersten conjecture holds for improved Milnor $K$-theory by \cite[Proposition 10]{Ker10}, this composite map factors through $\mathcal{K}_{\log,n}^{\mathrm{Mil}}(U)$ because all the Steinberg relations there are killed in the Milnor $K$-groups of the generic points of the smooth locus. Since $\hat{\mathcal{K}}_{n}^{\mathrm{Mil}}(U^{\mathrm{sm}})\cong\mathcal{H}^{n}(\mathbb{Z}(n))(U^{\mathrm{sm}})$ by \cite[Proposition 10(11)]{Ker10}, using improved Milnor $K$-theory in the above proofs gives the results for finite fields $k$ as well.
\end{rem}

\begin{rem}\label{usual log-Milnor}
Related to Remarks \ref{special example} and \ref{special example 2}, it seems natural to work with the full monoid sheaf $\mathcal{O}_{Y}\cap j_{\ast}\mathcal{O}_{Y^{\mathrm{sm}}}$ in the definition of the logarithmic Milnor $K$-theory. In the following we will construct an example where we can work with $\mathcal{O}_{Y}\cap j_{\ast}\mathcal{O}_{Y^{\mathrm{sm}}}$ and all the results in the paper will hold verbatim:

Assume the semistable variety is locally given by $U=\mathrm{Spec}\,A/(\pi_{1}\pi_{2}\pi_{3})$ for a smooth $k$-algebra $A$ and irreducible smooth components $Y_{1}=\mathrm{Spec}\,A/(\pi_{1})$, $Y_{2}=\mathrm{Spec}\,A/(\pi_{2})$ and $Y_{3}=\mathrm{Spec}\,A/(\pi_{3})$. Let $P_{12}, P_{23}, P_{13}$ be the generic points of the codimension $1$ intersections $Y_{1}\cap Y_{2}$, $Y_{2}\cap Y_{3}$, $Y_{1}\cap Y_{3}$. Then the function $g_{1}g_{2}^{-1}g_{3}$ (with $g_{i}$ as in Definition \ref{alternative log-structure definition}) lies in $\mathcal{O}_{Y}(U)\cap j_{\ast}\mathcal{O}_{Y^{\mathrm{sm}}}(U^{\mathrm{sm}})$ and has a zero of order $2$ in $P_{13}$ and no other zeros or poles. Likewise, $g_{2}g_{3}^{-1}g_{1}$ has a zero of order $2$ in $P_{12}$ and $g_{3}g_{1}^{-1}g_{2}$ has a zero of order $2$ in $P_{23}$. Then it is clear that for any $h\in\mathcal{O}_{Y}(U)\cap j_{\ast}\mathcal{O}_{Y^{\mathrm{sm}}}(U^{\mathrm{sm}})$ we have $h^{2}\in\underline{N}_{Y}(U)^{\mathrm{gp}}$ and so 
\begin{equation*}
(\mathcal{O}_{Y}(U)\cap j_{\ast}\mathcal{O}_{Y^{\mathrm{sm}}}(U^{\mathrm{sm}}))^{\mathrm{gp}}/\underline{N}_{Y}(U)^{\mathrm{gp}}
\end{equation*}
is killed by $2$. When we define $\tilde{\mathcal{K}}_{\log,n}^{\mathrm{Mil}}(U)$ -- by replacing in Definition \ref{log Milnor K-theory definition} the group $\underline{N}_{Y}(U)^{\mathrm{gp}}$ by $(\mathcal{O}_{Y}(U)\cap j_{\ast}\mathcal{O}_{Y^{\mathrm{sm}}}(U^{\mathrm{sm}}))^{\mathrm{gp}}$ -- we have that  
\begin{equation*}
\tilde{\mathcal{K}}_{\log,n}^{\mathrm{Mil}}(U)/\mathcal{K}_{\log,n}^{\mathrm{Mil}}(U)
\end{equation*}
is a $2$-primary torsion group. If we assume that $p$ is bigger than $2$ then we get an isomorphism 
\begin{equation*}
\tilde{\mathcal{K}}_{\log,n}^{\mathrm{Mil}}(U)/p^{s}\simeq\mathcal{K}_{\log,n}^{\mathrm{Mil}}(U)/p^{s}
\end{equation*}
and all results in the paper in \S\ref{log HK subsection} and \S\ref{p-adic deformation section} will hold. In particular, in this special situation we can work with $\mathbb{Z}_{\log}(n)$ using all finite log-correspondences and its top cohomology sheaf $\mathcal{H}^{n}(\mathbb{Z}_{\log}(n))$ is isomorphic to $\tilde{\mathcal{K}}_{\log,n}^{\mathrm{Mil}}$.
\end{rem}

\subsection{Relation with the modified logarithmic Hyodo-Kato Hodge-Witt sheaf}\label{log HK subsection}
\ \par

Let $W_{s}\tilde{\omega}^{n}_{Y/k,\log}$ denote the modified logarithmic Hyodo-Kato Hodge-Witt sheaf on $Y_{\mathrm{\acute{e}t}}$. It is defined as follows: For $Y$ the closed fibre of a regular $W(k)$-scheme $X$ with semistable reduction, let $M_{Y}:=i^{\ast}M_{X}=i^{\ast}j_{\ast}\mathcal{O}_{X_{K}}$ be the usual log-structure on $Y$ (where $i:Y\hookrightarrow X$ is the closed immersion and $j:X_{K}\hookrightarrow X$ is the open immersion of the generic fibre). Let $u:Y^{\mathrm{sm}}\hookrightarrow Y$ be the open immersion of the smooth part. Then the modified (extended) Hyodo-Kato complex $W_{s}\tilde{\omega}^{\bullet}_{Y/k}$ is the $W_{s}(\mathcal{O}_{Y})$-subalgebra of $\mathcal{A}^{\bullet}:=u_{\ast}W_{s}\Omega_{Y^{\mathrm{sm}}/k}^{\bullet}[\theta]/\theta^{2}$, where $\theta$ is an indeterminate in degree one satisfying $\theta a=(-1)^{q}a\theta$ for $a\in u_{\ast}W_{s}\Omega^{q}_{Y^{\mathrm{sm}}/k}$ and $d\theta=0$, generated by $dW_{s}(\mathcal{O}_{Y})$ and the image of $d\log:M_{Y}\rightarrow\mathcal{A}^{1}$ defined on $u^{-1}i^{-1}(\mathcal{O}_{X}^{\ast})$ by the composition 
\begin{equation*}
u^{-1}i^{-1}(\mathcal{O}_{X}^{\ast})\rightarrow\mathcal{O}^{\ast}_{Y^{\mathrm{sm}}}\xrightarrow{d\log[-]}W_{s}\Omega^{1}_{Y^{\mathrm{sm}}/k}
\end{equation*}
and on $K^{\ast}$ by $a\mapsto\mathrm{ord}_{K}(a)\theta$ (see \cite[1.4]{HK94}). Then we recall \cite[Proposition 1.5]{HK94}:
\begin{prop}\label{theta ses}
The sequence 
\begin{align*}
& 0\rightarrow W_{s}\omega_{Y/k}^{\bullet}[-1]\rightarrow W_{s}\tilde{\omega}^{\bullet}_{Y/k}\rightarrow W_{s}\omega^{\bullet}_{Y/k}\rightarrow 0 \\
& \ \ \ \ \ \ \ \ \ \ \ \ \ a \ \ \ \ \mapsto \ \ \ \ a\theta \\
& \ \ \ \ \ \ \ \ \ \ \ \ \ \ \ \ \ \ \ \ \ \ \ \ \ \ \ \theta\ \ \ \ \mapsto \ \ \ \ 0
\end{align*}
is exact.
\end{prop}
The map $d\log:M_{Y}^{\mathrm{gp}}\rightarrow W_{s}\tilde{\omega}^{1}_{Y/k}$ induces a map $d\log:(M_{Y}^{\mathrm{gp}})^{\otimes n}\rightarrow W_{s}\tilde{\omega}^{n}_{Y/k}$. Write $W_{s}\tilde{\omega}^{n}_{Y/k,\log}$ for the image. As a corollary of Proposition \ref{theta ses} using $1-\varphi$ on $W_{s}\tilde{\omega}_{Y/k}^{n}$ we obtain
\begin{prop}\label{theta sequence}
There is an exact sequence
\begin{align*}
& 0\rightarrow W_{s}\omega_{Y/k,\log}^{n-1}[-1]\rightarrow W_{s}\tilde{\omega}^{n}_{Y/k,\log}\rightarrow W_{s}\omega^{n}_{Y/k,\log}\rightarrow 0 \\
& \ \ \ \ \ \ \ \ \ \ \ \ \ \ \ a \ \ \ \ \ \mapsto \ \ \ \ \ a\theta \\
& \ \ \ \ \ \ \ \ \ \ \ \ \ \ \ \ \ \ \ \ \ \ \ \ \ \ \ \ \ \ \ \theta\ \ \ \ \ \mapsto \ \ \ \ \ 0\,.
\end{align*}
\end{prop}

In the next section we will glue the log-motivic complex $\mathbb{Z}_{\log}(n)$ through its top cohomology $\mathcal{H}^{n}(\mathbb{Z}_{\log}(n))(U)\simeq\mathcal{K}_{\log,n}^{\mathrm{Mil}}(U)$ with the log-syntomic complex of Kato-Tsuiji via the modified logarithmic Hyodo-Kato sheaf $W_{s}\tilde{\omega}^{n}_{Y/k,\log}$ in order to achieve a semistable analogue of the deformational part of the main result of \cite{BEK14}. We construct a canonical map
\begin{equation*}
d\log:\mathcal{K}_{\log,n}^{\mathrm{Mil}}(U)\rightarrow W_{s}\tilde{\omega}^{n}_{Y/k,\log}(U)
\end{equation*}
as follows. For $n=1$, the map
\begin{equation*}
d\log:\underline{N}_{Y}^{\mathrm{gp}}(U)\rightarrow W_{s}\tilde{\omega}^{1}_{Y/k,\log}(U)
\end{equation*}
is given by the assignments
\begin{align*}
& x\in\mathcal{O}_{Y}(U)^{\ast}\mapsto d\log[x]\in W_{s}\omega^{1}_{Y/k,\log} \\
& g_{i}=\beta(e_{i})\mapsto d\log(\pi_{i})\in W_{s}\tilde{\omega}^{1}_{Y/k,\log}
\end{align*}
where $\pi_{i}=\alpha(e_{i})$ for the structure map $\alpha:M_{Y}\rightarrow\mathcal{O}_{Y}$, and $d\log$ is the canonical map on $M_{Y}^{\mathrm{gp}}$ that we recalled above (so $\prod_{i=1}^{r}g_{i}\mapsto\theta$). Note that $d\log(\pi_{i})$ in $\bigoplus_{\eta_i\in U^{0}}W_{s}\Omega^{1}_{k(\eta_{i})/k}$ has $j$-component $d\log[\pi_{i}]$ for $j\neq i$ and $i$-component $-\sum_{j\neq i}d\log[\pi_{j}]$. The above is extended to a map $d\log:\mathcal{K}_{\log,n}^{\mathrm{Mil}}(U)\rightarrow W_{s}\tilde{\omega}^{n}_{Y/k,\log}(U)$ by taking exterior products.

Since the $d\log$ map is surjective on $M_{Y}^{\otimes n}$ by definition, it is also surjective on $\mathcal{K}_{\log,n}^{\mathrm{Mil}}(U)$. By composing $d\log$ with the canonical injective map
\begin{equation*}
W_{s}\omega^{n}_{U/k,\log}\hookrightarrow\bigoplus_{\eta\in U^{0}}W_{s}\Omega^{n}_{k(\eta)/k}
\end{equation*}
it is clear that symbols $\{a,-a\}$ and $\{a,1-a\}$, $a\in\underline{N}_{Y}(U)^{\mathrm{gp}}$ and $\{1-\pi_{i}x,g_{i}x\}$ vanish in $\bigoplus_{\eta\in U^{0}}W_{r}\Omega^{n}_{k(\eta)/k}$, since they vanish in $\bigoplus_{\eta\in U^{0}}\mathcal{K}_{n}^{\mathrm{Mil}}(k(\eta))$, hence $d\log$ is well-defined on $\mathcal{K}^{\mathrm{Mil}}_{\log,n}(U)$. We will prove the following semistable analogue of the Bloch-Kato-Gabber theorem:
\begin{thm}\label{mod-p K-theory}
\begin{enumerate}[a)]
\item The group $\mathcal{K}_{\log, n}^{\mathrm{Mil}}(U)$ is $p$-torsion-free.

\item We have an isomorphism
\begin{equation*}
\mathcal{K}_{\log,n}^{\mathrm{Mil}}(U)/p^{s}\simeq W_{s}\tilde{\omega}^{n}_{Y/k,\log}(U)\,.
\end{equation*}
\end{enumerate}
\end{thm}
\begin{proof}
\begin{enumerate}[a)]
\item Let $V=U^{\mathrm{sm}}$. Since $\mathcal{K}_{n}^{\mathrm{Mil}}(V)$ is $p$-torsion-free by \cite{Izh91}, Proposition \ref{K-theory smooth locus} shows that $\mathcal{K}_{\log,n}^{\mathrm{Mil}}(U)/\langle G\rangle$ is $p$-torsion-free. We prove by induction on $n$ that $\mathcal{K}_{\log,n}^{\mathrm{Mil}}(U)$ is $p$-torsion-free as well. The case $n=1$ is clear. Assume it holds for $n-1$; mapping $\{G^{\mathbb{Z}}\}\otimes\mathcal{K}_{\log,n-1}^{\mathrm{Mil}}(U)$ to $\mathcal{K}_{\log,n}^{\mathrm{Mil}}(U)$ we see that the only new relations involving $G$ are $\{G^{k},1-(\pi_{1}\cdots\pi_{r})^{k}\}$ (which is $\{G^{k},1\}$ but a symbol with entry $1$ already vanishes in $\mathcal{K}_{\log,n-1}^{\mathrm{Mil}}(U)$) and $\{G^{k},-G^{k}\}$. But an element $\{G,a\}$ such that $p\{G,a\}=\{G,-G\}$ is already zero (compare with the argument in \cite[Lemma 5.8]{MVW06}). This proves part (a).

\item By part (a) we have an isomorphism
\begin{equation*}
\mathcal{K}_{\log,n}^{\mathrm{Mil}}(U)/p\cong p^{s-1}\mathcal{K}_{\log,n}^{\mathrm{Mil}}(U)/p^{s}\,.
\end{equation*}
Then consider the commutative diagram with exact rows
\begin{equation*}
\begin{tikzpicture}[descr/.style={fill=white,inner sep=1.5pt}]
        \matrix (m) [
            matrix of math nodes,
            row sep=3em,
            column sep=3em,
            text height=1.5ex, text depth=0.25ex
        ]
        { 0 & \mathcal{K}_{\log,n}^{\mathrm{Mil}}(U)/p & \mathcal{K}_{\log,n}^{\mathrm{Mil}}(U)/p^{s} & \mathcal{K}_{\log,n}^{\mathrm{Mil}}(U)/p^{s-1} & 0  \\
      0 & \tilde{\omega}^{n}_{Y/k,\log}(U) & W_{s}\tilde{\omega}^{n}_{Y/k,\log}(U) & W_{s-1}\tilde{\omega}^{n}_{Y/k,\log}(U) & 0 \\};

        \path[overlay,->, font=\scriptsize] 
        (m-1-1) edge (m-1-2)
        (m-1-2) edge node[above]{$\times p^{s-1}$}(m-1-3)
        (m-1-3) edge (m-1-4)
        (m-1-4) edge (m-1-5)
        (m-2-1) edge (m-2-2)
        (m-2-2) edge node[above]{$\times p^{s-1}$}(m-2-3)
        (m-2-3) edge (m-2-4)
        (m-2-4) edge (m-2-5)
        (m-1-2) edge node[right]{$d\log$} (m-2-2)
        (m-1-3) edge node[right]{$d\log$} (m-2-3)
        (m-1-4) edge node[right]{$d\log$} (m-2-4)
         ;
                 
\end{tikzpicture}
\end{equation*}
By induction, it suffices to show that the left vertical arrow is an isomorphism. Since it is surjective by definition, we need to show injectivity. The proof of Proposition \ref{K-theory smooth locus} implies that the map $\left(\mathcal{K}^{\mathrm{Mil}}_{\log,n}(U)/\langle G\rangle\right)/p\rightarrow K_{n}^{\mathrm{Mil}}(V)/p$ is injective as well. Indeed, if a symbol $\{a_{1},\ldots, a_{n}\}$ vanishes in $\displaystyle\prod_{\eta_{i}\in U^{0}}\mathcal{K}^{\mathrm{Mil}}_{n}(k(\eta_{i})/p$ then for each $\eta_{i}$ there exists $j\in\{1,\ldots, n\}$ such that $a_{j}=b_{j}^{p}+\pi_{i}x=b_{j}^{p}(1+\pi_{i}\frac{x}{b_{j}^{p}})$. One then follows the proof of Proposition \ref{K-theory smooth locus} to conclude. 

Let $\mathcal{K}_{n}^{\mathrm{Mil}}(U)$ be the image of $(\mathcal{O}(U)^{\ast})^{\otimes n}$ in $\mathcal{K}_{\log,n}^{\mathrm{Mil}}(U)$. Since the composite map $(\mathcal{O}(U)^{\ast})^{\otimes n}\rightarrow\tilde{\omega}_{Y/k,\log}^{n}(U)$ factors through the injection $j_{\ast}\mathcal{K}^{\mathrm{Mil}}_{n}(V)/p\rightarrow j_{\ast}\Omega^{n}_{Y^{\mathrm{sm}}/k,\log}(V)$ (which is an isomorphism by the Bloch-Kato-Gabber theorem \cite[Corollary 2.8]{BK86}), we see that $d\log$ restricted to $\mathcal{K}^{\mathrm{Mil}}_{n}(U)/p$ is injective. Using the exact sequence
\begin{equation*}
0\rightarrow\omega^{n-1}_{Y/k,\log}\xrightarrow{\wedge\theta}\tilde{\omega}_{Y/k,\log}^{n}\rightarrow\omega_{Y/k,\log}^{n}\rightarrow 0
\end{equation*}
we will conclude the proof as below.

Consider the composite map (which is surjective)
\begin{equation*}
\mathcal{K}^{\mathrm{Mil}}_{\log,n}(U)/p\rightarrow\tilde{\omega}^{n}_{Y/k,\log}(U)\rightarrow\omega^{n}_{Y/k,\log}(U)\,.
\end{equation*}
For $g_{i}=\pi_{i}+\displaystyle\prod_{j\neq i}\pi_{j}$ the image $d\log(g_{i})$ in $\omega^{1}_{Y/k,\log}$ has $j$-component (in $\Omega^{1}_{k(\eta_{j})/k}$) $d\log\pi_{i}$ for $j\neq i$, and $i$-component (in $\Omega^{1}_{k(\eta_{i})/k}$) $-\displaystyle\sum_{j\neq i}d\log\pi_{j}$. It is then clear that the kernel of the map
\begin{equation*}
\left.(\underline{N}_{Y}^{\mathrm{gp}}(U)/p)\middle/(\mathcal{O}(U)^{\ast}/p)\right.\rightarrow\omega^{1}_{Y/k,\log}(U)/\mathrm{image}(\mathcal{O}(U)^{\ast})
\end{equation*}
is generated by $\left(\displaystyle\prod^{r}_{i=1}g_{i}\right)^{\pm 1}$.

Since for all $a\in\underline{N}^{\mathrm{gp}}_{Y}(U)$, the symbol $\{a,a\}$ vanishes in $\mathcal{K}_{\log,2}^{\mathrm{Mil}}(U)/p$, because $\{a,-1\}$ is $p$-divisible, we see that the $\mathbb{F}_{p}$-rank of the kernel of the map 
\begin{equation*}
\left.(\mathcal{K}_{\log,n}^{\mathrm{Mil}}(U)/p)\middle/(\mathcal{K}_{n}^{\mathrm{Mil}}(U)/p)\right.\rightarrow\omega^{n}_{Y/k,\log}(U)/\mathrm{image}((\mathcal{O}(U)^{\ast})^{\otimes n})
\end{equation*}
is equal to the $\mathbb{F}_{p}$-rank of $\omega_{Y/k,\log}^{n-1}/\mathrm{image}((\mathcal{O}(U)^{\ast})^{\otimes n-1})$. But this is also the $\mathbb{F}_{p}$-rank of $\omega_{Y/k,\log}^{n-1}/\mathrm{image}((\mathcal{O}(U)^{\ast})^{\otimes n-1})\wedge\theta$. Hence the $d\log$ map 
\begin{equation*}
\mathcal{K}_{\log,n}^{\mathrm{Mil}}(U)/p\rightarrow\tilde{\omega}^{n}_{Y/k,\log}(U)\,,
\end{equation*}
which is already known to be surjective, must be an isomorphism. 
\end{enumerate}
\end{proof}

Define $\mathcal{K}_{\log,Y,\ast}^{\mathrm{Mil}}$ to be the Zariski sheafification of the presheaf $U\mapsto\mathcal{K}_{\log,Y,\ast}^{\mathrm{Mil}}(U)$. Then we have the following semistable analogue of \cite[Theorem 8.5]{GL00}:
\begin{prop}
\label{semistable Geisser-Levine}
For each $n,s\geq 0$ there is a quasi-isomorphism
\begin{equation*}
\mathbb{Z}_{\log,Y}(n)\otimes^{\mathbb{L}}\mathbb{Z}/p^{s}\simeq W_{s}\tilde{\omega}^{n}_{Y/k,\log}[-n]
\end{equation*}
in $D(Y_{\mathrm{Zar}})$.
\end{prop}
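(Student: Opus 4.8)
The plan is to compute the Zariski cohomology sheaves of $\mathbb{Z}_{\log,Y}(n)\otimes^{\mathbb{L}}\mathbb{Z}/p^{r}$, show they vanish in every degree except $n$, and identify the surviving degree‑$n$ sheaf with $W_{r}\omega^{n}_{Y/k,\log}$; the canonical truncation map $\mathbb{Z}_{\log,Y}(n)\otimes^{\mathbb{L}}\mathbb{Z}/p^{r}\to\mathcal{H}^{n}(\mathbb{Z}_{\log,Y}(n)\otimes^{\mathbb{L}}\mathbb{Z}/p^{r})[-n]$ is then a quasi-isomorphism and yields the assertion. Since every term of $\mathbb{Z}_{\log,Y}(n)=C_{\ast}(\mathbb{Z}_{\mathrm{tr}}(\mathbb{A}_{k}^{\wedge n})^{\ast})[-n]$ is a free abelian group (as recorded in Remark \ref{mod p coefficients}), the derived tensor product is represented by the honest quotient complex $\mathbb{Z}_{\log,Y}(n)/p^{r}$, and the short exact sequence
\[
0\to\mathbb{Z}_{\log,Y}(n)\xrightarrow{p^{r}}\mathbb{Z}_{\log,Y}(n)\to\mathbb{Z}_{\log,Y}(n)/p^{r}\to 0
\]
produces the Bockstein long exact sequence of cohomology sheaves used below.

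First I would establish vanishing away from degree $n$. By Remark \ref{mod p coefficients} the natural map
\[
\mathcal{H}^{i}(\mathbb{Z}_{\log,Y}(n)\otimes^{\mathbb{L}}\mathbb{Z}/p^{r})\longrightarrow u_{\ast}\mathcal{H}^{i}(\mathbb{Z}_{Y^{\mathrm{sm}}}(n)\otimes^{\mathbb{L}}\mathbb{Z}/p^{r})
\]
is injective for every $i$, where $u:Y^{\mathrm{sm}}\hookrightarrow Y$ is the inclusion of the smooth (equivalently, trivial) locus. Over $Y^{\mathrm{sm}}$ the log structure is trivial, so $\mathbb{Z}_{\log,Y}(n)|_{Y^{\mathrm{sm}}}$ is the usual Suslin--Voevodsky complex $\mathbb{Z}_{Y^{\mathrm{sm}}}(n)$, and the Geisser--Levine theorem \cite[Theorem 8.5]{GL00} gives a quasi-isomorphism $\mathbb{Z}_{Y^{\mathrm{sm}}}(n)\otimes^{\mathbb{L}}\mathbb{Z}/p^{r}\simeq W_{r}\Omega^{n}_{Y^{\mathrm{sm}}/k,\log}[-n]$ in $D(Y^{\mathrm{sm}}_{\mathrm{Zar}})$; hence $\mathcal{H}^{i}(\mathbb{Z}_{Y^{\mathrm{sm}}}(n)\otimes^{\mathbb{L}}\mathbb{Z}/p^{r})=0$ for $i\neq n$, and therefore $\mathcal{H}^{i}(\mathbb{Z}_{\log,Y}(n)\otimes^{\mathbb{L}}\mathbb{Z}/p^{r})=0$ for $i\neq n$ as well.

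It remains to identify the surviving sheaf $\mathcal{H}^{n}(\mathbb{Z}_{\log,Y}(n)\otimes^{\mathbb{L}}\mathbb{Z}/p^{r})$. The complex $C_{\ast}(\mathbb{Z}_{\mathrm{tr}}(\mathbb{A}_{k}^{\wedge n})^{\ast})[-n]$ is supported in cohomological degrees $\leq n$, so $\mathcal{H}^{n+1}(\mathbb{Z}_{\log,Y}(n))=0$, while $\mathcal{H}^{n}(\mathbb{Z}_{\log,Y}(n))\cong\mathcal{K}^{\mathrm{Mil}}_{\log,Y,n}$ by the Zariski sheafification of Theorem \ref{K-theory and motivic}. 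Feeding this into the Bockstein sequence gives $\mathcal{H}^{n}(\mathbb{Z}_{\log,Y}(n)\otimes^{\mathbb{L}}\mathbb{Z}/p^{r})\cong\mathcal{H}^{n}(\mathbb{Z}_{\log,Y}(n))/p^{r}\cong\mathcal{K}^{\mathrm{Mil}}_{\log,Y,n}/p^{r}$. Finally, the isomorphism of Proposition \ref{mod-p K-theory} is natural in $U$ and $W_{r}\omega^{n}_{Y/k,\log}$ is already a Zariski sheaf, so sheafification turns it into an isomorphism $\mathcal{K}^{\mathrm{Mil}}_{\log,Y,n}/p^{r}\cong W_{r}\omega^{n}_{Y/k,\log}$. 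Combining the three identifications completes the argument.

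The entire difficulty has been pushed into the results already proved: a priori the cohomology sheaves of $\mathbb{Z}_{\log,Y}(n)\otimes^{\mathbb{L}}\mathbb{Z}/p^{r}$ could have a summand supported on the singular locus $Y\setminus Y^{\mathrm{sm}}$ that is invisible after restriction to $Y^{\mathrm{sm}}$, and it is precisely Proposition \ref{motivic smooth locus}, together with its mod-$p^{r}$ consequence in Remark \ref{mod p coefficients}, that rules this out, via the closure-and-properness argument for finite log-correspondences in its proof. Granting that input and the Bloch--Kato--Gabber-type isomorphism of Proposition \ref{mod-p K-theory}, everything else is formal Bockstein and truncation bookkeeping; in particular no hypothesis such as $r<p$ is needed here, in contrast with the pro-complex statements later in the paper.
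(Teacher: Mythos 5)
Your proposal is correct and follows essentially the same route as the paper's proof: injectivity into the smooth locus via Remark \ref{mod p coefficients}, Geisser--Levine on $Y^{\mathrm{sm}}$ for vanishing outside degree $n$, the Bockstein triangle together with $\mathcal{H}^{n+1}(\mathbb{Z}_{\log,Y}(n))=0$ to identify $\mathcal{H}^{n}$ with $\mathcal{K}^{\mathrm{Mil}}_{\log,Y,n}/p^{r}$, and Proposition \ref{mod-p K-theory} to conclude. The only cosmetic difference is that the paper additionally records the injectivity of $p^{r}$ on $\mathcal{K}^{\mathrm{Mil}}_{\log,Y,n}$ (via $p$-torsion-freeness), which your cokernel-only argument correctly shows is not needed for the identification in degree $n$.
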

\begin{proof}
Recall from Remark \ref{mod p coefficients} that we have exact sequence
\begin{equation*}
\{G\}\cup\mathcal{H}^{i-1}(\mathbb{Z}_{\log,Y}(n-1)\otimes^{\mathbb{L}}\mathbb{Z}/p^{s})\rightarrow \mathcal{H}^{i}(\mathbb{Z}_{\log,Y}(n)\otimes^{\mathbb{L}}\mathbb{Z}/p^{s})\rightarrow u_{\ast}\mathcal{H}^{i}(\mathbb{Z}_{Y^{\mathrm{sm}}}(n)\otimes^{\mathbb{L}}\mathbb{Z}/p^{s})\,
\end{equation*}
where $u:Y^{\mathrm{sm}}\hookrightarrow Y$ is the inclusion of the smooth locus. By \cite[Theorem 8.3]{GL00} we have 
\begin{equation*}
\mathcal{H}^{i}(\mathbb{Z}_{Y^{\mathrm{sm}}}(n)\otimes^{\mathbb{L}}\mathbb{Z}/p^{s})\simeq\left\{
\begin{array}{ll}
      0 & \text{if }i\neq n \\
      W_{s}\Omega^{n}_{Y^{\mathrm{sm}}/k,\log} & \text{if }i=n \\
\end{array} 
\right.
\end{equation*}
By induction on $n$ (the case $n=1$ being clear) we may assume that 
\newline $\mathcal{H}^{i}(\mathbb{Z}_{\log,Y}(n-1)\otimes^{\mathbb{L}}\mathbb{Z}/p^{s})$ vanishes for $i\neq n-1$. Using the above exact sequence we see that $\mathbb{Z}_{\log,Y}(n)\otimes^{\mathbb{L}}\mathbb{Z}/p^{s}$ is acyclic outside of cohomological degree $n$. It therefore suffices to show that $\mathcal{H}^{n}(\mathbb{Z}_{\log,Y}(n)\otimes^{\mathbb{L}}\mathbb{Z}/p^{s})\simeq W_{s}\tilde{\omega}_{Y/k,\log}^{n}$. To see this, the above vanishing and the exact triangle
\begin{equation*}
\mathbb{Z}_{\log,Y}(n)\rightarrow\mathbb{Z}_{\log,Y}(n)\rightarrow\mathbb{Z}_{\log,Y}(n)\otimes^{\mathbb{L}}\mathbb{Z}/p^{s}\xrightarrow{+1}
\end{equation*}
 gives a short exact sequence 
\begin{equation*}
0\rightarrow\mathcal{H}^{n}(\mathbb{Z}_{\log,Y}(n))\rightarrow\mathcal{H}^{n}(\mathbb{Z}_{\log,Y}(n))\rightarrow\mathcal{H}^{n}(\mathbb{Z}_{\log,Y}(n)\otimes^{\mathbb{L}}\mathbb{Z}/p^{s})\rightarrow 0
\end{equation*}
fitting into the following commutative diagram
\begin{equation*}
\begin{tikzpicture}[descr/.style={fill=white,inner sep=1.5pt}]
        \matrix (m) [
            matrix of math nodes,
            row sep=3em,
            column sep=2em,
            text height=1.5ex, text depth=0.25ex
        ]
        { 0 & \mathcal{H}^{n}(\mathbb{Z}_{\log,Y}(n)) & \mathcal{H}^{n}(\mathbb{Z}_{\log,Y}(n)) & \mathcal{H}^{n}(\mathbb{Z}_{\log,Y}(n)\otimes^{\mathbb{L}}\mathbb{Z}/p^{s}) & 0 \\
       0 & \mathcal{K}_{\log,Y,n}^{\mathrm{Mil}} & \mathcal{K}_{\log,Y,n}^{\mathrm{Mil}} & \mathcal{K}_{\log,Y,n}^{\mathrm{Mil}}/p^{s} & 0 \\};

        \path[overlay,->, font=\scriptsize] 
        (m-1-1) edge (m-1-2)
        (m-1-2) edge (m-1-3)
        (m-1-3) edge (m-1-4)
        (m-1-4) edge (m-1-5)
        (m-2-1) edge (m-2-2)
        (m-2-2) edge node[above]{$p^{s}$} (m-2-3)
        (m-2-3) edge (m-2-4)
        (m-2-4) edge (m-2-5)
        (m-2-2) edge node[right]{$\wr$} (m-1-2)
        (m-2-3) edge node[right]{$\wr$} (m-1-3)
              
         ;
                                        
\end{tikzpicture}
\end{equation*}
where the isomorphisms $\mathcal{K}_{\log,Y,n}^{\mathrm{Mil}}\simeq\mathcal{H}^{n}(\mathbb{Z}_{\log,Y}(n))$ are by Theorem \ref{K-theory and motivic} (when $k$ is finite use the improved logarithmic $K$-theory as in Remark \ref{refined remark}(ii)). The map $p^{s}:\mathcal{K}_{\log,Y,n}^{\mathrm{Mil}}\rightarrow \mathcal{K}_{\log,Y,n}^{\mathrm{Mil}}$ in the lower sequence is injective because $\mathcal{K}_{\log,Y,n}^{\mathrm{Mil}}$ is $p$-torsion free by Theorem \ref{mod-p K-theory}(a). Hence the lower sequence is also exact and we conclude that there is an induced isomorphism $\mathcal{K}_{\log,Y,n}^{\mathrm{Mil}}/p^{s}\xrightarrow{\sim}\mathcal{H}^{n}(\mathbb{Z}_{\log,Y}(n)\otimes^{\mathbb{L}}\mathbb{Z}/p^{s})$. Then the proposition follows from Theorem \ref{mod-p K-theory}(b).
\end{proof}

\section{Log-syntomic cohomology and the $p$-adic variational Hodge conjecture}\label{p-adic deformation section}

Let $k$ be a perfect field of characteristic $p>2$, and let $K=\mathrm{Frac}\,W(k)$. In this section we fix a natural number $n<p$.  Let $X$ be a scheme over $W(k)$ with semistable reduction, that is \'{e}tale locally on $X$ the structure morphism factors as \begin{equation*}
X\xrightarrow{u}\mathrm{Spec}\,W(k)[t_{1},\ldots,t_{a}]/(t_{1}\cdots t_{b}-p)\xrightarrow{\delta}\mathrm{Spec}\,W(k)
\end{equation*}
for some $a\geq b$, where $u$ is a smooth morphism and $\delta$ is induced by the diagonal map. Then the generic fibre $X_{K}$ is smooth and the special fibre $Y$ is a reduced normal crossings divisor on $X$. If  $Y$ is endowed with the inverse image $M_{Y}$ of the divisorial log-structure $M_{X}$ associated to $Y\hookrightarrow X$, then $(Y,M_{Y})$ is a semistable variety in the sense of \S\ref{finite log section}. For each $m\in\mathbb{N}$, set $X_{m}=X\times_{W(k)}W_{m}(k)$ and let $M_{X_{m}}$ be the pullback (in the sense of log-structures) of $M_{X}$ along the closed immersion $\iota_{m}:X_{m}\hookrightarrow X$. Then $(X_{m},M_{X_{m}})$ is a log-scheme over $(\mathrm{Spec}\,W_{m}(k),L_{m})$ where $L_{m}$ is the log-structure associated to $\mathbb{N}\rightarrow W_{m}(k)$, $1\mapsto p$. In the case $m=1$ we have $(X_{1},M_{X_{1}})=(Y,M_{Y})$.

In order to construct a log-motivic complex $\mathbb{Z}_{\log,X_{\bigcdot}}(n)$ as a pro-complex in the derived category in the sense of \cite{BEK14}, we need a good definition of log-syntomic complexes. By this we mean a complex that allows us to glue the log-motivic complex $\mathbb{Z}_{\log,Y}(n)$ defined in \S\ref{log-motivic cohomology section} along a logarithmic (Hyodo-Kato) Hodge-Witt sheaf, using Theorem \ref{K-theory and motivic} and Theorem \ref{mod-p K-theory}. In \cite[\S3]{NN16} a complex $R\Gamma(X,s_{\log}(n))$ is defined and is identified with the homotopy limit (or iterated fibre) of the square 
\begin{equation}
\label{homotopy limit}
\begin{tikzpicture}[descr/.style={fill=white,inner sep=1.5pt}]
        \matrix (m) [
            matrix of math nodes,
            row sep=3em,
            column sep=5em,
            text height=1.5ex, text depth=0.25ex
        ]
        { R\Gamma_{\mathrm{HK}}(X)_{\mathbb{Q}} & R\Gamma_{\mathrm{HK}}(X)_{\mathbb{Q}}\oplus R\Gamma_{\mathrm{dR}}(X)/\mathrm{Fil}^{n}  \\
       R\Gamma_{\mathrm{HK}}(X)_{\mathbb{Q}} & R\Gamma_{\mathrm{HK}}(X)_{\mathbb{Q}} \\};

        \path[overlay,->, font=\scriptsize] 
        (m-1-1) edge node[left]{$N$} (m-2-1)
        (m-1-1) edge node[above]{$(1-\varphi_{n},\iota_{\mathrm{dR})}$} (m-1-2)
        (m-1-2) edge node[right]{$(N,0)$} (m-2-2)
        (m-2-1) edge node[above]{$1-\varphi_{n-1}$} (m-2-2)
         ;
                                        
\end{tikzpicture}
\end{equation}
where $R\Gamma_{\mathrm{HK}}(X)_{\mathbb{Q}}$ is the Hyodo-Kato cohomology, $\iota_{\mathrm{dR}}$ is induced by the Hyodo-Kato isomorphism and $\varphi_{n}$ is the divided Frobenius ``$\frac{\varphi}{p^{n}}$''. We will give an equivalent description of $R\Gamma(X,s_{\log}(n))$ using the logarithmic Hyodo-Kato sheaves. We can reconstruct the commutative diagram \eqref{homotopy limit} by applying $R\Gamma$ to a commutative diagram of pro-sheaves in the category $\mathbb{Q}\otimes D_{\mathrm{pro}}(Y_{\mathrm{\acute{e}t}})$ (the isogeny category of the category $D_{\mathrm{pro}}(Y_{\mathrm{\acute{e}t}})$ in \cite[Definition A.3]{BEK14}). Namely 
\begin{equation}
\label{HK diagram}
\begin{tikzpicture}[descr/.style={fill=white,inner sep=1.5pt}]
        \matrix (m) [
            matrix of math nodes,
            row sep=3em,
            column sep=5em,
            text height=1.5ex, text depth=0.25ex
        ]
        { \mathbb{Q}\otimes W_{\bigcdot}\omega_{Y/k}^{\bullet} & \mathbb{Q}\otimes W_{\bigcdot}\omega_{Y/k}^{\bullet}\oplus \omega_{X_{\bigcdot}/W(k)}^{\bullet}\otimes\mathbb{Q}/\mathrm{Fil}^{n}  \\
       \mathbb{Q}\otimes W_{\bigcdot}\omega_{Y/k}^{\bullet} & \mathbb{Q}\otimes W_{\bigcdot}\omega_{Y/k}^{\bullet} \\};

        \path[overlay,->, font=\scriptsize] 
        (m-1-1) edge node[left]{$N$} (m-2-1)
        (m-1-1) edge node[above]{$(1-\varphi_{n},\iota_{\mathrm{dR})}$} (m-1-2)
        (m-1-2) edge node[right]{$(N,0)$} (m-2-2)
        (m-2-1) edge node[above]{$1-\varphi_{n-1}$} (m-2-2)
         ;
                                        
\end{tikzpicture}
\end{equation}
Here $\iota_{dR}:\mathbb{Q}\otimes W_{\bigcdot}\omega_{Y/k}^{\bullet}\rightarrow\omega_{X_{\bigcdot}/W(k)}^{\bullet}\otimes\mathbb{Q}$ is the Hyodo-Kato isomorphism \cite[5.4]{HK94}, where $\omega_{X_{\bigcdot}/W(k)}^{\bullet}$ is the logarithmic de Rham pro-complex induced by $\omega_{X/W(k)}^{\bullet}$ with locally free components $\omega^{i}_{X/W(k)}=\bigwedge^{i}\omega^{1}_{X/W(k)}$, where $\omega^{1}_{X/W(k)}$ is generated by $dt_{i}/t_{i}$ for $1\leq i\leq b$ and $dt_{i}$ for $i> b$, subject to the relation $\sum_{i=1}^{b}dt_{i}/t_{i}=0$. Using the Hyodo-Kato exact sequence \cite[Proposition 1.5]{HK94}
\begin{equation}
\label{HK sequence}
0\rightarrow W_{\bigcdot}\omega^{\bullet}_{Y/k}[-1]\xrightarrow{\wedge\theta} W_{\bigcdot}\tilde{\omega}^{\bullet}_{Y/k}\rightarrow W_{\bigcdot}\omega^{\bullet}_{Y/k}\rightarrow 0
\end{equation} we can redefine the homotopy limit of \eqref{HK diagram} as 
\begin{equation}
\label{syntomic cone}
\mathfrak{S}_{\log, X_{\bigcdot}}(n)_{\mathrm{\acute{e}t}}=\mathrm{Cone}(W_{\bigcdot}\tilde{\omega}_{Y/k}^{\bullet}\otimes\mathbb{Q}\xrightarrow{(1-\varphi_{n},\iota_{\mathrm{dR}})}W_{\bigcdot}\tilde{\omega}_{Y/k}^{\bullet}\otimes\mathbb{Q}\oplus\omega_{X_{\bigcdot}/W(k)}^{\bullet}/\mathrm{Fil}^{n}\otimes\mathbb{Q})\,.
\end{equation}
Here $\mathrm{Fil}^{n}$ is the Hodge filtration and $\iota_{\mathrm{dR}}$ is the composite map
\begin{equation*}
W_{\bigcdot}\tilde{\omega}_{Y/k}^{\bullet}\otimes\mathbb{Q}\twoheadrightarrow W_{\bigcdot}\omega_{Y/k}^{\bullet}\otimes\mathbb{Q}\xrightarrow{\simeq}\omega^{\bullet}_{X_{\bigcdot}/W(k)}\otimes\mathbb{Q}\rightarrow\omega_{X_{\bigcdot}/W(k)}^{\bullet}/\mathrm{Fil}^{n}\otimes\mathbb{Q}\,.
\end{equation*}
Note that the mapping cone of the monodromy operator $N$ is by definition the Hyodo-Kato complex $W_{\bigcdot}\tilde{\omega}^{\bullet}_{Y/k}$, so the diagram involving $N$ and $1-\varphi$ only in \eqref{HK diagram} is equivalent to the cone of the map $1-\varphi_{\cdot}$ on $W_{\bigcdot}\tilde{\omega}^{\bullet}_{Y/k}$. Since the right vertical map on $\omega_{X_{\bigcdot}/W(k)}^{\bullet}\otimes\mathbb{Q}/\mathrm{Fil}^{n}$ is the zero map, the map $\iota_{\mathrm{dR}}$ in \eqref{HK diagram} becomes the composite map $\iota_{\mathrm{dR}}$ in \eqref{syntomic cone}. Hence the homotopy limits of \eqref{HK diagram} and \eqref{syntomic cone} coincide.

We can further simplify the construction by introducing the Nygaard complexes on the level of $W_{\bigcdot}\tilde{\omega}_{Y/k}^{\bullet}$: for each $s\geq 0$ they are defined via an exact sequence
\begin{equation}
\label{Nygaard HK sequence}
0\rightarrow N^{s-1}W_{\bigcdot}\omega^{\bullet}_{Y/k}[-1]\xrightarrow{\wedge\theta}N^{s}W_{\bigcdot}\tilde{\omega}^{\bullet}_{Y/k}\rightarrow N^{s}W_{\bigcdot}\omega^{\bullet}_{Y/k}\rightarrow 0\,.
\end{equation} 
with relations $\varphi(\theta)=p\theta$, $d\theta=\theta d=0$ and $V(\theta)=\theta$.
\begin{lemma}\label{Frobenius invariants Nygaard}
For each $s\geq 0$ there is an exact sequence of pro-complexes
\begin{equation*}
0\rightarrow W_{\bigcdot}\tilde{\omega}^{s}_{Y/k,\log}[-s]\rightarrow N^{s}W_{\bigcdot}\tilde{\omega}^{\bullet}_{Y/k}\xrightarrow{1-\varphi_{s}} W_{\bigcdot}\tilde{\omega}^{\bullet}_{Y/k}\rightarrow 0
\end{equation*} 
on $Y_{\mathrm{\acute{e}t}}$.
\end{lemma}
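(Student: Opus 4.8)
The plan is to reduce the assertion to its trivial-log-base counterpart and then transport it across the two Hyodo-Kato exact sequences \eqref{HK sequence} and \eqref{Nygaard HK sequence}. Concretely, I would first establish (or cite, e.g.\ \cite{Hyo88,BEK14,NN16}, reducing \'{e}tale-locally to the smooth de Rham-Witt case via the standard local models of $(Y,M_{Y})$) the exact sequence of pro-complexes on $Y_{\mathrm{\acute{e}t}}$
\begin{equation*}
0\rightarrow W_{\bigcdot}\omega^{n}_{Y/k,\log}[-n]\rightarrow N^{n}W_{\bigcdot}\omega^{\bullet}_{Y/k}\xrightarrow{1-\varphi_{n}}W_{\bigcdot}\omega^{\bullet}_{Y/k}\rightarrow 0
\end{equation*}
for every $n\geq 0$: its content is that $1-\varphi_{n}$ is termwise surjective and that its kernel complex is $W_{\bigcdot}\omega^{n}_{Y/k,\log}$ placed in cohomological degree $n$. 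I expect this to be the only non-formal ingredient.

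Granting it, I note that the relations $\varphi(\theta)=p\theta$, $d\theta=0$ together with multiplicativity of $\varphi$ give, for a local section $x$ of $N^{r-1}W_{\bigcdot}\omega^{\bullet}_{Y/k}$,
\begin{equation*}
\varphi_{r}(\theta\wedge x)=\tfrac{1}{p^{r}}\varphi(\theta)\wedge\varphi(x)=\tfrac{1}{p^{r-1}}\theta\wedge\varphi(x)=\theta\wedge\varphi_{r-1}(x),
\end{equation*}
hence $(1-\varphi_{r})(\theta\wedge x)=\theta\wedge(1-\varphi_{r-1})(x)$; and since the Hyodo-Kato projection $W_{\bigcdot}\tilde\omega^{\bullet}_{Y/k}\twoheadrightarrow W_{\bigcdot}\omega^{\bullet}_{Y/k}$ intertwines the two $\varphi_{r}$'s, the operators $1-\varphi_{r}$ (on the $\tilde\omega$- and $\omega$-terms) and $1-\varphi_{r-1}$ assemble into a morphism from the Nygaard Hyodo-Kato sequence \eqref{Nygaard HK sequence} to the Hyodo-Kato sequence \eqref{HK sequence}. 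Applying the snake lemma in each cohomological degree to these termwise short exact sequences of \'{e}tale sheaves, and feeding in the displayed input sequence for $n=r$ and $n=r-1$ (surjectivity of the two outer vertical maps, and their kernels $W_{\bigcdot}\omega^{r}_{Y/k,\log}[-r]$ and $W_{\bigcdot}\omega^{r-1}_{Y/k,\log}[-(r-1)]$), I obtain: $1-\varphi_{r}$ on $N^{r}W_{\bigcdot}\tilde\omega^{\bullet}_{Y/k}$ is termwise surjective, the snake connecting maps vanish, and its kernel complex is concentrated in degree $r$ — the $[-1]$-shift in \eqref{Nygaard HK sequence} pushing $W_{\bigcdot}\omega^{r-1}_{Y/k,\log}[-(r-1)]$ into degree $r$ — where it sits in a short exact sequence of sheaves
\begin{equation*}
0\rightarrow W_{\bigcdot}\omega^{r-1}_{Y/k,\log}\xrightarrow{\wedge\theta}\ker(1-\varphi_{r})\rightarrow W_{\bigcdot}\omega^{r}_{Y/k,\log}\rightarrow 0.
\end{equation*}

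It then remains to identify this kernel with $W_{\bigcdot}\tilde\omega^{r}_{Y/k,\log}$. By construction the latter is the logarithmic part $W_{\bigcdot}\omega^{r}_{Y/k,\log}\oplus\theta\wedge W_{\bigcdot}\omega^{r-1}_{Y/k,\log}$ of the Hyodo-Kato decomposition $W_{\bigcdot}\tilde\omega^{r}_{Y/k}=W_{\bigcdot}\omega^{r}_{Y/k}\oplus\theta\wedge W_{\bigcdot}\omega^{r-1}_{Y/k}$, so it lies in the same extension of $W_{\bigcdot}\omega^{r}_{Y/k,\log}$ by $W_{\bigcdot}\omega^{r-1}_{Y/k,\log}$ via $\wedge\theta$; the inclusions $W_{\bigcdot}\tilde\omega^{r}_{Y/k,\log}\subseteq N^{r}W_{\bigcdot}\tilde\omega^{r}_{Y/k}$ (immediate, the Nygaard filtration being everything in matching degree by \eqref{Nygaard HK sequence}) and $W_{\bigcdot}\tilde\omega^{r}_{Y/k,\log}\subseteq\ker(1-\varphi_{r})$ (because the divided Frobenius fixes logarithmic symbols, including those involving $\theta$, by the displayed formula) then give a morphism of extensions restricting to the identity on both outer terms, hence an isomorphism by the five lemma. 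This yields $\ker(1-\varphi_{r})\cong W_{\bigcdot}\tilde\omega^{r}_{Y/k,\log}[-r]$ as complexes; the case $r=0$ is the classical Artin-Schreier-type sequence and is handled directly.

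The hard part is thus the input sequence over the trivial log base: termwise surjectivity of $1-\varphi_{n}$ rests on the topological nilpotence of the divided Frobenius on the pro-system of Nygaard filtrations, and the identification of its kernel with $W_{\bigcdot}\omega^{n}_{Y/k,\log}[-n]$ is a Hyodo-Kato / Kato-style local computation; once these are in place, everything downstream is the formal diagram chase above.
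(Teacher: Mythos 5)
Your formal reduction is sound and is essentially the paper's own: the paper assembles \eqref{HK sequence}, \eqref{Nygaard HK sequence} and the defining extension $0\to W_{\bigcdot}\omega^{r-1}_{Y/k,\log}[-r]\xrightarrow{\wedge\theta}W_{\bigcdot}\tilde\omega^{r}_{Y/k,\log}[-r]\to W_{\bigcdot}\omega^{r}_{Y/k,\log}[-r]\to 0$ into a $3\times 3$ diagram whose columns are exact and whose outer rows are the untilded sequences, so that the middle row is exact by the nine lemma; your snake-lemma-plus-five-lemma packaging of the same diagram, including the verification $(1-\varphi_{r})(\theta\wedge x)=\theta\wedge(1-\varphi_{r-1})(x)$, is equivalent to this. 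One small caution: you should not assert a direct sum decomposition $W_{\bigcdot}\tilde\omega^{r}_{Y/k}=W_{\bigcdot}\omega^{r}_{Y/k}\oplus\theta\wedge W_{\bigcdot}\omega^{r-1}_{Y/k}$ (the Hyodo-Kato sequence is an extension, not canonically split); but your five-lemma step only uses the extension structure, so this is cosmetic.

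The genuine gap is the input you black-box. The exactness of $0\to W_{\bigcdot}\omega^{n}_{Y/k,\log}[-n]\to N^{n}W_{\bigcdot}\omega^{\bullet}_{Y/k}\xrightarrow{1-\varphi_{n}}W_{\bigcdot}\omega^{\bullet}_{Y/k}\to 0$ is the entire content of the lemma, and it is not available by citation, nor by ``reducing \'{e}tale-locally to the smooth de Rham-Witt case'': a semistable variety is \'{e}tale-locally $\mathrm{Spec}\,k[T_{1},\ldots,T_{n}]/(T_{1}\cdots T_{r})$, which is not smooth, and $W_{\bigcdot}\omega^{\bullet}_{Y/k}$ is the Hyodo-Kato complex of this normal crossings variety, not the de Rham-Witt complex of a smooth scheme, so the smooth-case statement does not transport. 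The paper proves the untilded sequence directly: (i) $1-\varphi_{r}$ is bijective on $W_{n}\omega^{r+i}_{Y/k}$ for $i>0$ by the argument of \cite{Ill79}, I. Lemme 3.30; (ii) in degrees $<r$ it is bijective because on $N^{r}W_{\bigcdot}\omega^{i}_{Y/k}$ it becomes $\mathrm{id}-p^{r-1-i}V$, inverted by a $V$-adically convergent geometric series in the pro-system; (iii) in degree $r$ the exactness of \eqref{log kernel} rests on Lorenzon's logarithmic Hodge-Witt sequence \cite{Lor02}, Corollary 2.13, together with the explicit inverse $\psi+\psi^{2}+\cdots$ of $1-\varphi_{r}$ on $dVW_{n-1}\omega^{r-1}_{Y/k}$. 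None of these steps is formal, and (iii) in particular has no smooth-case surrogate; your proof is incomplete without them.
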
 
\begin{proof}
Consider the following commutative diagram
\begin{equation*}
\begin{tikzpicture}[descr/.style={fill=white,inner sep=1.5pt}]
        \matrix (m) [
            matrix of math nodes,
            row sep=2.5em,
            column sep=2.5em,
            text height=1.5ex, text depth=0.25ex
        ]
        { \ & 0 & 0 & 0 & \ \\       
        0 & W_{\bigcdot}\omega^{s-1}_{Y/k,\log}[-s] & N^{s-1}W_{\bigcdot}\omega^{\bullet}_{Y/k}[-1] & W_{\bigcdot}\omega_{Y/k}^{\bullet}[-1] & 0  \\
        0 & W_{\bigcdot}\tilde{\omega}^{s}_{Y/k,\log}[-s] & N^{s}W_{\bigcdot}\tilde{\omega}^{\bullet}_{Y/k} & W_{\bigcdot}\tilde{\omega}_{Y/k}^{\bullet} & 0 \\
    0 & W_{\bigcdot}\omega^{s}_{Y/k,\log}[-s] & N^{s}W_{\bigcdot}\omega^{\bullet}_{Y/k} & W_{\bigcdot}\omega_{Y/k}^{\bullet} & 0  \\  
    \ & 0 & 0 & 0 & \ \\ };

        \path[overlay,->, font=\scriptsize] 
        (m-1-2) edge (m-2-2)
        (m-1-3) edge (m-2-3)
        (m-1-4) edge (m-2-4)
        (m-2-2) edge node[right]{$\wedge\theta$} (m-3-2)
        (m-2-3) edge node[right]{$\wedge\theta$} (m-3-3)
        (m-2-4) edge node[right]{$\wedge\theta$} (m-3-4)
        (m-3-2) edge (m-4-2)
        (m-3-3) edge (m-4-3)
        (m-3-4) edge (m-4-4)
        (m-4-2) edge (m-5-2)
        (m-4-3) edge (m-5-3)
        (m-4-4) edge (m-5-4)
        (m-4-1) edge (m-4-2)
        (m-4-2) edge (m-4-3)
        (m-4-3) edge node[above] {$1-\varphi_{s}$}(m-4-4)
        (m-4-4) edge (m-4-5)
        (m-2-1) edge (m-2-2)
        (m-2-2) edge (m-2-3)
        (m-2-3) edge node[above] {$1-\varphi_{s-1}$}(m-2-4)
        (m-2-4) edge (m-2-5)
        (m-3-1) edge (m-3-2)
        (m-3-2) edge (m-3-3)
        (m-3-3) edge node[above] {$1-\varphi_{s}$}(m-3-4)
        (m-3-4) edge (m-3-5)
         ;
                                        
\end{tikzpicture}
\end{equation*}
The vertical sequences are exact; the rightmost sequence is \eqref{HK sequence}, the middle sequence is \eqref{Nygaard HK sequence} and the leftmost sequence is exact by the definition of $W_{r}\omega^{s}_{Y/k,\log}$ and $W_{r}\tilde{\omega}^{s}_{Y/k,\log}$. The statement of the lemma is therefore equivalent to the exactness of the sequence
\begin{equation*}
0\rightarrow W_{\bigcdot}\omega^{s}_{Y/k,\log}[-s]\rightarrow N^{s}W_{\bigcdot}\omega^{\bullet}_{Y/k}\xrightarrow{1-\varphi_{s}} W_{\bigcdot}\omega_{Y/k}^{\bullet}\rightarrow 0\,.
\end{equation*}
for each $s\geq 0$.
To see this, first note that $1-\varphi_{s}:W_{\bigcdot}\omega_{Y/k}^{s+i}\rightarrow W_{\bigcdot}\omega_{Y/k}^{s+i}$ is an isomorphism for all $i>0$ and $s\geq 0$ by the same proof as \cite[I. Lemme 3.30]{Ill79}. Next, observe that $1-\varphi_{s}:\tau_{<s}N^{s}W_{\bigcdot}\omega_{Y/k}^{\bullet}\rightarrow \tau_{<s}W_{\bigcdot}\omega_{Y/k}^{\bullet}$ is an isomorphism. Indeed, let $i\leq s-1$. Then for $\beta$ a local section of $W_{\bigcdot}\omega_{Y/k}^{i}$ we have $\beta=(p^{s-1-i}V-\mathrm{id})\alpha$ where $\alpha=-(p^{s-1-i}V)\displaystyle\sum_{m=0}^{\infty}(p^{s-1-i}V)^{m}\beta$, so $1-\varphi_{s}$ is surjective. On the other hand, if $\alpha$ is a local section of $W_{\bigcdot}\omega_{Y/k}^{i}$ such that $\alpha=p^{s-1-i}V\alpha$, we get $\alpha\in(p^{s-1-i}V)^{m}W_{\bigcdot}\omega_{Y/k}^{i}$ for all $m\geq 0$, and hence $\alpha=0$ so $1-\varphi_{s}$ is injective. Finally, we must show that the sequence
\begin{equation}
\label{log kernel}
0\rightarrow W_{r}\omega^{s}_{Y/k,\log}\rightarrow W_{r}\omega^{s}_{Y/k}/dVW_{r-1}\omega_{Y/k}^{s-1}\xrightarrow{1-\varphi_{s}} W_{r}\omega^{s}_{Y/k}/dW_{r}\omega_{Y/k}^{s-1}\rightarrow 0
\end{equation}
is exact for each $r\geq 1$. To see this, consider the following commutative diagram
\begin{equation*}
\begin{tikzpicture}[descr/.style={fill=white,inner sep=1.5pt}]
        \matrix (m) [
            matrix of math nodes,
            row sep=2.5em,
            column sep=1.5em,
            text height=1.5ex, text depth=0.25ex
        ]
        { \ & \ & 0 & 0 & \ \\       
        \ & \ & dVW_{r-1}\omega^{s-1}_{Y/k} & dW_{r}\omega_{Y/k}^{s-1}/dV^{r-1}\omega_{Y/k}^{s-1} & \  \\
        0 & W_{r}\omega^{s}_{Y/k,\log} & W_{r}\omega^{s}_{Y/k} & W_{r}\omega_{Y/k}^{s}/dV^{r-1}\omega_{Y/k}^{s-1} & 0 \\
    0 & W_{r}\omega^{s}_{Y/k,\log} & W_{r}\omega^{s}_{Y/k}/dVW_{r-1}\omega^{s-1}_{Y/k} & W_{r}\omega_{Y/k}^{s}/dW_{r}\omega_{Y/k}^{s-1} & 0  \\  
    \ & \ & 0 & 0 & \ \\ };

        \path[overlay,->, font=\scriptsize] 
        
        (m-1-3) edge (m-2-3)
        (m-1-4) edge (m-2-4)
        (m-2-3) edge (m-3-3)
        (m-2-4) edge (m-3-4)
        (m-3-2) edge node[right]{$=$}(m-4-2)
        (m-3-3) edge (m-4-3)
        (m-3-4) edge (m-4-4)
        (m-4-3) edge (m-5-3)
        (m-4-4) edge (m-5-4)
        (m-4-1) edge (m-4-2)
        (m-4-2) edge (m-4-3)
        (m-4-3) edge node[above] {$1-\varphi_{s}$}(m-4-4)
        (m-4-4) edge (m-4-5)
        (m-2-3) edge node[above] {$1-\varphi_{s}$}(m-2-4)
        (m-3-1) edge (m-3-2)
        (m-3-2) edge (m-3-3)
        (m-3-3) edge node[above] {$1-\varphi_{s}$}(m-3-4)
        (m-3-4) edge (m-3-5)
         ;
                                        
\end{tikzpicture}
\end{equation*}
The two vertical sequences are obviously exact, and the middle horizontal sequence is exact by \cite[Corollary 2.13]{Lor02}. Therefore \eqref{log kernel} is exact if and only if $1-\varphi_{s}:dVW_{r-1}\omega^{s-1}_{Y/k}\rightarrow dW_{r}\omega_{Y/k}^{s-1}/dV^{r-1}\omega_{Y/k}^{s-1}$ is an isomorphism. The map $V:dW_{r}\omega_{Y/k}^{s-1}\rightarrow W_{r+1}\omega_{Y/k}^{s}$ factors through $p:W_{r}\omega_{Y/k}^{s}\rightarrow W_{r+1}\omega^{s}_{Y/k}$, as
\begin{equation*}
\begin{tikzpicture}[descr/.style={fill=white,inner sep=1.5pt}]
        \matrix (m) [
            matrix of math nodes,
            row sep=2.5em,
            column sep=1.5em,
            text height=1.5ex, text depth=0.25ex
        ]
        { dW_{r}\omega_{Y/k}^{s-1} & \ & W_{r+1}\omega_{Y/k}^{s}\\
        \ & W_{r}\omega_{Y/k}^{s} & \ \\ };

        \path[overlay,->, font=\scriptsize] 
        
        (m-1-1) edge node[above]{$V$}(m-1-3)
        (m-2-2) edge node [below right]{$p$} (m-1-3) 
        (m-1-1) edge node [below left]{$\psi$} (m-2-2);
                                        
\end{tikzpicture}
\end{equation*}
and since $Vd=pdV$, the map $\psi$ has image contained in $dVW_{n-1}\omega_{Y/k}^{r-1}$. The map $\psi+\psi^{2}+\psi^{3}+\cdots$ is the inverse of $1-\varphi_{s}:dVW_{r-1}\omega^{s-1}_{Y/k}\rightarrow dW_{r}\omega_{Y/k}^{s-1}/dV^{r-1}\omega_{Y/k}^{s-1}$.
\end{proof}
Since $N^{n}W_{\bigcdot}\tilde{\omega}^{\bullet}_{Y/k}\otimes\mathbb{Q}\simeq W_{\bigcdot}\tilde{\omega}^{\bullet}_{Y/k}\otimes\mathbb{Q}$ we get that \eqref{syntomic cone} is quasi-isomorphic to
\begin{equation*}
\mathrm{Cone}(W_{\bigcdot}\tilde{\omega}_{Y/k,\log}^{n}[-n]\otimes\mathbb{Q}\rightarrow \omega_{X_{\bigcdot}/W(k)}^{\bullet}/\mathrm{Fil}^{n}\otimes\mathbb{Q})\,.
\end{equation*}
We can then modify the definition of $\mathfrak{S}_{\log, X_{\bigcdot}}(n)_{\mathrm{\acute{e}t}}$ again to get the following interpretation
\begin{equation}
\mathfrak{S}_{\log, X_{\bigcdot}}(n)_{\mathrm{\acute{e}t}}=\mathrm{Cone}(W_{\bigcdot}\tilde{\omega}_{Y/k,\log}^{n}[-n]\otimes\mathbb{Q}\rightarrow\omega_{X_{\bigcdot}/W(k)}^{<n}\otimes\mathbb{Q})
\end{equation}
where the map is defined by the composition
\begin{align*}
W_{\bigcdot}\tilde{\omega}_{Y/k,\log}^{n}[-n]\otimes\mathbb{Q}\rightarrow W_{\bigcdot}\omega_{Y/k,\log}^{n}[-n]\otimes\mathbb{Q}\rightarrow 
& N^{n}W_{\bigcdot}\omega_{Y/k}^{\bullet}\otimes\mathbb{Q}\simeq W_{\bigcdot}\omega_{Y/k}^{\bullet}\otimes\mathbb{Q}\\
&\simeq\omega_{X_{\bigcdot}/W(k)}^{\bullet}\otimes\mathbb{Q}\rightarrow\omega_{X_{\bigcdot}/W(k)}^{<n}\otimes\mathbb{Q}\,.
\end{align*}
where $ W_{\bigcdot}\omega_{Y/k}^{\bullet}\otimes\mathbb{Q}\simeq\omega_{X_{\bigcdot}/W(k)}^{\bullet}\otimes\mathbb{Q}$ is the Hyodo-Kato isomorphism. Then we still have that $R\Gamma(X,\mathfrak{S}_{\log, X_{\bigcdot}}(n)_{\mathrm{\acute{e}t}})$ is quasi-isomorphic to the Nekov\'{a}\v{r}-Nizio{\l} complex $R\Gamma(X,s_{\log}(n))$. By definition, we have have an exact triangle
\begin{equation}
\label{etale fundamental triangle}
\omega_{X_{\bigcdot}/W(k)}^{<n}\otimes\mathbb{Q}[-1]\rightarrow\mathfrak{S}_{\log, X_{\bigcdot}}(n)_{\mathrm{\acute{e}t}}\rightarrow W_{\bigcdot}\tilde{\omega}_{Y/k,\log}^{n}[-n]\otimes\mathbb{Q}\xrightarrow{+1}
\end{equation}
in $\mathbb{Q}\otimes D_{\mathrm{pro}}(Y_{\mathrm{\acute{e}t}})$. Define $\mathfrak{S}_{\log, X_{\bigcdot}}(n):=\tau_{\leq n}R\epsilon_{\ast}\mathfrak{S}_{\log, X_{\bigcdot}}(n)_{\mathrm{\acute{e}t}}$ where $\epsilon:(X_{\bigcdot})_{\mathrm{\acute{e}t}}\rightarrow (X_{\bigcdot})_{\mathrm{Zar}}$ is the morphism of sites. Since we have an isomorphism
\begin{equation*}
\epsilon_{\ast}\omega_{X_{\bigcdot}/W(k)}^{<n}\otimes\mathbb{Q}[-1]\xrightarrow{\sim}R\epsilon_{\ast}\omega_{X_{\bigcdot}/W(k)}^{<n}\otimes\mathbb{Q}[-1]
\end{equation*}
in $D_{\mathrm{pro}}(Y_{\mathrm{Zar}})$, the complex $R\epsilon_{\ast}\omega_{X_{\bigcdot}/W(k)}^{<n}\otimes\mathbb{Q}[-1]$ has cohomological support in degrees $[1,n]$. By \cite[Lemma A.1]{BEK14}, applying $\tau_{\leq n}\circ R\epsilon_{\ast}$ to \eqref{etale fundamental triangle} therefore gives an exact triangle
  \begin{equation}
\label{fundamental triangle}
\omega_{X_{\bigcdot}/W(k)}^{<n}\otimes\mathbb{Q}[-1]\rightarrow\mathfrak{S}_{\log, X_{\bigcdot}}(n)\rightarrow W_{\bigcdot}\tilde{\omega}_{Y/k,\log}^{n}[-n]\otimes\mathbb{Q}\xrightarrow{+1}
\end{equation}
in $\mathbb{Q}\otimes D_{\mathrm{pro}}(Y_{\mathrm{Zar}})$, which is a log-syntomic analogue of the ``Fundamental triangle'' of \cite[Theorem 5.4]{BEK14}.

In order to glue a log-syntomic complex with the log-motivic complex $\mathbb{Z}_{\log,Y}(n)$ along the logarithmic Hyodo-Kato sheaf $W_{\bigcdot}\tilde{\omega}^{n}_{Y/k,\log}$ using the canonical map 
\begin{equation*}
\mathbb{Z}_{\log,Y}(n)\rightarrow\mathcal{H}^{n}(\mathbb{Z}_{\log,Y}(n))[-n]\simeq\mathcal{K}_{\log,Y,n}^{\mathrm{Mil}}[-n]\rightarrow W_{\bigcdot}\tilde{\omega}_{Y/k,\log}^{n}[-n]
\end{equation*}
we need an integral version of the complex $\mathfrak{S}_{\log,X_{\bigcdot}}(n)$. Let $M_{X}$ be the divisorial log-structure associated to $Y\hookrightarrow X$, and for each $m\in\mathbb{N}$ let $M_{X_{m}}$ be the pullback log-structure on $X_{m}$. Let $(\mathrm{Spec}\,W_{m}(k),W_{m}(L))\rightarrow(\mathrm{Spec}\,W_{m}(k)[T],\mathscr{L})$ be the closed immersion with log-structure $\mathscr{L}$ on $\mathrm{Spec}\,W_{m}(k)[T]$ associated to $\mathbb{N}\rightarrow W_{m}(k)[T]$, $1\mapsto T$ as in \cite[(3.6)]{HK94}. Then $(\mathrm{Spec}\,W_{m}[T],\mathscr{L})\rightarrow\mathrm{Spec}\,W_{m}(k)$ equipped with the trivial log-structure is smooth. Let $(X^{\bigcdot},M^{\bigcdot})\hookrightarrow (Z^{\bigcdot},N^{\bigcdot})$ be an embedding system for $(X,M_{X})\rightarrow\mathrm{Spec}\,W(k)$ which -- under the composite map $(Y,M_{Y})\rightarrow (X^{\bigcdot},M^{\bigcdot})\rightarrow (Z^{\bigcdot},N^{\bigcdot})$ -- is an embedding system for $(Y,M_{Y})\rightarrow(\mathrm{Spec}\,W_{m}(k)[T],\mathscr{L})$. We may therefore use it for the integral definition of the log-syntomic complex due to Kato \cite{Kat94} and Tsuji \cite{Tsu99}, which we now recall. Note that $(Z^{\bigcdot},N^{\bigcdot})$ is smooth over $(\mathrm{Spec}\,W(k)[T],\mathscr{L})$ and we can assume that
\begin{equation*}
\begin{tikzpicture}[descr/.style={fill=white,inner sep=1.5pt}]
        \matrix (m) [
            matrix of math nodes,
            row sep=2.5em,
            column sep=1.5em,
            text height=1.5ex, text depth=0.25ex
        ]
        { (X^{\bigcdot},M^{\bigcdot}) & (Z^{\bigcdot},N^{\bigcdot}) \\
        (\mathrm{Spec}\,W(k),L) & (\mathrm{Spec}\,W(k)[T],\mathscr{L}) \\ };

        \path[overlay,->, font=\scriptsize] 
        
        (m-1-1) edge (m-1-2)
        (m-2-1) edge node[above]{$T\mapsto p$} (m-2-2) 
        (m-1-1) edge (m-2-1)
        (m-1-2) edge (m-2-2);
                                        
\end{tikzpicture}
\end{equation*}
is cartesian. Let $X_{m}^{i}=X^{i}\otimes\mathbb{Z}/p^{m}\mathbb{Z}$ and $Z^{i}_{m}=Z^{i}\otimes\mathbb{Z}/p^{m}\mathbb{Z}$, with induced log-structures $M_{m}^{i}$ and $N_{m}^{i}$, respectively. We assume that there exists a lifting of Frobenius $F:(Z^{\bigcdot},N^{\bigcdot})\rightarrow (Z^{\bigcdot},N^{\bigcdot})$ of the absolute Frobenius on $(Z_{1}^{\bigcdot},N_{1}^{\bigcdot})$. Let $(D_{m}^{i},M_{D_{m}^{i}})\rightarrow (Z^{i}_{m},N^{i}_{m})$ be the PD-envelope of $(X^{i}_{m},M^{i}_{m})\rightarrow (Z^{i}_{m},N^{i}_{m})$, and let $J^{[n]}_{D^{i}_{m}}\subset\mathcal{O}_{D^{i}_{m}}$ be the $n$-th divided power of $J_{D_{m}^{i}}:=\ker(\mathcal{O}_{D_{m}^{i}}\rightarrow\mathcal{O}_{X_{m}^{i}})$. Let $j^{\log}_{m,X^{\bigcdot}}(n)$ be the complex on the \'{e}tale site of $X^{\bigcdot}$ which on each $X^{i}$ is the complex 
\begin{equation*}
J^{[n]}_{D_{m}^{i}}\xrightarrow{d}J^{[n-1]}_{D_{m}^{i}}\otimes_{\mathcal{O}_{Z_{m}^{i}}}\omega^{1}_{Z_{m}^{i}/W_{m}(k)}\xrightarrow{d}\cdots\xrightarrow{d}J^{[n-q]}_{D_{m}^{i}}\otimes_{\mathcal{O}_{Z_{m}^{i}}}\omega^{q}_{Z_{m}^{i}/W_{m}(k)}\xrightarrow{d}\cdots\,.
\end{equation*}
Let $\varphi:\mathcal{O}_{D_{m}^{i}}\rightarrow\mathcal{O}_{D_{m}^{i}}$ be the Frobenius induced by $F$. Then we have $\varphi(J^{[n]}_{D_{m}^{i}})\subset p^{n}\mathcal{O}_{D_{m}^{i}}$. Define $p^{-n}\varphi:J^{[n]}_{D_{m}^{i}}\rightarrow\mathcal{O}_{D_{m}^{i}}$ by $p^{-n}\varphi(a\mod p^{m})=b\mod p^{m}$ for $a\in J^{[n]}_{D_{m+n}^{i}}$ and $b\in\mathcal{O}_{D_{m+n}^{i}}$ such that $\varphi(a)=p^{n}b$. This induces a homomorphism of complexes $p^{-n}\varphi:j^{\log}_{m,X^{\bigcdot}}(n)\rightarrow j^{\log}_{m,X^{\bigcdot}}(0)$ which is $p^{q-n}\varphi$ on $J^{[n-q]}_{D_{m}^{i}}$ and $p^{-q}\varphi$ on $\omega_{Z_{m}^{i}/W_{m}(k)}^{q}$. We make the assumption that there exist sections $T_{1},\ldots, T_{d}$ of $M_{Z}$ such that $d\log T_{i}$ $(1\leq i\leq d)$ form a basis of $\omega^{1}_{Z/W(k)}$ and $F^{\ast}(T_{i})=T_{i}^{p}$ ($1\leq i\leq d$) (see \cite[(2.1.1)]{Tsu99}). Define ${s_{m,X^{\bigcdot}}^{\log}}'(n)$ to be the mapping fibre of $1-p^{-n}\varphi:j_{m,X^{\bigcdot}}^{\log}(n)\rightarrow j_{m,X^{\bigcdot}}^{\log}(0)$, and set ${s_{m,X}^{\log}}'(n)=R\theta_{\ast}{s_{m,X^{\bigcdot}}^{\log}}'(n)$ where $\theta:(X^{\bigcdot})_{\mathrm{\acute{e}t}}^{\sim}\rightarrow (X)^{\sim}_{\mathrm{\acute{e}t}}$ is the morphism induced by the hypercovering $X^{\bigcdot}\rightarrow X$.

Next we shall recall Tsuji's definition of log-syntomic regulators \cite[\S2.2]{Tsu99}. Let $C_{m}$ be the complex (which is quasi-isomorphic to $\theta^{\ast}M_{X_{m}}^{\mathrm{gp}}[-1]$) given by
\begin{align*}
& 1+J_{D_{m}^{\bigcdot}}\rightarrow M_{D_{m}^{\bigcdot}}^{\mathrm{gp}} \\
& \ \deg 0 \ \ \ \ \ \ \deg 1
\end{align*}
Define a homomorphism $C_{m+1}\rightarrow {s_{m,X^{\bigcdot}}^{\log}}'(1)$ 
given by
\begin{align*}
& 1+J_{D_{m+1}^{i}}\rightarrow{s_{m,X^{i}}^{\log}}'(1)^{0}=J_{D_{m}^{i}} \\
& \ \ \ \ \ a\mapsto\log a\mod p^{m}
\end{align*}
in degree $0$ and
\begin{align*}
& M_{D_{m+1}^{i}}^{\mathrm{gp}}\rightarrow{s_{m,X^{i}}^{\log}}'(1)^{1}=\mathcal{O}_{D_{m}^{i}}\otimes_{\mathcal{O}_{Z_{m}^{i}}}\omega^{1}_{Z_{m}^{i}/W_{m}(k)}\oplus\mathcal{O}_{D_{m}^{i}} \\
& \ \ \ \ b\mapsto(d\log b\mod p^{m}, \, p^{-1}\log b^{p}\varphi(b)^{-1})
\end{align*}
in degree $1$. Note that $\log(b^{p}\varphi(b)^{-1})$ is in $p\mathcal{O}_{D_{m+1}^{i}}\xleftarrow[p]{\sim}\mathcal{O}_{D_{m}^{i}}$ because $b^{p}\varphi(b)^{-1}\in 1+p\mathcal{O}_{D_{m+1}^{i}}$. By composing with $R\theta_{\ast}$ we get a map
\begin{equation}\label{map}
M_{X_{m+1}}^{\mathrm{gp}}\rightarrow {s_{m,X}^{\log}}'(1)[1]\,.
\end{equation}
For any $0\leq n,n',n+n'\leq p-1$ there is a product structure
\begin{equation*}
{s_{m,X^{\bigcdot}}^{\log}}'(n)\otimes{s_{m,X^{\bigcdot}}^{\log}}'(n')\rightarrow{s_{m,X^{\bigcdot}}^{\log}}'(n+n')
\end{equation*}
\cite[\S2.2]{Tsu99}. Applying $R\theta_{\ast}$ gives
\begin{equation}\label{product}
{s_{m,X}^{\log}}'(n)\otimes^{\mathbb{L}}{s_{m,X}^{\log}}'(n')\rightarrow{s_{m,X}^{\log}}'(n+n')\,.
\end{equation}
Together, \eqref{map} and \eqref{product} induce symbols maps
\begin{equation*}
(M_{X_{m+1}}^{\mathrm{gp}})^{\otimes q}\rightarrow\mathcal{H}^{q}({s_{m,X}^{\log}}'(q))
\end{equation*}
for each $q\geq 0$ \cite[(2.2.1)]{Tsu99}. These constructions are independent of the choice of  embedding system and lifting of Frobenius. We have the following
\begin{prop}\cite[Lemma 3.4.11, Proposition 2.4.1]{Tsu99} The symbol map $(M_{X_{m+1}}^{\mathrm{gp}})^{\otimes q}\rightarrow\mathcal{H}^{q}({s_{m,X}^{\log}}'(q))$ is surjective.
\end{prop}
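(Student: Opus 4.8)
The plan is to reduce the statement to the local computations of Tsuji \cite{Tsu99}. Both the source $(M_{X_{n+1}}^{\mathrm{gp}})^{\otimes q}$ and the target $\mathcal{H}^{q}({s_{n,X}^{\log}}'(q))$ are sheaves on $X_{\mathrm{\acute{e}t}}$, and surjectivity of a morphism of \'etale sheaves may be tested on stalks, so we may shrink $X$ in the \'etale topology. In particular we may assume $X=\mathrm{Spec}\,R$ is small affine, so that the embedding system can be taken to be a single closed immersion $(X,M_{X})\hookrightarrow(Z,N)$ with $(Z,N)$ smooth over $(\mathrm{Spec}\,W(k)[T],\mathscr{L})$, admitting a Frobenius lift $F$ and coordinates $T_{1},\dots,T_{d}$ with $F^{\ast}(T_{i})=T_{i}^{p}$ as in \cite[(2.1.1)]{Tsu99}. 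Since the constructions recalled above do not depend on these choices, ${s_{n,X}^{\log}}'(q)$ is then the explicit complex built from the PD-envelope $D_{n}$ of $X_{n}\hookrightarrow Z_{n}$, and \eqref{map} and \eqref{product} recover Tsuji's syntomic symbol map verbatim.

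From the mapping-fibre presentation ${s_{n,X}^{\log}}'(q)=\mathrm{Fib}\bigl(1-p^{-q}\varphi\colon j_{n,X}^{\log}(q)\rightarrow j_{n,X}^{\log}(0)\bigr)$ one obtains, on cohomology sheaves, an exact sequence exhibiting $\mathcal{H}^{q}({s_{n,X}^{\log}}'(q))$ as an extension of $\ker\bigl(1-p^{-q}\varphi\ \text{on}\ \mathcal{H}^{q}(j_{n,X}^{\log}(q))\bigr)$ by a quotient of $\mathcal{H}^{q-1}(j_{n,X}^{\log}(0))$. Here $j_{n,X}^{\log}(0)$ is the logarithmic de Rham complex of $D_{n}$ over $W_{n}(k)$, whose cohomology is the log-crystalline cohomology of $X_{n}$, whereas $j_{n,X}^{\log}(q)$ computes the $q$-th divided-power filtration step, and a symbol $\{b_{1},\dots,b_{q}\}$ maps to the class of $d\log b_{1}\wedge\cdots\wedge d\log b_{q}$ there. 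It therefore suffices to show that these $d\log$-classes generate $\ker(1-p^{-q}\varphi)$ on $\mathcal{H}^{q}(j_{n,X}^{\log}(q))$, and that the contribution from $\mathcal{H}^{q-1}(j_{n,X}^{\log}(0))$ is hit by the $p^{-1}\log(b^{p}\varphi(b)^{-1})$-components of degree-$1$ symbols.

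Both assertions I would establish by d\'evissage in $n$: applying the snake lemma to multiplication by $p$ (and to $V$ and $F$) between the level-$n$ and level-$(n-1)$ versions of ${s_{n,X}^{\log}}'(q)$ and of its building blocks reduces everything to the case $n=1$. At level $1$, Kato's comparison identifies $\mathcal{H}^{q}({s_{1,X}^{\log}}'(q))$ with a logarithmic Hyodo-Kato Hodge-Witt sheaf (essentially $\omega^{q}_{Y/k,\log}$), and surjectivity of the symbol map onto it is precisely the Bloch-Kato-Gabber theorem in the semistable form recorded in the proof of Proposition \ref{mod-p K-theory}: $d\log$ of units, together with the $d\log$ of the standard-log-point section, generate $\omega^{q}_{Y/k,\log}$ \'etale-locally. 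Granting the base case, the inductive step is a routine diagram chase, and one recovers \cite[Lemma 3.4.11, Proposition 2.4.1]{Tsu99}.

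The main obstacle is the explicit local analysis of $\mathcal{H}^{q}({s_{n,X}^{\log}}'(q))$ through the divided-power de Rham complex of $D_{n}$: one has to pin down $\ker(1-p^{-q}\varphi)$ on $\mathcal{H}^{q}(j_{n,X}^{\log}(q))$ as a concrete logarithmic sheaf and check that the degree-$(q-1)$ cokernel term is absorbed by the symbol map. This is where the hypothesis $q\le r<p$ is essential: it makes the divided powers $J^{[q]}_{D_{n}}$ and the divided Frobenius $p^{-q}\varphi$ well-behaved, and forces the mod-$p$ logarithmic de Rham cohomology to degenerate in the relevant range. All of this is carried out in \cite{Tsu99}, so in the end the proof amounts to the reduction to \cite[Lemma 3.4.11, Proposition 2.4.1]{Tsu99} described above.
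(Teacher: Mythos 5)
The paper offers no proof of this proposition at all---it is imported verbatim from Tsuji \cite[Lemma 3.4.11, Proposition 2.4.1]{Tsu99}---and your argument likewise ends by deferring to that reference, so you are taking essentially the same route. Your intermediate outline (\'etale-local reduction to a single embedding with Frobenius lift, the mapping-fibre long exact sequence, d\'evissage in $n$ with the base case handled by Bloch--Kato--Gabber) is consistent with how Tsuji argues, up to the minor point that for the unmodified complex ${s_{1,X}^{\log}}'(q)$ the level-one cohomology is the logarithmic part of $\tilde{\omega}^{q}$ (it still sees $d\log T$) rather than of $\omega^{q}$.
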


It follows from \cite[Proposition 3.8]{NN16} that the complex $R\Gamma(X,{s_{\bigcdot,X}^{\log}}'(n)\otimes\mathbb{Q})$ is isomorphic to the complex $R\Gamma(X,\mathfrak{S}_{\log,X_{\bigcdot}}(n))$ which we defined before. A crucial point for this comparison is the existence of an isomorphism 
\begin{equation*}
R\Gamma(X/W(k),\mathcal{O}_{\mathrm{cris}}/J^{[n]}_{X/W(k)})_{\mathbb{Q}}\cong R\Gamma_{\mathrm{dR}}(X_{K})/\mathrm{Fil}^{n}
\end{equation*}
which links the log-crystalline cohomology of $(X,M_{X})$ over $W(k)$ equipped with the trivial log-structure to the de Rham cohomology of the generic fibre. This is proven in \cite[Corollary 2.4]{NN16} and is a consequence of Beilinson's comparison (\cite[Theorem 2.1]{NN16}) using derived log de Rham complexes (\cite[(1.9.2)]{Bei13}). It was also proven in \cite[Lemma 2.7]{Lan99} based on the original proof of Kato-Messing \cite[Lemma 4.5]{KM92} for syntomic schemes in the absence of log-structures.

For a $W(k)$-scheme $X$ with semistable reduction let $X_{m},M_{X_{m}},\iota_{m}$ be defined at the beginning of this section. Then $X_{m}$ is equipped with the log-structure $M_{X_{m}}=i_{m}^{\ast}M_{X}$ locally defined by 
\begin{align*}
& \mathbb{N}^{r}\rightarrow\mathcal{O}_{X_{m}} \\
& e_{i}\mapsto\pi_{i}^{(m)}
\end{align*}
if $X$ is locally given by $\mathrm{Spec}\,W(k)[T_{1},\ldots, T_{n}]/(\pi_{1}\cdots\pi_{r}-p)$ and where $\pi_{i}^{(m)}=\pi_{i}\mod p^{m}$. Let $j:X_{m}^{\mathrm{sm}}\hookrightarrow X_{m}$ be the open subscheme of $X_{m}$ such that $X_{m}^{\mathrm{sm}}\rightarrow\mathrm{Spec}\,W_{m}(k)$ is smooth. We consider the log-structure $N_{X_{m}}$ associated to 
\begin{align*}
& \mathbb{N}^{r}\rightarrow\mathcal{O}_{X_{m}} \\
& e_{i}\mapsto g_{i}^{(m)}:=\pi_{i}^{(m)}+\prod_{j\neq i}\pi_{j}^{(m)}\,.
\end{align*}
Then $\underline{N}_{X_{m}}:=\mathrm{Im}(N_{X_{m}})\subset\mathcal{O}_{X_{m}}\cap j_{\ast}(\mathcal{O}^{\ast}_{X_{m}^{\mathrm{sm}}}$.
\begin{rem}\label{1 plus p sequence}
We have an exact sequence (with $X_{1}=Y$)
\begin{equation*}
0\rightarrow U_{1}\rightarrow\underline{N}_{X_{m}}^{\mathrm{gp}}\rightarrow\underline{N}_{Y}^{\mathrm{gp}}\rightarrow 0
\end{equation*}
where $U_{1}:=\langle 1+px\, |\, x\in\mathcal{O}_{X_{m}}\rangle$.
\end{rem}

Define $\mathcal{K}^{\mathrm{Mil}}_{\log,X_{m},n}$ to be the Zariski sheafification of the presheaf on $X_{m}$ given by
\begin{equation*}
U\mapsto (\underline{N}_{X_{m}}(U)^{\mathrm{gp}})^{\otimes n}/I_{m}
\end{equation*}
where $I_{m}$ is the subgroup generated by elements of the form $a\otimes(1-a)$ with $a,1-a\in\underline{N}_{X_{m}}(U)^{\mathrm{gp}}$, those of the form $a\otimes(-a)$ with $a\in\underline{N}_{X_{m}}(U)^{\mathrm{gp}}$, and those of the form $g_{I}^{(m)n_{I}}x\otimes(1-\pi_{I}^{(m)n_{I}}x)$ ranging over subsets $I\subset\{1,\ldots, r\}$, where $g_{I}^{(m)n_{I}}:=\displaystyle\prod_{i\in I}g_{i}^{(m)n_{i}}$ with $n_{i}\geq 0$, $\pi_{I}^{(m)n_{I}}:=\displaystyle\prod_{i\in I}\pi_{i}^{(m)n_{i}}$ with $n_{i}\geq 0$, and $x\in\mathcal{O}_{X_{m}}(U)^{\ast}$. Consider again the surjective symbol map
\begin{equation*}
(M_{X_{m+1}}^{\mathrm{gp}})^{\otimes q}\rightarrow\mathcal{H}^{q}({s_{m,X}^{\log}}'(q))
\end{equation*}
of \cite{Tsu99} and \cite{Kat87}. For $x\in\mathcal{O}_{X_{m+1}}(U)^{\ast}$, $R_{\mathrm{log-syn}}(x)\in H^{1}({s_{m,X}^{\log}}'(1))$ is defined as for $M_{X_{m+1}}^{\mathrm{gp}}$. For $g_{i}^{(m+1)}\in N_{X_{m+1}}(U)$, define $R_{\mathrm{log-syn}}(g_{i}^{(m+1)}):=R_{\mathrm{log-syn}}(e_{i})$ where $e_{i}\in M_{X_{m+1}}$ is the element mapping to $\pi^{(m+1)}_{i}$ under $M_{X_{m+1}}\rightarrow\mathcal{O}_{X_{m+1}}$. This extends to a map
\begin{equation*}
R_{\mathrm{log-syn}}:(\underline{N}_{X_{m+1}}^{\mathrm{gp}})^{\otimes q}\rightarrow\mathcal{H}^{q}({s_{m,X}^{\log}}'(q)_{\mathrm{\acute{e}t}})
\end{equation*}
which factors through
\begin{equation}\label{log-syntomic regulator}
R_{\mathrm{log-syn}}:\mathcal{K}_{\log, X_{m+1}, q}^{\mathrm{Mil}}\rightarrow\mathcal{H}^{q}({s_{m,X}^{\log}}'(q)_{\mathrm{\acute{e}t}})\,.
\end{equation}

\begin{prop}\label{ses prop}
We have an exact sequence
\begin{equation*}
0\rightarrow\frac{p\omega^{q-1}_{X_{m}/W_{m}(k)}}{p^{2}d\omega^{q-2}_{X_{m}/W_{m}(k)}}\rightarrow\mathcal{H}^{q}({s_{m,X}^{\log}}'(q)_{\mathrm{\acute{e}t}})\rightarrow W_{m}\omega^{q}_{Y/k,\log}\rightarrow 0
\end{equation*}
where the second map is defined by
\begin{equation*}
(d\log b_{1}\wedge\cdots\wedge d\log b_{q},s_{\varphi,q}(\{b_{1},\ldots,b_{q}\}))\mapsto d\log\bar{b}_{1}\wedge\cdots\wedge d\log \bar{b}_{q}
\end{equation*} 
for $b_{i}\in M_{X_{m+1}}^{\mathrm{gp}}$, where $s_{\varphi,q}(\{b_{1},\ldots,b_{q}\})$ is defined as in \cite[2.7 pg 208]{Kur98} (see also \cite[Lemma 2.4.6]{Tsu99}), where $\bar{b}_{i}$ is the image of $b_{i}$ in $M_{Y}^{\mathrm{gp}}$ and $d\log$ is the Hyodo-Kato map \cite[(1.1)]{HK94}. The first map is defined as follows: take a lifting of $z=p\omega\in p\omega^{q-1}_{X_{m}/W_{m}(k)}$ in $p\mathcal{O}_{D_{m}}\otimes_{\mathcal{O}_{Z_{m}}}\omega^{q-1}_{Z_{m}/W_{m}(k)}$, say $px\tilde{\omega}$ with $\tilde{\omega}=d\log b_{2}\wedge\cdots\wedge d\log b_{q}$. Then the image of $z$ under the first map is the class of
\begin{equation*}
\left(d\log(\exp px)\tilde{\omega},s_{\varphi,q}(\{\exp(px),b_{2},\ldots,b_{q}\})\right)\,.
\end{equation*}
It is clear that the class of this element is well-defined in $\mathcal{H}^{q}({s_{m,X}^{\log}}'(q))$. Note that, in order to simplify the notation, we omit the index and work with an embedding $X_{m}\hookrightarrow D_{m}$.
\end{prop}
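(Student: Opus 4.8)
The plan is to reduce everything to the local computation that the statement already sets up: after passing to a small neighbourhood we fix a single embedding $X_n\hookrightarrow D_n$ into the PD-envelope of a smooth lift $Z_n/W_n(k)$ equipped with a Frobenius lift $F^{\ast}T_i=T_i^{p}$, so that $S_{n,X,\log}(q)_{\mathrm{\acute{e}t}}$ is computed by the mapping fibre of $1-p^{-q}\varphi\colon\tilde{j}_n(q)\to\tilde{j}_n(0)$, where $\tilde{j}_n(r):=j^{\log}_{n,X}(r)/\langle d\log T\rangle$ (independence of the chart and of $F$, and the comparison with the $R\theta_{\ast}$-version, being the argument of \cite[\S2.2]{Tsu99}). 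The mapping-fibre triangle then yields, for each $q$, an exact sequence of cohomology sheaves
\begin{equation*}
\mathcal H^{q-1}(\tilde{j}_n(q))\xrightarrow{\,1-\varphi_q\,}\mathcal H^{q-1}(\tilde{j}_n(0))\to\mathcal H^{q}(S_{n,X,\log}(q)_{\mathrm{\acute{e}t}})\to\mathcal H^{q}(\tilde{j}_n(q))\xrightarrow{\,1-\varphi_q\,}\mathcal H^{q}(\tilde{j}_n(0)),
\end{equation*}
so it remains to compute the kernel of the right-hand map (it will be $W_n\omega^{q}_{Y/k,\log}$), the cokernel of the left-hand map (it will be $p\omega^{q-1}_{X_n/W_n(k)}/pd\omega^{q-2}_{X_n/W_n(k)}$), and to check $\mathcal H^{i}(S_{n,X,\log}(q)_{\mathrm{\acute{e}t}})=0$ for $i>q$.

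First I would dispose of the high-degree contribution: on the degree-$j$ term of $\tilde{j}_n(q)$ with $j>q$ the operator $p^{-q}\varphi$ is $p^{\,j-q}$ times an integral operator, so $1-p^{-q}\varphi$ is invertible there with inverse the convergent series $\sum_{m\ge1}(p^{-q}\varphi)^{m}$ (compare \cite[I.\ Lemme 3.30]{Ill79}); the same applies to the relevant quotients in degrees $<q$, which forces $\mathcal H^{i}=0$ for $i>q$ and leaves only $\mathcal H^{q-1}$ and $\mathcal H^{q}$. For the right-hand kernel I would use the comparison of the divided-power de Rham complex with the logarithmic Hyodo-Kato de Rham-Witt complex — annihilating $d\log T$ is precisely what turns $W_n\tilde\omega^{\bullet}_{Y/k}$ into $W_n\omega^{\bullet}_{Y/k}$ via \eqref{HK sequence}, cf.\ \cite[\S4]{HK94} and the local analysis of \cite{Kur98}, \cite{Tsu99} — to rewrite $1-\varphi_q\colon\mathcal H^{q}(\tilde{j}_n(q))\to\mathcal H^{q}(\tilde{j}_n(0))$ as the map $1-\varphi_q\colon W_n\omega^{q}_{Y/k}/dVW_{n-1}\omega^{q-1}_{Y/k}\to W_n\omega^{q}_{Y/k}/dW_n\omega^{q-1}_{Y/k}$, whose kernel is $W_n\omega^{q}_{Y/k,\log}$ by the exact sequence \eqref{log kernel} established in the Lemma above. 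Chasing a symbol $\{b_1,\dots,b_q\}$ through these identifications exhibits the induced surjection $\mathcal H^{q}(S_{n,X,\log}(q)_{\mathrm{\acute{e}t}})\twoheadrightarrow W_n\omega^{q}_{Y/k,\log}$ as the stated map $(d\log b_1\wedge\cdots\wedge d\log b_q,\,s_{\varphi,q}(\{b_1,\dots,b_q\}))\mapsto d\log\bar{b}_1\wedge\cdots\wedge d\log\bar{b}_q$; it is well defined since the symbol map onto $\mathcal H^{q}(S_{n,X,\log}(q)_{\mathrm{\acute{e}t}})$ is surjective (recalled above) and $s_{\varphi,q}$ is the explicit cocycle of \cite[Lemma 2.4.6]{Tsu99}, \cite[2.7]{Kur98}.

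The crux, and the step I expect to cost the most, is the left-hand cokernel, $\mathrm{coker}\bigl(1-\varphi_q\colon\mathcal H^{q-1}(\tilde{j}_n(q))\to\mathcal H^{q-1}(\tilde{j}_n(0))\bigr)\cong p\omega^{q-1}_{X_n/W_n(k)}/pd\omega^{q-2}_{X_n/W_n(k)}$. In cohomological degree $q-1<q$ the divided Frobenius on the Nygaard piece carries one extra factor of $p$, so there $1-\varphi_q$ is injective, its inverse again a convergent series $\psi+\psi^{2}+\cdots$ of the kind used in the proof of the Lemma above; one is then reduced to matching the degree-$(q-1)$ part of the divided-power de Rham complex of the thickening $D_n$ with the honest logarithmic de Rham complex $\omega^{q-1}_{X_n/W_n(k)}$ up to this $p$-twist. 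That matching is carried out via the exponential isomorphism $p\mathcal O_{D_n}\xrightarrow{\ \exp\ }1+p\mathcal O_{D_n}$ together with $\mathcal O_{D_{n-1}}\xrightarrow{\ p\ }p\mathcal O_{D_n}$ — both needing the hypothesis $q<p$, which bounds divided powers and makes $\exp$ and $\log$ converge — and it produces exactly the connecting map $p\omega=px\tilde\omega\mapsto\bigl(d\log(\exp(px))\,\tilde\omega,\,s_{\varphi,q}(\{\exp(px),b_2,\dots,b_q\})\bigr)$ of the statement, whose injectivity is automatic from the mapping-fibre long exact sequence. This is the semistable, $d\log T$-annihilated refinement of the local syntomic computations of Kurihara \cite[2.7]{Kur98} and Tsuji \cite[Lemma 2.4.6, Proposition 2.4.1]{Tsu99}; assembling it with the kernel computation and the high-degree vanishing produces the four-term exact sequence, hence the asserted short exact sequence together with the explicit descriptions of both maps.
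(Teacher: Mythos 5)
Your framework---running the long exact sequence of the mapping fibre of $1-p^{-q}\varphi$ and reducing to the kernel of $1-\varphi_{q}$ on $\mathcal{H}^{q}$ and the cokernel on $\mathcal{H}^{q-1}$---is a legitimate alternative to what the paper does, and your treatment of the kernel (identifying it with $W_{n}\omega^{q}_{Y/k,\log}$ via the sequence \eqref{log kernel} from the preceding Lemma, after comparing the quotient of the divided-power de Rham complex by $\langle d\log T\rangle$ with $W_{n}\omega^{\bullet}_{Y/k}$) is plausible, though the comparison of $\tilde{j}_{n}(q)$ with the Nygaard complex $N^{q}W_{n}\omega^{\bullet}_{Y/k}$ is a nontrivial input you only assert. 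The paper instead works entirely with symbols: it uses surjectivity of the symbol map onto $\mathcal{H}^{q}(S_{n,X,\log}(q)_{\mathrm{\acute{e}t}})$ to show the kernel of the surjection onto $W_{n}\omega^{q}_{Y/k,\log}$ is generated by classes with first component $d\log(1+px)\wedge d\log b_{2}\wedge\cdots$, i.e.\ by the image of $p\omega^{q-1}_{X_{n}/W_{n}(k)}$, and then determines that image's kernel by explicit computation of $s_{\varphi,2}$.

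The genuine gap is in your ``crux'' step, and it is twofold. First, your description of the cokernel computation is internally inconsistent as written ($1-\varphi_{q}$ cannot be ``injective with inverse $\psi+\psi^{2}+\cdots$'' and simultaneously have the nonzero cokernel $p\omega^{q-1}_{X_{n}/W_{n}(k)}/pd\omega^{q-2}_{X_{n}/W_{n}(k)}$); what you offer in its place---``matching the divided-power de Rham complex with $\omega^{q-1}_{X_{n}/W_{n}(k)}$ up to a $p$-twist via $\exp$''---is a restatement of the desired answer, not an argument. Second, and more importantly, you never engage with why the denominator is $pd\omega^{q-2}_{X_{n}/W_{n}(k)}$ rather than $p^{2}d\omega^{q-2}_{X_{n}/W_{n}(k)}$. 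For the un-quotiented complex ${s^{\log}_{n,X}}'(q)$ the paper shows the kernel is exactly $p^{2}d\omega^{q-2}$ (via the computation that $s_{\varphi,2}(\{\exp(pb_{1}),b_{1}\})$ is exact if and only if $b_{1}\in pR$); the extra collapse from $p^{2}d\omega^{q-2}$ to $pd\omega^{q-2}$ is precisely the new phenomenon created by dividing out $\langle d\log T\rangle$, and the paper establishes it by the explicit local computation that $s_{\varphi,2}(\{1-TX,X\})=-p^{-1}\log\bigl(\tfrac{1-(TX)^{p}}{(1-TX)^{p}}\bigr)\tfrac{dX}{X}$ equals $du$ with $u=\sum_{(i,p)=1}i^{-2}(TX)^{i}$, using that $dT=Td\log T$ vanishes in the quotient. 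Citing Kurihara and Tsuji does not cover this: their computations concern the un-quotiented complex and would deliver the $p^{2}d\omega^{q-2}$ answer. Without an explicit argument for this extra vanishing, your proposal does not prove the stated sequence.
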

\begin{proof}
We recall that there is an isomorphism between the cohomology of the original log-syntomic complex of Kato and Tsuji and the sheaf of $p$-adic vanishing cycles:
\begin{equation*}
\mathcal{H}^{q}({s^{\log}_{m,X}}'(q))\cong i^{\ast}R^{q}j_{\ast}\mathbb{Z}/p^{m}(q)
\end{equation*}
for $q<p$ (see \cite[Theorem 3.2.2]{Tsu99}) and the sheaf $i^{\ast}R^{q}j_{\ast}\mathbb{Z}/p^{m}(q)$ is generated by symbols, that is the map 
\begin{equation*}
i^{\ast}j_{\ast}\mathcal{O}_{X_{K}}^{\times}\otimes\cdots\otimes i^{\ast}j_{\ast}\mathcal{O}_{X_{K}}^{\times}\rightarrow i^{\ast}R^{q}j_{\ast}\mathbb{Z}/p^{m}(q)
\end{equation*}
defined by taking the cup-product of the boundary map
\begin{equation*}
i^{\ast}j_{\ast}\mathcal{O}_{X_{K}}^{\times}\rightarrow i^{\ast}R^{1}j_{\ast}\mathbb{Z}/p^{m}(1)
\end{equation*}
arising from the Kummer sequence, is surjective. Moreover, $i^{\ast}R^{q}j_{\ast}\mathbb{Z}/p^{m}(q)$ is equipped with a filtration $U^{0}\supset U^{1}\supset\ldots$ such that $U^{0}/U^{1}$ is isomorphic to $W_{m}\tilde{\omega}^{q}_{Y/k,\log}$, the sheaf defined in \eqref{HK sequence} and Lemma \ref{Frobenius invariants Nygaard} (see \cite[Theorem 1.6]{Hyo88}), and $U^{1}$ is generated by symbols $\{i^{\ast}(1+pz),x_{2},\ldots, x_{q}\}$ with $z\in\mathcal{O}_{X}$ and $x_{i}\in i^{\ast}j_{\ast}\mathcal{O}_{X_{K}}^{\times}$, $i=2,\ldots, q$. This shows that the kernel of 
\begin{equation*}
\mathcal{H}^{q}({s_{m,X}^{\log}}'(q)_{\mathrm{\acute{e}t}}))\rightarrow W_{m}\omega^{q}_{Y/k,\log}
\end{equation*}
consists of classes of elements where the first component is of the form
\begin{equation*}
d\log(1+px)\wedge d\log b_{2}\wedge\cdots d\log b_{q}
\end{equation*}
with $x\in\mathcal{O}_{D_{n}}$ and $b_{i}\in M_{D_{n}}^{\mathrm{gp}}$, $i=2,\ldots, q$. This element is the image of $\log(1+p\hat{x})\wedge d\log\hat{b}_{2}\wedge\cdots\wedge d\log\hat{b}_{q}$, (where $\hat{x}$ is the image of $x$ in $\mathcal{O}_{X_{m}}$ and $\hat{b}_{i}$ is the image of $b_{i}$ in $M_{X_{m}}^{\mathrm{gp}}$), which is an element of $p\omega_{X_{m}/W_{m}(k)}^{q-1}$.

We show that the kernel of $p\omega_{X_{m}/W_{m}(k)}^{q-1}\rightarrow\mathcal{H}^{q}({s_{m,X}^{\log}}'(q)_{\mathrm{\acute{e}t}})$ contains $p^{2}d\omega_{X_{m}/W_{m}(k)}^{q-2}$. If $p\tilde{\omega}\in\omega^{q-1}_{D_{m}/W_{m}(k)}$ is a lifting of $p\omega$, then a necessary condition for the image of $p\omega$ to vanish is that $p\tilde{\omega}$ is closed. If $p\tilde{\omega}=pd\log b_{1}\wedge\cdots\wedge d\log b_{q-1}$ then
\begin{align*}
s_{\varphi,q}(\{\exp(p),b_{1},\ldots,b_{q-1}\})
& =\left(\frac{\varphi}{p^{q}}-1\right)p\tilde{\omega} \\
& = \frac{\varphi(p)}{p}\cdot\frac{\varphi}{p^{q-1}}(d\log b_{1}\wedge\cdots\wedge d\log b_{q-1})-pd\log b_{1}\wedge\cdots\wedge d\log b_{q-1} \\
& = (1-p)(d\log b_{1}\wedge\cdots\wedge d\log b_{q-1}) \text{ modulo an exact form}
\end{align*}
hence is not exact. The same argument holds for any other multiple $c\tilde{\omega}$, $c\in W_{m}(k)$. Hence for $p\tilde{\omega}$ to vanish in $\mathcal{H}^{q}({s_{m,X}^{\log}}'(q)_{\mathrm{\acute{e}t}})$ it is necessary that 
\begin{equation*}
p\tilde{\omega}=pdz=pdb_{1}\wedge\frac{db_{2}}{b_{2}}\wedge\cdots\wedge\frac{db_{q-1}}{b_{q-1}}=pdb_{1}\wedge d\log b_{2}\wedge\cdots\wedge d\log b_{q-1}\,.
\end{equation*}
The second component of the image of $p\omega$ is then $s_{\varphi,q}(\{\exp(pb_{1}),b_{1},\ldots,b_{q-1}\})$. In order to decide whether it is a boundary of an element in $\mathcal{O}_{D_{m}}\otimes_{\mathcal{O}_{Z_{m}}}\omega_{Z_{m}/W_{m}(k)}^{q-2}$ it suffices to consider the case $q=2$ (the proof shows that the general case follows from this using the formula for $s_{\varphi,q}$ in \cite{Kur98}). Then
\begin{align*}
s_{\varphi,2}(\{\exp(pb_{1}),b_{1}\})
& =\frac{1}{p}\log\left(\frac{\exp(\varphi(pb_{1}))}{\exp(p^{2}b_{1})}\right)\left(\frac{1}{p}d\log\varphi(b_{1})\right)-\frac{1}{p}\log\frac{\varphi(b_{1})}{b_{1}^{p}}d(pb_{1}) \\
& =(\varphi(b_{1})-pb_{1})\frac{1}{p}d\log\varphi(b_{1})-\frac{1}{p}\log\frac{\varphi(b_{1})}{b_{1}^{p}}d(pb_{1}) \\
& =\frac{1}{p}d\varphi(b_{1})-b_{1}d\log\varphi(b_{1})-\log\frac{\varphi(b_{1})}{b_{1}^{p}}db_{1} \\
\end{align*}
Let $\varphi(b_{1})=b_{1}^{p}+px$. Then the above continues as
\begin{align*}
s_{\varphi,2}(\{\exp(pb_{1}),b_{1}\})
& = \frac{1}{p}d\varphi(b_{1})-b_{1}d\log b_{1}^{p}\left(1+\frac{px}{b_{1}^{p}}\right)-\log\left(1+\frac{px}{b_{1}^{p}}\right)db_{1} \\
& = \frac{1}{p}d\varphi(b_{1})-pdb_{1}-b_{1}d\log\left(1+\frac{px}{b_{1}^{p}}\right)-\log\left(1+\frac{px}{b_{1}^{p}}\right)db_{1} \\
& = \frac{1}{p}d\varphi(b_{1})-pdb_{1}-d\left(b_{1}\log\left(1+\frac{px}{b_{1}^{p}}\right)\right) \\
& = \frac{1}{p}d\varphi(b_{1}) \text{ modulo an exact form}\,.
\end{align*}
Therefore $s_{\varphi,2}(\{\exp(pb_{1}),b_{1}\})$ is exact if $b_{1}=pb_{1}'$ for some $b_{1}'$, which gives $p\tilde{\omega}=p^{2}dz'$, hence $p\omega=p^{2}d\hat{\omega}$ for $\hat{\omega}\in\omega^{q-2}_{Z_{m}/W_{m}(k)}$.

We have shown that $s_{\varphi,2}(\{\exp(pb_{1}),b_{1}\})$ vanishes in $\mathcal{H}^{q}({s_{m,X}^{\log}}'(q))$ if and only if $b_{1}=pb'_{1}$, yielding an injection $p\omega_{X_{m}/W_{m}(k)}^{q-1}/p^{2}d\omega_{X_{m}/W_{m}(k)}^{q-2}\hookrightarrow\mathcal{H}^{q}({s_{m,X}^{\log}}'(q))$ in analogy to the good reduction case considered in \cite{BEK14}. This completes the proof of Proposition \ref{ses prop}.
\end{proof}

We define ${s_{\bigcdot,X}^{\log}}'(n):=\tau_{\leq n}R\epsilon_{\ast}{s_{\bigcdot,X}^{\log}}'(n)_{\mathrm{\acute{e}t}}$ where $\epsilon:(X_{\bigcdot})_{\mathrm{\acute{e}t}}\rightarrow (X_{\bigcdot})_{\mathrm{Zar}}$ is the morphism of sites.
\begin{defn}
Since ${s_{\bigcdot,X}^{\log}}'(n)$ is acyclic in degrees $>n$, we can define the log-motivic pro-complex $\mathbb{Z}_{\log,X_{\bigcdot}}(n)$  in $D_{\mathrm{pro}}(Y_{\mathrm{Zar}})$ via the homotopy cartesian diagram
\begin{equation*}
\begin{tikzpicture}[descr/.style={fill=white,inner sep=1.5pt}]
        \matrix (m) [
            matrix of math nodes,
            row sep=2.5em,
            column sep=1.5em,
            text height=1.5ex, text depth=0.25ex
        ]
        { \mathbb{Z}_{\log,X_{\bigcdot}}(n) & \ & \mathbb{Z}_{\log,Y}(n) \\
        {s_{\bigcdot,X}^{\log}}'(n) & \mathcal{H}^{n}({s_{\bigcdot,X}^{\log}}'(n))[-n] & W_{\bigcdot}\tilde{\omega}_{Y/k,\log}^{n}[-n] \\ };

        \path[overlay,->, font=\scriptsize] 
        
        (m-1-1) edge (m-1-3)
        (m-2-1) edge (m-2-2) 
        (m-1-1) edge (m-2-1)
        (m-1-3) edge node[right]{$d\log$}(m-2-3)
        (m-2-2) edge (m-2-3);
                                        
\end{tikzpicture}
\end{equation*}
where ``$d\log$'' is defined using that $\mathbb{Z}_{\log,Y}(n)$ is acyclic in degrees $>n$ by definition, the map $\lambda:\mathcal{H}^{n}(\mathbb{Z}_{\log,Y}(n))\rightarrow\mathcal{K}_{\log,Y,n}^{\mathrm{Mil}}$, and the map $d\log$ defined after Proposition \ref{theta sequence}.
\end{defn}

We do not quite have a semistable analogue of the fundamental triangle in \cite[Theorem 5.4]{BEK14}. In any case, we have such a triangle by considering $R\Gamma(X,{s_{\bigcdot,X}^{\log}}'(n)_{\mathbb{Q}}(n))$, namely we have an exact triangle \\
\begin{equation}\label{log triangle}
R\Gamma(X,\omega^{<n}_{X_{\bigcdot}/W(k)}\otimes\mathbb{Q}[-1])\rightarrow R\Gamma(X,{s_{\bigcdot,X}^{\log}}'(n)_{\mathbb{Q}})\rightarrow R\Gamma(Y,W_{\bigcdot}\tilde{\omega}_{Y/k,\log}^{n}[-n]\otimes\mathbb{Q})\xrightarrow{+1}
\end{equation}
which is sufficient for proving our main result Theorem \ref{main theorem}. The point is that we have a corresponding triangle for the Nekov\'{a}\v{r}-Nizio\l \ complex $R\Gamma(X,\mathfrak{S}_{\log,X_{\bigcdot}}(n))$.

As in the smooth case we have
\begin{lemma}\label{cohomological support}
The log-motivic pro-complex $\mathbb{Z}_{\log,X_{\bigcdot}}(n)$ is acyclic in degrees $>n$.
\end{lemma}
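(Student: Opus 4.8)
The plan is to read the statement off the long exact sequence of cohomology pro-sheaves attached to the homotopy cartesian square that defines $\mathbb{Z}_{\log,X_{\bigcdot}}(r)$. That square yields a Mayer--Vietoris exact triangle
\[
\mathbb{Z}_{\log,X_{\bigcdot}}(r)\longrightarrow\mathbb{Z}_{\log,Y}(r)\oplus S_{\bigcdot,X,\log}(r)\longrightarrow W_{\bigcdot}\omega^{r}_{Y/k,\log}[-r]\xrightarrow{+1}
\]
in $D_{\mathrm{pro}}(Y_{\mathrm{Zar}})$, in which the right-hand arrow is the difference of the map $d\log$ on the summand $\mathbb{Z}_{\log,Y}(r)$ and the composite $S_{\bigcdot,X,\log}(r)\to\mathcal{H}^{r}(S_{\bigcdot,X,\log}(r))[-r]\to W_{\bigcdot}\omega^{r}_{Y/k,\log}[-r]$ on the summand $S_{\bigcdot,X,\log}(r)$. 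First I would write down the associated long exact sequence of cohomology pro-sheaves.

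Then I would plug in the three acyclicity facts already available: $\mathbb{Z}_{\log,Y}(r)$ is acyclic in degrees $>r$ by the construction $C_{\ast}(-)[-r]$; $S_{\bigcdot,X,\log}(r)=\tau_{\leq r}R\epsilon_{\ast}S_{\bigcdot,X,\log}(r)_{\mathrm{\acute{e}t}}$ is acyclic in degrees $>r$ by the truncation; and $W_{\bigcdot}\omega^{r}_{Y/k,\log}[-r]$ is concentrated in degree $r$. In degrees $i>r+1$ the pro-sheaf $\mathcal{H}^{i}(\mathbb{Z}_{\log,X_{\bigcdot}}(r))$ then sits in the long exact sequence between $\mathcal{H}^{i-1}(W_{\bigcdot}\omega^{r}_{Y/k,\log}[-r])=0$ (as $i-1>r$) and $\mathcal{H}^{i}(\mathbb{Z}_{\log,Y}(r))\oplus\mathcal{H}^{i}(S_{\bigcdot,X,\log}(r))=0$, so it vanishes. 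For the critical degree $i=r+1$, the term $\mathcal{H}^{r+1}(\mathbb{Z}_{\log,Y}(r))\oplus\mathcal{H}^{r+1}(S_{\bigcdot,X,\log}(r))$ vanishes, so $\mathcal{H}^{r+1}(\mathbb{Z}_{\log,X_{\bigcdot}}(r))$ is identified with the cokernel of the map $\mathcal{H}^{r}(\mathbb{Z}_{\log,Y}(r))\oplus\mathcal{H}^{r}(S_{\bigcdot,X,\log}(r))\to W_{\bigcdot}\omega^{r}_{Y/k,\log}$, and it remains to see that this map is surjective.

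For that, I would observe that it is enough to know that the $d\log$ component $\mathcal{H}^{r}(\mathbb{Z}_{\log,Y}(r))\to W_{\bigcdot}\omega^{r}_{Y/k,\log}$ is already surjective. By Theorem \ref{K-theory and motivic} one identifies $\mathcal{H}^{r}(\mathbb{Z}_{\log,Y}(r))\simeq\mathcal{K}^{\mathrm{Mil}}_{\log,Y,r}$, under which this map is the symbol map $\{a_{1},\dots,a_{r}\}\mapsto d\log a_{1}\wedge\cdots\wedge d\log a_{r}$; at each finite level $n$ this surjects onto $W_{n}\omega^{r}_{Y/k,\log}$, since $W_{n}\omega^{r}_{Y/k,\log}$ is by definition generated \'etale locally by such $d\log$ symbols --- and indeed Proposition \ref{mod-p K-theory} records the isomorphism $\mathcal{K}^{\mathrm{Mil}}_{\log,Y,r}/p^{n}\xrightarrow{\sim}W_{n}\omega^{r}_{Y/k,\log}$. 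Surjectivity at each finite level yields surjectivity of the map of pro-sheaves, whence $\mathcal{H}^{r+1}(\mathbb{Z}_{\log,X_{\bigcdot}}(r))=0$ and the lemma follows.

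The main obstacle is less a difficulty than a point of bookkeeping: one must make sure that forming cohomology pro-sheaves interacts cleanly with the $\tau_{\leq r}R\epsilon_{\ast}$ built into $S_{\bigcdot,X,\log}(r)$, and that both the vanishings and the surjectivity above may be tested at each finite level $n$, so that no genuine pro-categorical subtlety intervenes. This is the exact semistable analogue of the smooth-reduction statement in \cite[\S7]{BEK14}, and it is proved in the same way.
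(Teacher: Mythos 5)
Your argument is correct and follows the paper's proof essentially verbatim: the same Mayer--Vietoris long exact sequence attached to the defining homotopy cartesian square, the same acyclicity inputs for $\mathbb{Z}_{\log,Y}(r)$ and $S_{\bigcdot,X,\log}(r)$, and the reduction to surjectivity onto $W_{\bigcdot}\omega^{r}_{Y/k,\log}$ in degree $r$. The only (immaterial) difference is that the paper gets that surjectivity from the syntomic component $\mathcal{H}^{r}(S_{\bigcdot,X,\log}(r))\twoheadrightarrow W_{\bigcdot}\omega^{r}_{Y/k,\log}$ of Proposition \ref{ses prop}, whereas you use the $d\log$ component on the Milnor $K$-sheaf side via Theorem \ref{K-theory and motivic} and Proposition \ref{mod-p K-theory}; either suffices, since the boundary map out of the direct sum is onto as soon as one summand surjects.
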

\begin{proof}
Note that ${s_{\bigcdot,X}^{\log}}'(n)_{\mathbb{Q}}(n)$ and $\mathbb{Z}_{\log,Y}(n)$ are acyclic in degrees $>n$. By the definition of $\mathbb{Z}_{\log,X_{\bigcdot}}(n)$ we have a long exact sequence
\begin{equation*}
\cdots\rightarrow\mathcal{H}^{i}(\mathbb{Z}_{\log,X_{\bigcdot}}(n))\rightarrow\mathcal{H}^{i}({s_{\bigcdot,X}^{\log}}'(n))\oplus\mathcal{H}^{i}(\mathbb{Z}_{\log,Y}(n))\rightarrow \mathcal{H}^{i}(W_{\bigcdot}\tilde{\omega}_{Y/k,\log}^{n}[-n])\rightarrow\cdots\,.
\end{equation*}
Since $\mathcal{H}^{n}({s_{\bigcdot,X}^{\log}}'(n)_{\mathbb{Q}})\rightarrow W_{\bigcdot}\tilde{\omega}^{n}_{Y/k,\log}$ is surjective, $\mathbb{Z}_{\log,X_{\bigcdot}}(n)$ is acyclic in degrees $>n$.
\end{proof}

\begin{prop}\label{pro-motivic k-theory}
Suppose that $k$ is infinite. For each $n\geq 0$ (with $n<p$) there is a canonical isomorphism \begin{equation*}
\mathcal{H}^{n}(\mathbb{Z}_{\log,X_{\bigcdot}}(n))\simeq\mathcal{K}_{\log,X_{\bigcdot},n}^{\mathrm{Mil}}
\end{equation*}
in $\mathrm{Sh}_{\mathrm{pro}}(Y_{\mathrm{Zar}})$. 
\end{prop}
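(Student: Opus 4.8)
The plan is to identify both sides with the same fibre product of pro-sheaves over $\mathcal{K}_{\log,Y,r}^{\mathrm{Mil}}$, using the log-syntomic regulator \eqref{log-syntomic regulator} as the comparison map. First I would apply $\mathcal{H}^{\bullet}$ to the homotopy cartesian square defining $\mathbb{Z}_{\log,X_{\bigcdot}}(r)$. Since $S_{\bigcdot,X,\log}(r)$, $\mathbb{Z}_{\log,Y}(r)$ and $W_{\bigcdot}\omega^{r}_{Y/k,\log}[-r]$ are acyclic in degrees $>r$, and so is $\mathbb{Z}_{\log,X_{\bigcdot}}(r)$ by Lemma \ref{cohomological support}, the Mayer--Vietoris sequence of the square collapses in degree $r$ to a short exact sequence of pro-sheaves
\begin{equation*}
0\rightarrow\mathcal{H}^{r}(\mathbb{Z}_{\log,X_{\bigcdot}}(r))\rightarrow\mathcal{K}_{\log,Y,r}^{\mathrm{Mil}}\oplus\mathcal{H}^{r}(S_{\bigcdot,X,\log}(r))\rightarrow W_{\bigcdot}\omega^{r}_{Y/k,\log}\rightarrow 0\,,
\end{equation*}
where we use Theorem \ref{K-theory and motivic} to write $\mathcal{H}^{r}(\mathbb{Z}_{\log,Y}(r))=\mathcal{K}_{\log,Y,r}^{\mathrm{Mil}}$ and Proposition \ref{ses prop} for the surjectivity of $\mathcal{H}^{r}(S_{\bigcdot,X,\log}(r))\twoheadrightarrow W_{\bigcdot}\omega^{r}_{Y/k,\log}$. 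Thus $\mathcal{H}^{r}(\mathbb{Z}_{\log,X_{\bigcdot}}(r))=\mathcal{K}_{\log,Y,r}^{\mathrm{Mil}}\times_{W_{\bigcdot}\omega^{r}_{Y/k,\log}}\mathcal{H}^{r}(S_{\bigcdot,X,\log}(r))$, and its kernel over $\mathcal{K}_{\log,Y,r}^{\mathrm{Mil}}$ equals $\ker\big(\mathcal{H}^{r}(S_{\bigcdot,X,\log}(r))\rightarrow W_{\bigcdot}\omega^{r}_{Y/k,\log}\big)=p\omega^{r-1}_{X_{\bigcdot}/W(k)}/pd\omega^{r-2}_{X_{\bigcdot}/W(k)}$ by Proposition \ref{ses prop} (applied to the Zariski truncated complex $S_{\bigcdot,X,\log}(r)$, which carries the same top cohomology sheaf, exactly as in \cite{BEK14}). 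Hence there is an exact sequence of pro-sheaves
\begin{equation*}
0\rightarrow p\omega^{r-1}_{X_{\bigcdot}/W(k)}/pd\omega^{r-2}_{X_{\bigcdot}/W(k)}\rightarrow\mathcal{H}^{r}(\mathbb{Z}_{\log,X_{\bigcdot}}(r))\rightarrow\mathcal{K}_{\log,Y,r}^{\mathrm{Mil}}\rightarrow 0\,.
\end{equation*}

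Next I would construct the comparison map. The log-syntomic regulator \eqref{log-syntomic regulator}, together with the transition maps of the pro-system, gives a morphism $\mathcal{K}_{\log,X_{\bigcdot},r}^{\mathrm{Mil}}\rightarrow\mathcal{H}^{r}(S_{\bigcdot,X,\log}(r))$; comparing the formulas of Proposition \ref{ses prop} with the $d\log$-description of $W_{\bigcdot}\omega^{r}_{Y/k,\log}$ shows that its composite with $\mathcal{H}^{r}(S_{\bigcdot,X,\log}(r))\twoheadrightarrow W_{\bigcdot}\omega^{r}_{Y/k,\log}$ agrees with $\mathcal{K}_{\log,X_{\bigcdot},r}^{\mathrm{Mil}}\rightarrow\mathcal{K}_{\log,Y,r}^{\mathrm{Mil}}\xrightarrow{d\log}W_{\bigcdot}\omega^{r}_{Y/k,\log}$ (both send $\{b_{1},\ldots,b_{r}\}$ to $d\log\bar b_{1}\wedge\cdots\wedge d\log\bar b_{r}$). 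Therefore it factors through the fibre product and yields a map $\phi\colon\mathcal{K}_{\log,X_{\bigcdot},r}^{\mathrm{Mil}}\rightarrow\mathcal{H}^{r}(\mathbb{Z}_{\log,X_{\bigcdot}}(r))$ lying over the identity of $\mathcal{K}_{\log,Y,r}^{\mathrm{Mil}}$. Since $\mathcal{K}_{\log,X_{\bigcdot},r}^{\mathrm{Mil}}\rightarrow\mathcal{K}_{\log,Y,r}^{\mathrm{Mil}}$ is surjective — each argument of a symbol lifts along $M_{X_{n}}^{\mathrm{gp}}\twoheadrightarrow M_{Y}^{\mathrm{gp}}$ and $\mathcal{O}_{X_{n}}^{\times}\twoheadrightarrow\mathcal{O}_{Y}^{\times}$ — the map $\phi$ fits into a morphism of short exact sequences with the same quotient $\mathcal{K}_{\log,Y,r}^{\mathrm{Mil}}$, and by the short five lemma it suffices to show that $\phi$ induces an isomorphism of pro-sheaves from the relative log-Milnor $K$-pro-sheaf $\mathcal{R}_{r}:=\ker\big(\mathcal{K}_{\log,X_{\bigcdot},r}^{\mathrm{Mil}}\rightarrow\mathcal{K}_{\log,Y,r}^{\mathrm{Mil}}\big)$ onto $p\omega^{r-1}_{X_{\bigcdot}/W(k)}/pd\omega^{r-2}_{X_{\bigcdot}/W(k)}$.

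This last point is where the real work lies, and I expect it to be the main obstacle. Surjectivity is easy: working locally as in Remark \ref{remark}, $\omega^{1}_{X_{n}/W_{n}(k)}$ is generated by logarithmic differentials $d\log b$ with $b\in M_{X_{n}}^{\mathrm{gp}}$, so a local section of $p\omega^{r-1}_{X_{n}/W_{n}(k)}$ is a sum of terms $pg\,d\log b_{2}\wedge\cdots\wedge d\log b_{r}$; since $r<p$ the exponential converges on $p\mathcal{O}_{X_{n}}$, and the relative symbol $\{\exp(pg),b_{2},\ldots,b_{r}\}$ maps onto the class of that term by the explicit description of the first map in Proposition \ref{ses prop}. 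Injectivity is the statement that if $\sum_{i}pg_{i}\,d\log b_{i,2}\wedge\cdots\wedge d\log b_{i,r}$ lies in $pd\omega^{r-2}_{X_{n}/W_{n}(k)}$ then $\sum_{i}\{\exp(pg_{i}),b_{i,2},\ldots,b_{i,r}\}$ dies in $\mathcal{K}_{\log,X_{m},r}^{\mathrm{Mil}}$ for $m\gg 0$ — equivalently, that the logarithmic analogue of Kurihara's exponential homomorphism \cite{Kur98} is well defined and is a two-sided inverse of the map above. I would prove this by adapting the good-reduction argument of \cite{BEK14}: filter $\mathcal{R}_{r}$ by the sub-pro-sheaves generated by symbols $\{1+p^{j}x,b_{2},\ldots,b_{r}\}$, identify the graded pieces with spaces of logarithmic differential forms via $d\log$, and verify the Steinberg and exact-form relations using the formulas for $s_{\varphi,r}$ recorded in \cite{Tsu99} and \cite{Kur98}; the genuinely new ingredient is the presence of the logarithmic parameters $\pi_{i}$, handled via $d\log\pi_{i}\wedge d\log\pi_{i}=0$ together with the local presentation of $M_{X_{n}}$ in Remark \ref{remark}. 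Granting this, the left vertical arrow in the five-lemma diagram is an isomorphism of pro-sheaves, and hence so is $\phi$.
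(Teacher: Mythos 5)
Your proposal follows essentially the same route as the paper: both reduce, via the homotopy cartesian square and Proposition \ref{ses prop}, to comparing the two short exact sequences over $\mathcal{K}_{\log,Y,r}^{\mathrm{Mil}}$ and then showing that the induced map between the kernels $U^{1}\mathcal{K}^{\mathrm{Mil}}_{\log,X_{\bigcdot},r}\rightarrow p\omega^{r-1}_{X_{\bigcdot}/W(k)}/pd\omega^{r-2}_{X_{\bigcdot}/W(k)}$ is an isomorphism, which the paper does exactly as you sketch — by exhibiting it as the inverse of Kurihara's exponential map and running the $U^{\bullet}$-filtration/inverse-Cartier argument on graded pieces adapted from \cite{BEK14} and \cite{Kur88}. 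The step you flag as the main obstacle is indeed where the paper's proof does its real work, and your plan for it matches the paper's execution.
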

\begin{proof}
The exact sequences
\begin{equation*}
0\rightarrow p\omega_{X_{\bigcdot}/W(k)}^{n-1}/p^{2}d\omega_{X_{\bigcdot}/W(k)}^{n-2}\rightarrow\mathcal{H}^{n}({s_{\bigcdot,X}^{\log}}'(n))\rightarrow W_{\bigcdot}\tilde{\omega}_{Y/k,\log}^{n}\rightarrow 0
\end{equation*}
and 
\begin{equation*}
0\rightarrow\mathcal{H}^{n}(\mathbb{Z}_{\log,X_{\bigcdot}}(n))\rightarrow\mathcal{H}^{n}({s_{\bigcdot,X}^{\log}}'(n))\oplus\mathcal{H}^{n}(\mathbb{Z}_{\log,Y}(n))\rightarrow W_{\bigcdot}\tilde{\omega}_{Y/k,\log}^{n}\rightarrow 0
\end{equation*}
induce the exact sequence at the bottom of the following commutative diagram
\begin{equation*}
\begin{tikzpicture}[descr/.style={fill=white,inner sep=1.5pt}]
        \matrix (m) [
            matrix of math nodes,
            row sep=2.5em,
            column sep=1.3em,
            text height=1.5ex, text depth=0.25ex
        ]
        { 0 & U^{1}\mathcal{K}^{\mathrm{Mil}}_{\log, X_{\bigcdot},n} & \mathcal{K}_{\log,X_{\bigcdot},n}^{\mathrm{Mil}} & \mathcal{K}_{\log,Y,n}^{\mathrm{Mil}} & 0 \\
        0 & p\omega_{X_{\bigcdot}/W(k)}^{n-1}/p^{2}d\omega_{X_{\bigcdot}/W(k)}^{n-2} & \mathcal{H}^{n}(\mathbb{Z}_{\log,X_{\bigcdot}}(n)) & \mathcal{H}^{n}(\mathbb{Z}_{\log,Y}(n)) & 0 \\ };

        \path[overlay,->, font=\scriptsize] 
        (m-1-1) edge (m-1-2)
        (m-2-1) edge (m-2-2)
        (m-1-2) edge (m-1-3)
        (m-2-2) edge (m-2-3)
        (m-2-3) edge (m-2-4) 
        (m-1-2) edge node[right]{$(\ast)$}(m-2-2)
        (m-1-3) edge node[right]{$R_{\mathrm{log-syn}}$}(m-2-3)
        (m-1-3) edge (m-1-4)
        (m-1-4) edge (m-1-5)
        (m-2-4) edge (m-2-5)
        (m-1-4) edge node[right]{$\wr$}(m-2-4)
        ;
                                        
\end{tikzpicture}
\end{equation*}
The right vertical map is the isomorphism in Proposition \ref{K-theory and motivic} and the map $R_{\mathrm{log-syn}}$ is induced by the log-syntomic regulator \eqref{log-syntomic regulator}. We shall show the map $(\ast)$ is an isomorphism. Note that $(\ast)$, which is the restriction of $R_{\mathrm{log-syn}}$, turns out to be the inverse of the exponential map
\begin{equation}\label{Exp}
\mathrm{Exp}:\frac{p\omega^{n-1}_{R_{m}/W_{m}(k)}}{p^{2}d\omega^{n-2}_{R_{m}/W_{m}(k)}}\rightarrow U^{1}\mathcal{K}_{\log, n}^{\mathrm{Mil}}(R_{m})
\end{equation}
induced by $pad\log b_{1}\wedge\cdots\wedge d\log b_{r-1}\mapsto\{\exp(pa),b_{1},\ldots, b_{r-1}\}$, where $R_{m}$ is a local ring on the syntomic scheme $X_{m}/W_{m}(k)$ which is flat. Indeed, the following facts cited in \cite[\S12]{BEK14} also hold for the ring $R_{m}$:
\begin{itemize}
\item[-] $\mathcal{K}^{\mathrm{Mil}}_{\log, Y,n}$ is $p$-torsion free. Indeed, $\mathcal{K}^{\mathrm{Mil}}_{\log, Y,n}$ injects into $\mathcal{K}^{\mathrm{Mil}}_{Y^{\mathrm{sm}},n}$ which is $p$-torsion free.
\item[-] $U^{1}\mathcal{K}_{\log,n}^{\mathrm{Mil}}(R_{m})$ is $p$-primary torsion of finite exponent. The proof using pointy bracket symbols for $K_{2}(R,pR)$ passes over verbatim.
\end{itemize}
The existence of the exponential map also holds more generally for rings satisfying the assumption 2.1 in \cite{Kur98}, so we have
\begin{equation*}
\mathrm{Exp}:\frac{p\omega^{n-1}_{R_{m}/W_{m}(k)}}{p^{2}d\omega^{n-2}_{R_{m}/W_{m}(k)}}\rightarrow \left(\mathcal{K}_{\log, n}^{\mathrm{Mil}}(R_{m})\right)^{\wedge}
\end{equation*}
into the $p$-adic completion. Then steps 1 and 2 in the proof of \cite[Theorem 12.3]{BEK14} carry over to show the existence of \eqref{Exp}. Since \cite[Corollary 1.3]{Kur98} holds for more general rings including $R_{m}$, $\mathrm{Exp}$ vanishes on $p^{2}d\omega^{n-2}_{R_{m}/W_{m}(k)}$. It is clear that $(\ast)$ composed with $\mathrm{Exp}$ is the identity on $p\omega^{n-1}/p^{2}d\omega^{n-2}$, so it remains to show that $\mathrm{Exp}$ is surjective. 

Define $G_{n}=p\omega_{R_{m}/W_{m}(k)}^{n-1}/pd\omega_{R_{m}/W_{m}(k)}^{n-2}$ and define a filtration $U^{\bullet}G_{n}$ by defining $U^{i}G_{n}$ to be the image of $p^{i}\omega_{R_{m}/W_{m}(k)}^{n-1}$ in $G_{n}$. Inductively define subsheaves
\begin{equation*}
0=B_{0}\subset B_{1}\subset\cdots\subset Z_{2}\subset Z_{1}\subset Z_{0}=\omega_{R_{1}/k}^{q}
\end{equation*}
using the inverse Cartier operator $C^{-1}$ by the formulae
\begin{align*}
& B_{1}=d\omega_{R/k}^{q-1} \\
& Z_{1}=\ker\left(d:\omega_{R/k}^{q}\rightarrow\omega_{R/k}^{q+1}\right) \\
& C^{-1}:B_{s}\xrightarrow{\sim}B_{s+1}/B_{1} \\
& C^{-1}:Z_{s}\xrightarrow{\sim}Z_{s+1}/B_{1}
\end{align*}
as in \cite[(1.5)]{Hyo88}. Then the analogue of \cite[I. Proposition 2.2.8]{Ill79} holds: $B_{i}$ is locally generated by sections of the form $x_{1}^{p^{r}}d\log x_{1}\wedge\cdots\wedge d\log x_{q}$, $x_{j}\in N_{Y}$, $0\leq r\leq i-1$. Define a filtration $U^{\bullet}\mathcal{K}_{\log, n}^{\mathrm{Mil}}(R_{m})$ of $\mathcal{K}_{\log, n}^{\mathrm{Mil}}(R_{m})$ by setting $U^{i}\mathcal{K}_{\log, n}^{\mathrm{Mil}}(R_{m})$ to be the subgroup generated by symbols of the form $\{1+p^{i}x_{1},x_{2},\ldots, x_{n}\}$ where $x_{1}\in R_{m}$ and $x_{2},\ldots, x_{n}\in N_{R_{m}}$. Then $U^{1}\mathcal{K}_{\log, n}^{\mathrm{Mil}}(R_{m})=\ker\left(\mathcal{K}_{\log,n}^{\mathrm{Mil}}(R_{m})\rightarrow\mathcal{K}_{\log,n}^{\mathrm{Mil}}(R_{1})\right)$. For each $i\geq 1$, the analogue of \cite[Lemma 2.3.2]{Kur88} holds: the map 
\begin{align*}
\lambda_{i}:
& \ \omega^{n-1}_{R_{1}/k}\rightarrow\mathrm{gr}^{i}\mathcal{K}_{\log,n}^{\mathrm{Mil}}(R_{m}) \\
& ad\log b_{1}\wedge\cdots\wedge d\log b_{n-1}\mapsto \{1+p^{i}\tilde{a},\tilde{b}_{1},\ldots,\tilde{b}_{n-1}\}
\end{align*}
(where $\tilde{a}$ and the $\tilde{b}_{i}$ are liftings of $a$ and the $b_{i}$ to $R_{m}$) annihilates $B_{i-1}$, hence induces a map
\begin{equation*}
\omega^{n-1}_{R_{1}/k}/B_{i-1}\rightarrow\mathrm{gr}^{i}\mathcal{K}_{\log,n}^{\mathrm{Mil}}(R_{m})\,.
\end{equation*}

By the obvious semistable analogue of \cite[I. Corollaire 2.3.14 (b)]{Ill79} (see also \cite[(2.6)]{Hyo88}) we have an isomorphism 
\begin{equation*}
\omega^{n-1}_{R_{1}/k}/B_{i-1}\simeq\mathrm{gr}^{i}G_{n}\,.
\end{equation*}
On the other hand, consider the composite map
\begin{equation*}
\omega^{n-1}_{R_{1}/k}/B_{i-1}\xrightarrow{\lambda_{i}}\mathrm{gr}^{i}\mathcal{K}_{\log,n}^{\mathrm{Mil}}(R_{m})\rightarrow\mathrm{gr}^{i}G_{n}\xrightarrow{\sim}\omega^{n-1}_{R_{1}/k}/B_{i+1}
\end{equation*}
which coincides with the inverse Cartier operator, which is injective. The second arrow is by definition surjective. Since the first map is also surjective, the second map is an isomorphism. Hence $\mathrm{Exp}$ is an isomorphism between $ \frac{p\omega^{n-1}_{R_{m}/W_{m}(k)}}{p^{2}d\omega^{n-2}_{R_{m}/W_{m}(k)}}$ and $U^{1}\mathcal{K}_{\log, n}^{\mathrm{Mil}}(R_{m})$. This completes the proof of Proposition \ref{pro-motivic k-theory}.
\end{proof}

\begin{rem}\label{weight 1 pro remark} Without the assumption that $k$ is infinite, we should replace the logarithmic Milnor $K$-theory pro-sheaf $\mathcal{K}^{\mathrm{Mil}}_{\log,X_{\bigcdot},n}$ with the improved logarithmic Milnor $K$-theory pro-sheaf $\hat{\mathcal{K}}^{\mathrm{Mil}}_{\log,X_{\bigcdot},n}$ along the lines of Remark \ref{refined remark}(ii). With this modification, Proposition \ref{pro-motivic k-theory} also holds when $k$ is finite by the same proof. Notice that Proposition \ref{pro-motivic k-theory} holds in weight $n=1$ without modification, because $\hat{\mathcal{K}}^{\mathrm{Mil}}_{\log,X_{\bigcdot},1}=\mathcal{K}^{\mathrm{Mil}}_{\log,X_{\bigcdot},1}$.
\end{rem}

\begin{prop}\label{weight one pro-complex}
The log-motivic pro-complex of weight one $\mathbb{Z}_{\log,X_{\bigcdot}}(1)$ is quasi-isomorphic to $\underline{N}_{X_{\bigcdot}}^{\mathrm{gp}}[-1]$, hence
\begin{equation*}
\mathbb{H}^{2}_{\mathrm{cont}}(Y,\mathbb{Z}_{\log,X_{\bigcdot}}(1))\cong H^{1}_{\mathrm{Zar}}(X_{\bigcdot},\underline{N}_{X_{\bigcdot}}^{\mathrm{gp}})\,.
\end{equation*}
If $X$ is proper over $\mathrm{Spec}\,W(k)$ then  we have $\mathbb{H}^{2}_{\mathrm{cont}}(Y,\mathbb{Z}_{\log,X_{\bigcdot}}(1))\cong H^{1}_{\mathrm{Zar}}(X,\underline{N}_{X}^{\mathrm{gp}})$ where $\underline{N}_{X}^{\mathrm{gp}}$ will be defined in the proof.
\end{prop}
\begin{proof}
We have a commutative diagram with exact rows
\begin{equation*}
\begin{tikzpicture}[descr/.style={fill=white,inner sep=1.5pt}]
        \matrix (m) [
            matrix of math nodes,
            row sep=2.5em,
            column sep=1.3em,
            text height=1.5ex, text depth=0.25ex
        ]
        { 0 & 1+p\mathcal{O}_{X_{\bigcdot}}[-1] &  N_{X_{\bigcdot}}^{\mathrm{gp}}[-1] & N_{Y}^{\mathrm{gp}}[-1] & 0 \\
        0 & p\mathcal{O}_{X_{\bigcdot}}[-1] & \mathbb{Z}_{\log,X_{\bigcdot}}(1) & \mathbb{Z}_{\log,Y}(1) & 0 \\ };

        \path[overlay,->, font=\scriptsize] 
        (m-1-1) edge (m-1-2)
        (m-2-1) edge (m-2-2)
        (m-1-2) edge (m-1-3)
        (m-2-2) edge (m-2-3)
        (m-2-3) edge (m-2-4) 
        (m-1-2) edge node[right]{$\log$} node[left]{$\simeq$} (m-2-2)
        (m-1-3) edge (m-2-3)
        (m-1-3) edge (m-1-4)
        (m-1-4) edge (m-1-5)
        (m-2-4) edge (m-2-5)
        (m-1-4) edge node[right]{$\simeq$}(m-2-4)
        ;
                                        
\end{tikzpicture}
\end{equation*}
where the left vertical arrow is the $p$-adic logarithm, which is an isomorphism, and the right vertical arrow is from Proposition \ref{Z(1)}. By Lemma \ref{cohomological support} we have  $\mathcal{H}^{i}(\mathbb{Z}_{\log,X_{\bigcdot}}(1))=0$ for all $i\geq 2$. The first map $(d,\frac{\varphi}{p}-1):J_{D^{\bigcdot}_{m}}\rightarrow\tilde{\omega}^{1}_{D^{\bigcdot}_{m}/W_{m}(k)}\oplus\mathcal{O}_{D^{\bigcdot}_{m}}$ in the definition of ${s_{m,X}^{\log}}'$ is injective, so ${s_{m,X}^{\log}}'(1)$ is acyclic in degrees $\neq 1$. Since $\mathcal{H}^{0}(\mathbb{Z}_{\log,Y}(1))=0$ by Corollary \ref{corollary weight one}, we conclude from the sequence in the proof of Lemma \ref{cohomological support} that $\mathcal{H}^{0}(\mathbb{Z}_{\log,X_{\bigcdot}}(1))=0$. The middle vertical arrow is induced from the canonical map (compatible with $W_{\bigcdot}\omega^{1}_{Y,\log}[-1]$) $N_{X_{\bigcdot}}^{\mathrm{gp}}[-1]\rightarrow {s_{\bigcdot,X}^{\log}}'(1)$ (defined in the same way as for $M_{X_{\bigcdot}}^{\mathrm{gp}}[-1]$) and the reduction map $N_{X_{\bigcdot}}^{\mathrm{gp}}\rightarrow N_{Y}^{\mathrm{gp}}$. This proves the first statement of the proposition.

For the second statement, we first need to define $N_{X}^{\mathrm{gp}}$. We will do this locally, so let $U\subset X$ be an open such that $U=\mathrm{Spec}\,W(k)[T_{1},\ldots, T_{n}]/(f_{1}\cdots f_{r}-p)$ and such that the log-structure $M_{X}=j_{\ast}\mathcal{O}_{X_{K}}^{\ast}$ is associated to
\begin{align*}
& \mathbb{N}^{r}\rightarrow\mathcal{O}_{X}(U) \\
&e_{i}\mapsto f_{i}\,.
\end{align*}
Define the log-structure $N_{X}$ by 
\begin{align*}
& \mathbb{N}^{r}\rightarrow\mathcal{O}_{X}(U) \\
&e_{i}\mapsto f_{i}+\prod_{\stackrel{j=1}{j\neq i}}^{r}f_{j}\,.
\end{align*}
Since $X$ is regular, it is integral, so $\mathcal{O}_{X}(U)$ is an integral domain, and $\mathcal{O}_{X}(U)\backslash\{0\}$ is a multiplicative monoid. We obtain a homomorphism of monoids
\begin{equation*}
N_{X}(U)\rightarrow\mathcal{O}_{X}(U)\backslash\{0\}
\end{equation*}
extending to a homomorphism of abelian groups
\begin{equation*}
N_{X}^{\mathrm{gp}}(U)\rightarrow(\mathcal{O}_{X}(U)\backslash\{0\})^{\mathrm{gp}}\,.
\end{equation*}
Define $\underline{N}^{\mathrm{gp}}_{X}$ as the image of $N_{X}^{\mathrm{gp}}$ inside $(\mathcal{O}_{X}\backslash\{0\})^{\mathrm{gp}}$. Then we have the canonical reduction map for each $m$
\begin{equation*}
\underline{N}_{X}^{\mathrm{gp}}\rightarrow\underline{N}_{X_{m}}^{\mathrm{gp}}\,.
\end{equation*}
Note that $\underline{N}_{X}^{\mathrm{gp}}$ is, in general, not contained in $j_{\ast}\mathcal{O}_{X_{K}}^{\ast}$, hence is very different from $M_{X}^{\mathrm{gp}}$.

Now consider the short exact sequence associated to taking continuous cohomology of pro-sheaves:
\begin{equation*}
0\rightarrow{\varprojlim_{m}}^{1}\mathbb{H}^{1}_{\mathrm{Zar}}(Y,\mathbb{Z}_{\log,X_{m}}(1))\rightarrow\mathbb{H}_{\mathrm{cont}}^{2}(Y,\mathbb{Z}_{\log,X_{\bigcdot}}(1))\rightarrow\varprojlim_{m}\mathbb{H}^{2}_{\mathrm{Zar}}(Y,\mathbb{Z}_{\log,X_{m}}(1))\rightarrow 0\,.
\end{equation*}
By the first part of the proposition, the middle entry of the sequence is $H^{1}_{\mathrm{cont}}(Y,N_{X_{\bigcdot}}^{\mathrm{gp}})$. Applying the first part of the proposition to the first and final entries in the sequence yields $\varprojlim_{m}^{1}\mathbb{H}^{1}_{\mathrm{Zar}}(Y,\mathbb{Z}_{\log,X_{m}}(1))\simeq\varprojlim_{m}^{1}H^{0}(Y,\underline{N}_{X_{m}}^{\mathrm{gp}})=0$ (because the system $\{H^{0}(Y,\underline{N}_{X_{m}}^{\mathrm{gp}})\}_{m}$ is Mittag-Leffler), and $\varprojlim_{m}\mathbb{H}^{2}_{\mathrm{Zar}}(Y,\mathbb{Z}_{\log,X_{m}}(1))\simeq\varprojlim_{m}H^{1}_{\mathrm{Zar}}(Y,\underline{N}_{X_{m}}^{\mathrm{gp}})$. In particular, we have $H^{1}_{\mathrm{cont}}(Y,\underline{N}_{X_{\bigcdot}}^{\mathrm{gp}})\simeq\varprojlim_{m}H^{1}_{\mathrm{Zar}}(Y,\underline{N}_{X_{m}}^{\mathrm{gp}})$. Now consider the following commutative diagram with exact rows
 \begin{equation*}
\begin{tikzpicture}[descr/.style={fill=white,inner sep=1.5pt}]
        \matrix (m) [
            matrix of math nodes,
            row sep=2.5em,
            column sep=1.3em,
            text height=1.5ex, text depth=0.25ex
        ]
        { H^{0}(\underline{N}_{Y}^{\mathrm{gp}}) & H^{1}(1+p\mathcal{O}_{\widehat{X}}) & \displaystyle\varprojlim_{m}H^{1}(\underline{N}_{X_{m}}^{\mathrm{gp}}) & H^{1}(\underline{N}_{Y}^{\mathrm{gp}}) & H^{2}(1+p\mathcal{O}_{\widehat{X}}) \\
        H^{0}(\underline{N}_{Y}^{\mathrm{gp}}) & H^{1}(1+p\mathcal{O}_{X}) & H^{1}(\underline{N}_{X}^{\mathrm{gp}}) & H^{1}(\underline{N}_{Y}^{\mathrm{gp}}) & H^{2}(1+p\mathcal{O}_{X}) \\ };

        \path[overlay,->, font=\scriptsize] 
        (m-1-1) edge (m-1-2)
        (m-2-1) edge (m-2-2)
        (m-1-2) edge (m-1-3)
        (m-2-2) edge (m-2-3)
        (m-2-3) edge (m-2-4) 
        (m-2-2) edge (m-1-2)
        (m-2-3) edge (m-1-3)
        (m-1-3) edge (m-1-4)
        (m-1-4) edge (m-1-5)
        (m-2-4) edge (m-2-5)
        (m-2-1) edge node[right]{$=$}(m-1-1)
        (m-2-5) edge (m-1-5)
        (m-2-4) edge node[right]{$=$}(m-1-4)
        ;
                                        
\end{tikzpicture}
\end{equation*}
where $\widehat{X}$ is the formal completion of $X$ along the special fibre. If $X$ is proper over $\mathrm{Spec}\,W(k)$ then the second and fifth vertical arrows in the diagram are isomorphisms by formal GAGA, so the middle arrow is also an isomorphism. That is,
\begin{equation*}
\mathbb{H}_{\mathrm{cont}}^{2}(Y,\mathbb{Z}_{\log,X_{\bigcdot}}(1))\cong H^{1}_{\mathrm{cont}}(Y,\underline{N}_{X_{\bigcdot}}^{\mathrm{gp}})\cong\varprojlim_{m}H^{1}(Y,\underline{N}_{X_{m}}^{\mathrm{gp}})\cong H^{1}(X,\underline{N}_{X}^{\mathrm{gp}})\,.
\end{equation*}
\end{proof}

We now have enough to obtain our main result: a generalisation to the semistable case of ``the formal part'' of the $p$-adic variational Hodge conjecture \`{a} la \cite{BEK14}. In the following we use of the continuous cohomology of pro-complexes, see \cite{Jan88} and \cite[Appendix B]{BEK14}.

\begin{thm}\label{main theorem}
Let $n<p$. Let $X$ be a proper regular flat scheme over $\mathrm{Spec}\,W(k)$ with semistable reduction. Let $z\in\mathbb{H}_{\log-\mathcal{M}}^{2n}(Y,\mathbb{Z}(n))\otimes\mathbb{Q}$. Then its log-crystalline Chern class $c_{\mathrm{HK}}(z)\in H^{n}(Y,W_{\bigcdot}\omega_{Y/k,\log}^{n})\otimes\mathbb{Q}\rightarrow H_{\mathrm{log-cris}}^{2n}(Y/W(k))_{\mathbb{Q}}\simeq H_{\mathrm{dR}}^{2n}(X/W(k))_{\mathbb{Q}}\simeq H_{\mathrm{dR}}^{2n}(X_{K}/K)$ lies in $\mathrm{Fil}^{n}H_{\mathrm{dR}}^{2n}(X_{K}/K)$ if and only if $z$ lifts to $\hat{z}\in\mathbb{H}_{\mathrm{cont}}^{2n}(Y,\mathbb{Z}_{\log,X_{\bigcdot}}(n))\otimes\mathbb{Q}$. 
\end{thm}
\begin{proof}
Using \eqref{log triangle} one derives a commutative diagram of exact triangles
\begin{equation*}
\begin{tikzpicture}[descr/.style={fill=white,inner sep=1.5pt}]
        \matrix (m) [
            matrix of math nodes,
            row sep=2.5em,
            column sep=1.3em,
            text height=1.5ex, text depth=0.25ex
        ]
        { R\Gamma(Y,\omega_{X_{\bigcdot}/W(k)}^{<n}\otimes\mathbb{Q}[-1]) & R\Gamma(Y,\mathbb{Z}_{\log,X_{\bigcdot}}\otimes\mathbb{Q}) & R\Gamma(Y,\mathbb{Z}_{\log,Y}\otimes\mathbb{Q}) & \cdots\\
        R\Gamma(Y,\omega_{X_{\bigcdot}/W(k)}^{<n}\otimes\mathbb{Q}[-1]) & R\Gamma(Y,{s_{\bigcdot,X}^{\log}}'(n)_{\mathbb{Q}}) & R\Gamma(Y,W_{\bigcdot}\tilde{\omega}_{Y/k,\log}^{n}[-n]\otimes\mathbb{Q}) & \cdots \\  };

        \path[overlay,->, font=\scriptsize] 
        (m-1-1) edge (m-1-2)
        (m-1-2) edge (m-1-3)
        (m-1-3) edge node [above]{$+1$} (m-1-4)
        (m-2-1) edge (m-2-2)
        (m-2-2) edge (m-2-3)
        (m-2-3) edge node [above]{$+1$} (m-2-4)
        (m-1-1) edge node [right]{$=$}(m-2-1)
        (m-1-2) edge (m-2-2)
        (m-1-3) edge (m-2-3)
        ;
                                        
\end{tikzpicture}
\end{equation*}
This follows from \cite[Lemma 1.4.4]{Nee01}. From this we have the top two rows of the following commutative diagram
\begin{equation*}
\begin{tikzpicture}[descr/.style={fill=white,inner sep=1.5pt}]
        \matrix (m) [
            matrix of math nodes,
            row sep=2.5em,
            column sep=1.3em,
            text height=1.5ex, text depth=0.25ex
        ]
        { \mathbb{H}_{\mathrm{cont}}^{2n}(\mathbb{Z}_{\log,X_{\bigcdot}}(n))_{\mathbb{Q}} & \mathbb{H}^{2n}(\mathbb{Z}_{\log,Y}(n))_{\mathbb{Q}} & \mathbb{H}_{\mathrm{cont}}^{2n}(\omega^{<n}_{X_{\bigcdot}/W(k)})_{\mathbb{Q}} \\
        \mathbb{H}_{\mathrm{cont}}^{2n}({s_{\bigcdot,X}^{\log}}'(n)_{\mathbb{Q}}) & H_{\mathrm{cont}}^{n}(W_{\bigcdot}\tilde{\omega}_{Y/k,\log}^{n})_{\mathbb{Q}} & \mathbb{H}_{\mathrm{cont}}^{2n}(\omega_{X_{\bigcdot}/W(k)}^{<n})_{\mathbb{Q}} \\ 
        \ & \mathbb{H}_{\mathrm{cont}}^{2n}(W_{\bigcdot}\tilde{\omega}_{Y/k}^{\bullet})_{\mathbb{Q}} & H^{2n}_{\mathrm{dR}}(X_{K}/K)/\mathrm{Fil}^{n} \\ };

        \path[overlay,->, font=\scriptsize] 
        (m-1-1) edge (m-1-2)
        (m-1-2) edge (m-1-3)
        (m-2-1) edge (m-2-2)
        (m-2-2) edge (m-2-3)
        (m-3-2) edge (m-3-3)
        (m-1-1) edge (m-2-1)
        (m-1-2) edge (m-2-2)
        (m-1-3) edge node[right]{$=$} (m-2-3)
        (m-2-2) edge (m-3-2)
        (m-2-3) edge node[right]{$\wr$} (m-3-3)
        ;
                                        
\end{tikzpicture}
\end{equation*}
The commutativity of the right hand side is proven in the same way as \cite[Theorem 6.1]{BEK14}. We see from this diagram that $z\in H^{2n}(\mathbb{Z}_{\log,Y}(n))_{\mathbb{Q}}$ lifts to $\mathbb{H}_{\mathrm{cont}}^{2n}(\mathbb{Z}_{\log,X_{\bigcdot}}(n))_{\mathbb{Q}}$ if and only if its Chern class $c_{\mathrm{HK}}(z)$ is in $\mathrm{Fil}^{n}H^{2n}(X_{K}/K)$ under the Hyodo-Kato isomorphism.
\end{proof}
\begin{rem}\label{Yamashita remark}
Although we do not reprove Yamashita's result for the logarithmic Picard group \cite[\S3]{Yam11}, we point out that the $p$-adic deformation theory of both $H^{1}(Y,\underline{N}_{Y}^{\mathrm{gp}})$ and $\mathrm{Pic}^{\log}(Y)$ coincide. We have exact sequences 
\begin{equation*}
1\rightarrow 1+p\mathcal{O}_{X_{\bigcdot}}\rightarrow M_{X_{\bigcdot}}^{\mathrm{gp}}\rightarrow M_{Y}^{\mathrm{gp}}\rightarrow 1
\end{equation*}
and 
\begin{equation*}
1\rightarrow 1+p\mathcal{O}_{X_{\bigcdot}}\rightarrow\underline{N}_{X_{\bigcdot}}^{\mathrm{gp}}\rightarrow\underline{N}_{Y}^{\mathrm{gp}}\rightarrow 1
\end{equation*}
and hence the obstruction to lifting (rational) $H^{1}$-cohomology classes from characteristic $p$ to characteristic $0$ lies in $H^{2}(X,\mathcal{O}_{X_{\bigcdot}})\otimes\mathbb{Q}$ in both cases.
\end{rem}

\end{document}